\newtheorem{thm}{Theorem}[section]
\newtheorem{theorem}[thm]{Theorem}
\newtheorem{cor}[thm]{Corollary}
\newtheorem{lem}[thm]{Lemma}
\newtheorem{lemma}[thm]{Lemma}
\newtheorem{proposition}[thm]{Proposition}
\theoremstyle{definition}
\newtheorem{defi}[thm]{Definition}
\newtheorem{definition}[thm]{Definition}
\theoremstyle{remark}
\newtheorem{remark}[thm]{Remark}
\numberwithin{equation}{section}
\newcommand{\tensor}{\otimes}
\newcommand{\complexs}{\mathbb{C}}
\newcommand{\integers}{\mathbb{Z}}
\newcommand{\naturals}{\mathbb{N}}
\newcommand{\reals}{\mathbb{R}}
\newcommand{\innerprod}[1]{\langle#1\rangle}
\renewcommand{\epsilon}{\varepsilon}
\renewcommand{\mathcal}{\mathscr}
\begin{document}
\title{Codimension two index obstructions to\\ positive scalar curvature}

\author{
Bernhard Hanke
\phantom{X}Daniel Pape\thanks{the author was supported by the German Research Foundation (DFG) through
the Research Training Group 1493 \textquoteleft Mathematical structures in modern quantum physics\textquoteright. \texttt{www.uni-math.gwdg.de/pape}}\phantom{X}
Thomas Schick\thanks{partially funded by the Courant Research Center \textquoteleft Higher order structures in Mathematics\textquoteright\ within the German initiative of excellence. \texttt{www.uni-math.gwdg.de/schick}}\\
\small Universit\"{a}t Augsburg\\[-0.8ex]
\small Universit\"{a}tsstrasse 14, 86159 Augsburg, Germany\\
\small Georg-August-Universit\"{a}t G\"{o}ttingen\\[-0.8ex]
\small Bunsenstr. 3, 37073 G\"{o}ttingen, Germany\\
\small \texttt{hanke@math.uni-augsburg.de}\phantom{X}
\small \texttt{pape@uni-math.gwdg.de}\phantom{X}
\small \texttt{schick@uni-math.gwdg.de}
}

\date{}
\maketitle

\begin{abstract}

We derive a general obstruction to the existence of 
 Riemannian metrics of positive scalar curvature on closed spin manifolds in
 terms of 
 hypersurfaces of codimension
 two. The proof is based on coarse index
 theory for Dirac operators that are twisted with  Hilbert $C^*$-module bundles.

 Along the way we give a complete and self-contained proof that the minimal 
 closure of a Dirac type operator twisted with a 
 Hilbert $C^*$-module bundle on a 
 complete Riemannian manifold is a regular and  self-adjoint operator 
 on the Hilbert $C^*$-module of $L^2$-sections of this bundle. 
 
Moreover, we give a new proof of Roe's vanishing theorem for the
 coarse index of the Dirac operator on a complete Riemannian manifold  whose 
 scalar curvature is uniformly positive outside of  a compact subset. This proof  immediately generalizes to Dirac operators 
  twisted with Hilbert $C^*$-module bundles. 
  
\end{abstract}

\section{Introduction}

A central theme of geometric topology in recent decades asks whether a
given smooth manifold admits a Riemannian metric with positive scalar
curvature. On spin manifolds  the most powerful obstructions to existence of
such  metrics are based on index theory for the Dirac operator. Indeed the
Schr\"odinger-Lichnerowicz formula \cite{Schroedinger} implies that on a  spin manifolds  with uniformly positive scalar curvature the Dirac operator is invertible and hence its index, suitably defined if the manifold is not compact, has to vanish. 

Rosenberg \cites{Rosenberg_PSC_NC_I, Rosenberg_PSC_NC_II, Rosenberg_PSC_NC_III}
used Dirac operators twisted with flat Hilbert $C^*$-module bundles whose
indices
lie in the K-theory of $C^*$-algebras in order to obtain
particularly strong obstructions to the existence of positive scalar curvature metrics. 
In particular, using the Mishchenko bundle, the canonical flat
$C^*\pi_1(M)$-bundle on $M$, one obtains the Rosenberg 
index obstruction $\alpha(M)\in K_*(C^*\pi_1(M)))$.

 Roe \cite{Roe_AMS} 
developed coarse index theory to define meaningful indices of 
Dirac operators on non-compact complete manifolds. This can also be used
to gain interesting information for compact manifolds by passing to 
non-compact covering spaces.

Gromov and Lawson \cite[Theorem 7.5]{MR720933} found  an intriguing
obstruction to positive scalar curvature based on submanifolds of codimension two: 
if $M$ is a closed aspherical spin 
manifold with a hypersurface $N$ of codimension two with trivial normal bundle
such that $N$ is
enlargeable and $\pi_1(N)$ injects into $\pi_1(M)$, then $M$ does not admit a
Riemannian metric of positive scalar curvature. 

The main purpose of this paper is to illuminate this result 
from an index theoretic perspective. Our proof is based on coarse index theory
for Dirac operators 
twisted with Hilbert $C^*$-module bundles. This allows us to prove the following 
statement, where in particular the asphericity of $M$  in
\cite{MR720933} is weakened to the vanishing of the second homotopy group.

\begin{theorem}\label{theo:codim2}
  Let $M$ be a closed connected spin manifold with $\pi_2(M)=0$. Assume that
  $N\subset M$ is a codimension two submanifold with trivial normal bundle and
  that the induced map $\pi_1(N)\to \pi_1(M)$ is injective. Assume that the
  Rosenberg index of $N$ does not vanish: $0\ne \alpha(N)\in
  K_*(C^*\pi_1(N))$. 

  Then $M$ does not admit a Riemannian metric of positive scalar curvature.
\end{theorem}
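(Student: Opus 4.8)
The plan is to assume that $M$ carries a metric $g$ of positive scalar curvature and to derive a contradiction by exhibiting, on a complete non-compact covering of $M$, a Dirac operator twisted by a Hilbert $C^*$-module bundle whose coarse index is simultaneously forced to vanish (by the positive scalar curvature, through the version of Roe's vanishing theorem proved in this paper) and seen to encode the non-vanishing Rosenberg index $\alpha(N)$.

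Set $\mathcal A = C^*\pi_1(N)$, fix a tubular neighbourhood $U\cong N\times D^2$ of $N$, and let $\mathcal L_N\to N$ be the Mishchenko bundle, the flat $\mathcal A$-module bundle with $\mathrm{ind}(D_N\otimes\mathcal L_N)=\alpha(N)\in K_*(\mathcal A)$. Since $\pi_1(N)\to\pi_1(M)$ is injective I pass to the covering $\widehat M\to M$ associated to the subgroup $\pi_1(N)\le\pi_1(M)$ (when this subgroup has finite index $\widehat M$ is closed and one argues instead on the universal cover equipped with its $\pi_1(M)$-action; I suppress this variant). The submanifold $N$ lifts to an embedded copy $\widehat N\cong N$ with product neighbourhood $\widehat U\cong N\times D^2$, and $g$ pulls back to a complete metric on $\widehat M$ whose scalar curvature is bounded below by a positive constant. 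Because $\pi_1(\widehat M)=\pi_1(N)$, the manifold $\widehat M$ carries its own Mishchenko bundle $\mathcal L_{\widehat M}$, a flat $\mathcal A$-module bundle; using injectivity of $\pi_1(N)\to\pi_1(M)$ (so that the restriction to $N$ of the universal cover of $M$ is the universal cover of $N$) one gets $\mathcal L_{\widehat M}|_{\widehat U}\cong\mathrm{pr}_N^*\mathcal L_N$.

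Next I transplant the codimension-two datum. Collapsing the complement of $U$ gives a map $f\colon M\to S^2=D^2/\partial D^2$ with $f^{-1}(p)=N$ for the centre point $p$, compatibly with the framing of the normal bundle; pulling $f$ back to $\widehat M$ and then modifying it to be constant on the complement of $\widehat U$ produces $\widehat f\colon\widehat M\to S^2$ with $\widehat f^{-1}(p)=\widehat N$. With $L\to S^2$ the degree-one line bundle I form the Hilbert $\mathcal A$-module bundle $\mathcal E:=\mathcal L_{\widehat M}\otimes\widehat f^*L$. By construction $\mathcal E$ is flat outside a compact neighbourhood $K_0$ of $\widehat N$ (the line bundle $\widehat f^*L$ may be taken flat there, as $L$ is trivial away from $p$), while over $\widehat U$ one has $\mathcal E\cong\mathrm{pr}_N^*\mathcal L_N\otimes\mathrm{pr}_{D^2}^*\lambda$ with $\lambda\to D^2$ representing the Bott generator of $K^0(D^2,\partial D^2)$, and $\mathcal L_{\widehat M}|_{\widehat U}\cong\mathrm{pr}_N^*\mathcal L_N$. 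By the Schr\"odinger--Lichnerowicz formula the twisted operator $D_{\widehat M}\otimes\mathcal E$ is invertible outside $K_0$ (its Weitzenb\"ock term is $\kappa/4$ there), so its coarse index refines canonically to a class in $K_*\bigl(C^*(K_0\subset\widehat M;\mathcal A)\bigr)\cong K_*(\mathcal A)$; a relative index computation -- comparing $D_{\widehat M}\otimes\mathcal E$ with the everywhere-invertible $D_{\widehat M}\otimes\mathcal L_{\widehat M}$, which agree off $K_0$, and using multiplicativity of the index over the product $N\times D^2$ together with Bott periodicity on the $D^2$-factor -- identifies this localized class with $\pm\alpha(N)\in K_*(\mathcal A)$. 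On the other hand the Weitzenb\"ock term of $D_{\widehat M}\otimes\mathcal E$ is uniformly positive outside the compact set $K_0$, so by the twisted version of Roe's vanishing theorem established in this paper its coarse index in $K_*(C^*(\widehat M;\mathcal A))$ vanishes. Hence $\pm\alpha(N)$ maps to $0$ under $K_*(\mathcal A)\cong K_*\bigl(C^*(K_0\subset\widehat M;\mathcal A)\bigr)\to K_*(C^*(\widehat M;\mathcal A))$.

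The proof is thereby reduced to showing that this last map does not annihilate $\alpha(N)$, i.e.\ that the class determined by $\widehat N$ together with its Mishchenko bundle survives in the $K$-theory of the Roe algebra of $\widehat M$; this is the step I expect to be the main obstacle. It is presumably here that the hypothesis $\pi_2(M)=0$ enters, replacing the asphericity assumption of Gromov--Lawson: combined with injectivity of $\pi_1(N)\to\pi_1(M)$ it forces the inclusion $\widehat N\hookrightarrow\widehat M$ to be $2$-connected, which controls the coarse topology of $\widehat M$ near $\widehat N$ enough to detect $\alpha(N)$ -- for instance via a coarse co-assembly or trace argument, or by comparing $\widehat M$ near $\widehat N$ with the model cylinder $N\times\reals^2$. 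Making this detection precise, and establishing the relative index theorem for Dirac operators twisted with Hilbert $C^*$-module bundles in the coarse setting used in the computation above (which in turn rests on the regularity and self-adjointness result and on the new proof of the vanishing theorem announced in the introduction), is where the real work lies; the topological hypotheses ($\pi_1(N)\hookrightarrow\pi_1(M)$, $\pi_2(M)=0$, trivial normal bundle) are comparatively soft and serve only to make the local model around $\widehat N$ an honest product $N\times D^2$ carrying the correct restriction of $\mathcal L_{\widehat M}$.
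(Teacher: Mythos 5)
Your opening move --- passing to the covering $\widehat M\to M$ associated to the subgroup $\pi_1(N)\hookrightarrow\pi_1(M)$ --- agrees with the paper, but from there your argument enters a framework that cannot close. You localize the coarse index of $D\otimes\mathcal E$ (with $\mathcal E=\mathcal L_{\widehat M}\otimes\widehat f^*L$) to a class in $K_*\bigl(C^*(K_0\subset\widehat M;\mathcal A)\bigr)\cong K_*(\mathcal A)$ and then hope to contradict the vanishing of its image in $K_*(C^*(\widehat M;\mathcal A))$. But for a compact subset $K_0$ of a complete connected non-compact manifold the map $K_*(C^*(K_0\subset\widehat M;\mathcal A))\to K_*(C^*(\widehat M;\mathcal A))$ is identically zero: this is exactly the paper's statement that compact subsets are coarsely $\mathcal A$-negligible, and it is the very mechanism by which the paper \emph{proves} its partial vanishing theorem. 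So both sides of your intended contradiction vanish for trivial reasons, independently of positive scalar curvature, and the step you defer (``show that this last map does not annihilate $\alpha(N)$'') is not a gap to be filled but an impossibility. To make the localized class do any work you would have to show that psc kills it already in $K_*(C^*(K_0\subset\widehat M;\mathcal A))\cong K_*(\mathcal A)$, which requires comparing $D\otimes\mathcal E$ with the everywhere-invertible $D\otimes\mathcal L_{\widehat M}$ \emph{on $K_0$ itself}; since the curvature of $\widehat f^*L$ is concentrated on $K_0$, that comparison only succeeds if this curvature can be made arbitrarily small, i.e.\ under an enlargeability hypothesis --- precisely what the theorem is designed to avoid. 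Note also that the vanishing theorem of this paper is stated for bundles that are flat on all of the manifold, so it does not literally apply to your $\mathcal E$.

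The paper's detection mechanism is entirely different, and it is the missing idea: reduce codimension two to codimension one. Remove the tubular neighbourhood $\overline W\cong N\times D^2$ from $\overline M$ and form the double $\mathbf D(\overline M,\overline W)$ of the complement, which is \emph{partitioned} by the compact hypersurface $\partial\overline W\cong N\times S^1$. The hypotheses $\pi_2(M)=0$ and $\pi_1$-injectivity are used, via a Mayer--Vietoris and relative Hurewicz argument, to show that $\pi_1(N\times S^1)\to\pi_1(\overline M\setminus\overline W)$ is a \emph{split} injection; the splitting permits extending the Mishchenko bundle of $N\times S^1$ to a bundle $\mathcal E$ over the double that is flat \emph{everywhere}. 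The partitioned manifold index theorem (in its Hilbert-module version) then provides a homomorphism $\varphi_{\partial\overline W}\colon K_1(C^*(\mathbf D;\mathcal A))\to K_0(\mathcal A)$ sending $\ind(D_{\mathcal E})$ to $\alpha(N\times S^1)=\alpha(N)\otimes e\ne 0$ --- a map \emph{out of} the $K$-theory of the Roe algebra, rather than the zero map induced by inclusion of a negligible set --- while a psc metric on $M$ would make the double uniformly psc outside a compact set and hence, by the vanishing theorem applied to the flat bundle $\mathcal E$, force $\ind(D_{\mathcal E})=0$. In particular $\pi_2(M)=0$ enters in constructing the flat extension of the Mishchenko bundle, not in ``detecting'' a localized index near $\widehat N$ as you speculate.
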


\begin{remark}
   Here and above one can use either the
reduced or the maximal group $C^*$-algebra. The first  one is closely connected to
the Baum-Connes and the strong Novikov conjecture, but a priori the latter one might 
lead to stronger obstructions. The material in the paper at hand 
is independent of which group $C^*$-algebra is used so that we 
will not distinguish them in our notation. 
\end{remark}

\begin{remark}
  In \cite[Theorem 3.4]{MR2929034} a result close to our Theorem
  \ref{theo:codim2} is stated without any assumption on
  $\pi_2(M)$. Unfortunately, the statement of \cite[Theorem 3.4]{MR2929034} is
  wrong, the manifolds  $N=T^n$ and $M=(T^n\times S^2) \# (T^n\times S^2)$, 
  $n>2$, providing counterexamples. Our correct formulation of Theorem
  \ref{theo:codim2} had been established long before \cite{MR2929034} appeared, 
  and the authors of the present paper reported on it at several occasions in
  seminars and at conferences.
\end{remark}

\begin{remark}
  The concept of enlargeability is not used in our paper; it
  is entirely based on properties of the
  Rosenberg index. Because  enlargeable spin manifolds 
  have non-vanishing
  Rosenberg index \cite[Theorem 1.2]{MR2353861}, \cite{MR2259056},
  \cite[Theorem 1.5]{HankeKotschickRoeSchick}, this is no loss of 
  generality. 
  \end{remark}

Coarse index theory as developed by Roe is based on 
functional calculus for  the (unbounded) Dirac operator. In our context 
we are dealing with Dirac operators twisted by Hilbert
$C^*$-module bundles so that, in order to apply  the functional calculus 
in \cite{MR1949157}, it is required to establish 
 regularity and self-adjointness of their closures. In our 
 opinion this fact  is not well-documented in the literature and 
 hence we decided to give a self-contained and
complete proof of the following theorem, which might be of independent interest. 
{Throughout our paper, $A$ denotes a complex unital
  $C^*$-algebra}.

\begin{theorem}\label{theo:reg_sa}
  Let $M$ be a complete Riemannian manifold, and let $E\to M$ be 
  a smooth Hilbert $A$-module bundle with finitely generated projective
  fibers, which is equipped  with a connection $\nabla$, compatible with the
inner product.

  Let $D$ be any Dirac type operator on a  {complex} Dirac bundle 
  $S\to M$ and $D_E$ its twist with
  $(E,\nabla)$. Then the closure of $D_E$ is a {densely defined, 
  regular and self-adjoint operator} on the
  Hilbert $A$-module $L^2(M,S \otimes E)$ of $L^2$-sections of $S\tensor E$.
\end{theorem}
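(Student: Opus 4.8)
The plan is to glue together two classical facts --- interior elliptic regularity for Dirac-type operators with $C^*$-algebra coefficients, and the compact case of the theorem --- by a cut-off construction on an exhaustion of $M$. Write $\mathcal E:=L^2(M,S\tensor E)$ and let $D_E$ denote the twisted operator on $C^\infty_c(M,S\tensor E)$. Since $D$ is of Dirac type and $\nabla$ is compatible with the $A$-valued inner product, $D_E$ is a first order differential operator with the same principal symbol $\sigma$ as $D$ (Clifford multiplication, tensored with the identity of $E$), and integration by parts shows that $D_E$ is symmetric; hence $D_E$ is closable, and its closure $\overline{D_E}$ is densely defined, closed and symmetric. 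For $u$ in the domain of $\overline{D_E}$ the symmetry identity gives $\innerprod{(\overline{D_E}\pm i)u,(\overline{D_E}\pm i)u}=\innerprod{\overline{D_E}u,\overline{D_E}u}+\innerprod{u,u}\ge\innerprod{u,u}$, so $\overline{D_E}\pm i$ is bounded below. Therefore, \emph{if} the ranges of $D_E+i$ and of $D_E-i$ on $C^\infty_c$ are dense in $\mathcal E$, then an approximation argument (use the lower bound to produce Cauchy sequences and closedness of $\overline{D_E}$ to pass to the limit) shows that the ranges of $\overline{D_E}\pm i$ are \emph{all} of $\mathcal E$; and a closed symmetric operator on a Hilbert $A$-module whose $\pm i$-translates are surjective is regular and self-adjoint --- its Cayley transform $U:=(\overline{D_E}-i)(\overline{D_E}+i)^{-1}$ is then a genuine unitary of $\mathcal E$ with $1-U$ injective and of dense range, and $\overline{D_E}$ is recovered as its inverse Cayley transform. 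Thus the whole problem reduces to showing that the ranges of $D_E\pm i$ are dense.

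For the local analysis I would use that $E$, having finitely generated projective fibres, is locally a direct summand of a trivial bundle $U\times A^n$, so in a chart $D_E$ is, up to a smooth bundle automorphism, a finite matrix of scalar Dirac-type operators with coefficients in $A$; the classical interior elliptic estimates (G\aa rding inequality, Friedrichs mollifiers) carry over verbatim to $A$-valued functions and distributions. This yields, first, that local $L^2$-solutions of $D_E v=f$ with $f$ smooth are smooth, and, second, the product rule $\overline{D_E}(\psi u)=\psi D_E^{*}u+\sigma(d\psi)u$ for a scalar cut-off $\psi\in C^\infty_c(M)$ and $u$ in the domain of the Hilbert-module adjoint $D_E^{*}$.

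For the global step, completeness enters. It suffices to approximate an arbitrary $\eta\in C^\infty_c$ with $\operatorname{supp}\eta\subseteq K$ compact by elements of the range of $D_E+i$, the case of $D_E-i$ being identical. Completeness lets me exhaust $M$ by relatively compact open sets $B_R$ (sublevel sets of a proper $1$-Lipschitz exhaustion function) and choose cut-offs $\chi_R\equiv1$ on $B_R$, with $\operatorname{supp}\chi_R\subseteq B_{2R}$ and $|d\chi_R|\le C/R$. For $R$ large enough that $K\subseteq B_R$, embed a neighbourhood of $\overline{B_{2R}}$ isometrically, together with $S$, $E$ and $\nabla$, into a closed manifold $X_R$ (enlarge to a smooth relatively compact domain, flatten the metric in a collar away from $B_{2R}$, and double). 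On the closed manifold $X_R$ the statement is the compact case of the theorem, which follows from $A$-valued Sobolev theory: the closure $\overline{D_E^{X_R}}$ of $D_E$ on $X_R$ is regular and self-adjoint, so $\overline{D_E^{X_R}}+i$ is invertible with $\lVert(\overline{D_E^{X_R}}+i)^{-1}\rVert\le1$. Solving $(\overline{D_E^{X_R}}+i)v_R=\eta$ produces, by elliptic regularity, a smooth $v_R$ with $\lVert v_R\rVert\le\lVert\eta\rVert$; then $\chi_R v_R\in C^\infty_c(M,S\tensor E)$ and, since $\chi_R\equiv1$ on $\operatorname{supp}\eta$ and $D_E$ agrees with $D_E^{X_R}$ on $B_{2R}$, the Leibniz rule gives $(D_E+i)(\chi_R v_R)=\eta+\sigma(d\chi_R)v_R$, where $\lVert\sigma(d\chi_R)v_R\rVert^2=\bigl\lVert\int_M|d\chi_R|^2\innerprod{v_R,v_R}\bigr\rVert\le(C/R)^2\lVert\eta\rVert^2\to0$. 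Hence $\eta$ lies in the closure of the range of $D_E+i$, and letting $R\to\infty$ finishes the proof.

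The step I expect to be the main obstacle is ensuring that these functional-analytic manipulations actually deliver \emph{regularity}, not merely a formal self-adjointness. Over a noncommutative $A$ one cannot conclude self-adjointness --- let alone regularity --- from vanishing deficiency spaces, a Rellich-type compactness of the inclusion of Sobolev modules is unavailable, and a closed submodule with trivial orthogonal complement need not be the whole module; so the range of $\overline{D_E}\pm i$ has to be produced on the nose rather than merely shown to be dense with trivial annihilator. This is precisely why the argument is routed through the compact case together with the explicit cut-off construction instead of a soft orthogonality argument, and why the $A$-coefficient elliptic theory of the middle paragraph --- routine in itself --- must be in place beforehand.
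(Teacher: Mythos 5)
Your reduction from the complete to the compact case is, step for step, the paper's own argument: the same symmetry identity giving the lower bound $\norm{(\overline{D_E}\pm i)u}\ge \norm{u}$, the same criterion that a closed symmetric operator whose translates $T\pm i$ have dense (hence, by the lower bound and closedness, full) range is self-adjoint and regular (this is the paper's Theorem~\ref{thm:Characterization of self-adjoint, regular operators}, resting on Lance's Lemmas 9.7 and 9.8; your Cayley-transform phrasing is an equivalent packaging of the same facts), the same isometric embedding of a compact neighbourhood into a closed manifold, the same cut-off with small gradient, and the same commutator estimate $\norm{\sigma(d\chi_R)v_R}\le (C/R)\norm{v_R}\le (C/R)\norm{\eta}$. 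All of that is correct.

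The genuine gap is the compact case itself, which you dispatch with ``follows from $A$-valued Sobolev theory.'' That is exactly the step the paper regards as delicate and proves in detail, and Sobolev/elliptic estimates do not deliver it. They give symmetry and, at best, that $\overline{D_E}\pm i$ have dense range with trivial annihilator; as you yourself note in your closing paragraph, over a noncommutative $A$ this yields neither self-adjointness nor regularity, since a closed submodule with trivial orthogonal complement need not be everything. What the paper actually uses on the closed manifold is the Mishchenko--Fomenko pseudodifferential calculus: a parametrix $Q$ of order $-1$ with smoothing remainders $R,T$, the identities $\overline{D_E}\circ\overline{Q}=\overline{D_EQ}$ and $\dom{\overline{D_E}}=\im{\overline{Q}}+\im{\overline{T}}$, from which one gets $\dom{D_E^*}\subset\dom{\overline{D_E}}$ (self-adjointness) and the presentation of the graph as the image $U(\mcH\oplus\mcH)$ of a bounded adjointable operator, whence complementedness of the graph by Lance's Theorem 3.2. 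Without such a parametrix argument (or an explicit appeal to Vassout's Proposition 3.4.9), your base case is unproved in precisely the respect your own final paragraph identifies as the crux: on the closed manifold, too, the range of $\overline{D_E}\pm i$ must be produced ``on the nose,'' and that is not a consequence of $A$-valued elliptic estimates alone.
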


We follow the program of Vassout \cite{Vassout} who proves a corresponding
statement for foliations, based on the existence of a suitable
pseudodifferential calculus.
\begin{remark}
  We could locate a couple of accounts of the result for compact manifolds,
  which however, for our taste, were quite sketchy and did not cover
  the case of non-compact manifolds. Zadeh \cite[Lemma 2.1]{MR2670972} offers
  an alternative proof that also includes the case of non-compact manifolds. However, it is based on
  properties of the wave operators $e^{itD_E}$,  whose existence is 
  assumed in \cite{MR2670972} without further reference. {We believe that  
  a construction of these operators is possible, independent of a general 
  functional calculus for $D_E$ (which would depend on normality and
  self-adjointness of this operator and therefore would render the argument
  circular). But we could not  find a detailed construction in the
  literature.} Therefore we decided to give a full and independent proof of
  Theorem \ref{theo:reg_sa} in Section \ref{regular} below. 
\end{remark} 

A final crucial ingredient of our proof of the codimension two obstruction
in Theorem~\ref{theo:codim2} is a
generalized vanishing theorem for the coarse index on non-compact manifolds.

\begin{theorem}[Partial vanishing theorem]
\label{thm:Vanishing_theorem_intro}
Let $(M,g)$ be a complete connected non-compact Riemannian spin manifold such
that, outside of a compact subset, the  scalar curvature  is uniformly
positive. Let $E\to M$ be a  Hilbert $A$-module bundle {as in  Theorem 
\ref{theo:reg_sa} above} and assume that this bundle is flat.  Then the coarse
index 
$\ind(D_{E})\in K_*(C^*(M;A))$ vanishes. 
\end{theorem}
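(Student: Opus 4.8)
The plan is to combine the Schrödinger–Lichnerowicz formula, the functional calculus for $D_E$ made available by Theorem~\ref{theo:reg_sa}, finite propagation speed for the associated wave operators, and an Eilenberg-swindle argument exploiting non-compactness of $M$; flatness of $E$ enters only at the very first step. First I would record the Bochner identity: since $E$ carries a flat connection, the twisting-curvature term drops out and
\[
D_E^2 \;=\; \nabla^*\nabla + \tfrac14\operatorname{scal}_g .
\]
By hypothesis there are a compact set $K\subset M$ and $c_0>0$ with $\operatorname{scal}_g\ge c_0$ on $M\setminus K$; fix $R$ with $K\subset B:=B_R(x_0)$ and put $\eps:=\tfrac12\sqrt{c_0}$. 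Then $\langle D_E s, D_E s\rangle \ge \eps^2\langle s,s\rangle$ in $A$ for every smooth compactly supported section $s$ of $S\tensor E$ supported in $M\setminus\overline B$; informally, $D_E$ has a spectral gap $(-\eps,\eps)$ away from $\overline B$. By Theorem~\ref{theo:reg_sa} the closure of $D_E$ is densely defined, regular and self-adjoint on $L^2(M,S\tensor E)$, so $f\mapsto f(D_E)$ is defined for bounded continuous $f$; combined with local $A$-compactness (for $\phi\in C_c(M)$ the operator $\phi(D_E+i)^{-1}$ is $A$-compact because the fibers of $E$ are finitely generated projective), this is what makes $\ind(D_E)\in K_*(C^*(M;A))$ well defined, namely as the class built from $\chi(D_E)$ for any normalizing function $\chi$, using that $\chi(D_E)^2-1=(\chi^2-1)(D_E)$ lies in $C^*(M;A)$.

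Next I would localize the index near $K$. Choose $\chi$ so that $\psi:=\chi^2-1$ is supported in $(-\eps,\eps)$. The goal is to show that $\psi(D_E)$ lies in the closed ideal $C^*(\overline{B_{R'}}\subset M;A)\subseteq C^*(M;A)$ of operators supported within finite distance of $K$, for a suitable $R'\ge R$; granting this, the standard manipulation of the index class (writing $\chi(D_E)$ in off-diagonal form and taking a boundary map) shows that $\ind(D_E)$ lies in the image of $K_*\big(C^*(\overline{B_{R'}}\subset M;A)\big)\to K_*(C^*(M;A))$. The localization rests on two inputs. The first is finite propagation speed: the unitaries $e^{itD_E}$, available through Theorem~\ref{theo:reg_sa}, satisfy $\operatorname{supp}(e^{itD_E}s)\subset\{x:\operatorname{dist}(x,\operatorname{supp}s)\le|t|\}$, proved as in the scalar case by an energy estimate for $\partial_t u = iD_E u$ with respect to the $A$-valued inner product; an approximation argument ($\psi_n$ with $\widehat{\psi_n}$ compactly supported and $\psi_n\to\psi$ uniformly) also re-proves $\psi(D_E)\in C^*(M;A)$. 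The second input is the spectral gap: comparing $D_E$ with a Dirac operator on a complete auxiliary manifold that agrees with $M$ outside $\overline B$ but on which the analogous operator is invertible (so that $\psi$ of it vanishes), and matching the two wave flows for short times via finite propagation speed, forces the part of $\psi(D_E)$ supported away from $\overline B$ to be negligible. Making this comparison fully rigorous with Hilbert-module coefficients — in particular re-establishing finite propagation speed in that setting and combining it correctly with the gap — is, I expect, the main obstacle; everything surrounding it is formal.

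Finally I would deduce the vanishing. Since $M$ is complete, connected and non-compact it is unbounded (Hopf–Rinow), so one can pick points $x_0,x_1,x_2,\dots$ with $\operatorname{dist}(x_i,x_j)\to\infty$. The ideal $C^*(\overline{B_{R'}}\subset M;A)$ is isomorphic to $\mathbb K(H_A)$, the $A$-compact operators on the standard Hilbert $A$-module, and the copies of it localized near the $x_i$ are pairwise orthogonal inside $C^*(M;A)$ and mutually equivalent there via finite-propagation partial isometries. An Eilenberg swindle therefore shows that the inclusion $C^*(\overline{B_{R'}}\subset M;A)\hookrightarrow C^*(M;A)$ induces the zero map on $K$-theory. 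Combined with the previous step this yields $\ind(D_E)=0\in K_*(C^*(M;A))$. Specializing $A=\complexs$ (and $E$ trivial) recovers Roe's vanishing theorem, and because flatness of $E$ was used only in the Bochner step, the argument transfers verbatim — which is exactly the point.
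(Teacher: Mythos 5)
Your overall architecture is the same as the paper's: make $\ind(D_E)$ lift to $K_*\bigl(C^*(K\subset M;A)\bigr)$ by showing that $(1-\chi^2)(D_E)$ lies in the ideal of operators supported near the compact set for a suitable normalizing function $\chi$, and then kill the image of this $K$-group in $K_*(C^*(M;A))$ by an Eilenberg swindle exploiting non-compactness (the paper absorbs the compact set into an isometrically embedded ray $\gamma(\reals_+)$ and shows the corresponding ideal has vanishing $K$-theory). Flatness entering only through the Lichnerowicz formula, and the need to re-establish unit propagation of $\exp(isD_E)$ with Hilbert $A$-module coefficients, are also exactly as in the paper.

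However, the step you yourself flag as ``the main obstacle'' is a genuine gap, and it is the step that carries the whole theorem. You propose to prove $(1-\chi^2)(D_E)\in C^*(\overline{B}\subset M;A)$ by comparing $D_E$ with the analogous operator on a complete auxiliary manifold agreeing with $M$ outside $\overline B$ and on which that operator is invertible. The existence of such a comparison object is unjustified: it amounts to capping off $\partial B$ by a complete spin piece of uniformly positive scalar curvature over which the flat bundle $E$ extends, and such a filling does not exist in general --- in the paper's application (the double $\mathbf{D}(\overline{M},\overline{W})$ partitioned by $N\times S^1$ with $\alpha(N)\neq 0$) its existence would essentially contradict the partitioned manifold index theorem, so the comparison argument would be unavailable precisely where the vanishing theorem is needed. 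Moreover, even granting the auxiliary operator, ``matching the wave flows for short times'' does not by itself control $\psi(D_E)$ with $\psi=\chi^2-1$, since $\hat\psi$ is not compactly supported; a quantitative approximation is required. The paper's proof uses no comparison manifold (and no Friedrichs extension): it chooses $\chi$ with $\varphi=1-\chi^2$ supported in $[-R,R]$, $R^2<s_0/4$, writes $\varphi^2=F_\varepsilon^2+g_\varepsilon$ with $\hat F_\varepsilon$ compactly supported and $\abs{x^j g_\varepsilon}$ uniformly small (Lemmas \ref{lem:lemma_one_from_appendix} and \ref{lem:lemma_two_from_appendix}), uses unit propagation so that $F_\varepsilon(D_E)u$ stays supported in the region where $\mathrm{scal}\geq s_0$, applies the Schr\"odinger--Lichnerowicz formula there, and then invokes the membership criterion of Lemma \ref{lem:rel_crit} to conclude $\varphi(D_E)\in C^*(X\subset M;A)$ (Proposition \ref{prop:lift}). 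An intrinsic estimate of this kind (or a fully developed Friedrichs-extension theory for regular Hilbert $A$-module operators) is exactly what your proposal is missing.
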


\begin{remark}
  The special case of this result with $A=\complexs$ and trivial $E$ has
  been stated in \cite[Proposition 3.11 and following remark]{Roe_AMS} without
  proof. Only  recently, Roe \cite{Roe_partial_vanish} published a full
  proof of this special case, using the theory of Friedrichs extensions of
  unbounded operators. Zadeh \cite[Theorem 3.1]{MR2929034} offers a proof
  of Theorem \ref{thm:Vanishing_theorem_intro}, again based on Friedrichs 
  extensions. We feel that this is not completely satisfactory. Although
  the concept of Friedrichs extensions for unbounded operators on Hilbert
  $A$-modules should exist, it has not been developed yet, to the 
  best of our knowledge. In particular the regularity of the resulting operator 
  must be taken care of. 
\end{remark}

We present a proof in the spirit of Roe's coarse index
theory, based on functional calculus and unit propagation
of the wave operator. This proof first appeared  in the second author's doctoral
thesis \cite[Theorem 0.2.1]{thesis_pape}. We expect that it can be generalized
to other interesting 
situations, notably to perturbations of the signature operator, as they
show up in proofs of the homotopy invariance of higher signatures, compare \cite{MR1142484}.

\begin{remark}
  The codimension two obstruction in \ref{theo:codim2} has a slight
  strengthening: even \emph{stably} $M$ does not admit a metric of positive
  scalar curvature. Here, ``stably'' means that for every simply connected
  closed $8$-dimensional spin manifold $B$ with $\hat A(B)=1$, i.e.~for any
  so-called \emph{Bott manifold}, and for every $l\ge 0$ the manifold $M\times
  B^l$ does not admit a metric of positive scalar curvature. This simply
  follows by applying the codimension two obstruction theorem to  $N\times
  B^l \subset M\times B^l$. 

  The \emph{stable Gromov-Lawson-Rosenberg conjecture} \cite[Conjecture
  4.17]{RosenbergStolz} states that a closed spin manifold $M$ 
  stably admits a metric with positive scalar curvature if and only if its
  Rosenberg index $\alpha(M)\in KO_*(C_{\reals}^*\pi_1(M))$ vanishes. 

  Stolz \cites{MR1403963,MR1937026} proved that the stable Gromov-Lawson-Rosenberg conjecture holds for all manifolds whose fundamental groups satisfy the strong Novikov conjecture.
 Recall that the \emph{unstable} version of
  \cite[Conjecture 4.8]{RosenbergStolz} is not true
  \cite{MR1632971}. The construction of a corresponding example uses the
  \emph{codimension one} obstruction of Schoen and Yau \cite{MR535700}, which is 
  based on minimal hypersurfaces and independent from  index theory.  

 Arguing  in a rather indirect manner using Stolz's theorem 
  it follows that under the assumptions  of
 \ref{theo:codim2} {not only $\alpha(N)$}, but also the
 Rosenberg index $\alpha(M)\in K_*(C^*\pi_1(M))$
 is non-zero, 
if $\pi_1(M)$ satisfies the strong
   Novikov conjecture,
   
 However, we have not been able to prove non-vanishing 
 of the Rosenberg index $\alpha(M)$ in complete generality in the situation of
 Theorem  \ref{theo:codim2}.   
 We leave this as an open question. In view of the possibility that
 $\alpha(M)$ could be zero,  
 one might speculate whether Theorem \ref{theo:codim2} 
 can be used in the end to establish counterexamples to the strong Novikov 
 conjecture.
 \end{remark}

\begin{remark}
  We formulate and prove our theorem in the context of  complex $C^*$-algebras and the
  complex Dirac operator. Firstly,  this is most suited to the approach to
  coarse index theory as developed by Roe, and secondly the literature on self 
  adjoint regular
  operators and their functional calculus is much more complete in this
  case. Nonetheless, we expect that all of our results can be generalized to real
  $C^*$-algebras and the real Dirac operator, which indeed furnish the most efficient 
  context for geometric applications of the index theory of Dirac operators.
\end{remark}

\section{Regularity and self-adjointness of Dirac operators twisted with Hilbert-module bundles} 
\label{regular}

In this section we prove in detail that twisted Dirac type operators on
complete Riemannian manifolds have regular and self-adjoint closures. As a preparation,
we recall some basics about Hilbert $C^*$-modules and regular, self-adjoint
operators on them.


Let $A$ be a (complex) $C^*$-algebra, which in our paper is assumed to be unital 
throughout. Recall
that a Hilbert $A$-module is a right $A$-module with an $A$-valued inner product
satisfying a number of axioms, see page 4 in \cite{MR1325694}, that serves as
our main reference for the theory of Hilbert $C^*$-modules and unbounded operators. 
We emphasize that, in contrast to usual Hilbert spaces, a closed submodule
$\mcH_0\subset\mcH$ and the orthogonal submodule $\mcH_0^{\perp}\subset\mcH$
do not complement each other in general, i.e.~usually
$\mcH_0\oplus\mcH_0^{\perp}\varsubsetneq\mcH$. If the opposite inclusion holds
one says that $\mcH_0$ \emph{has an orthogonal complement}. 

Let $\mcH_1$ as well as $\mcH_2$ be Hilbert
$A$-modules. An \emph{operator} from $\mcH_1$ to $\mcH_2$ is an $A$-linear
map $T\colon\dom{T}\pf\mcH_2$ on a submodule $\dom{T}$ of $\mcH_1$. The latter
is called the \emph{domain} of $T$. One calls $T$ \emph{densely defined} if
$\overline{\dom{T}}=\mcH_1$. An operator $S$
from $\mcH_1$ to $\mcH_2$ is called an \emph{extension} of $T$, written
$T\subset S$, if $\dom{T}\subset\dom{S}$ and $Tx=Sx$ holds for each
$x\in\dom{T}$. The \emph{graph} of $T$ is denoted by $G(T)$:
\[
G(T):=\{(x,y)\in\mcH_1\oplus\mcH_2\,;\,x\in\dom{T}\textnormal{ and } y=Tx\}\text{ .}
\]
One calls $T$ \emph{closed} if $G(T)$ is closed in $\mcH_1\oplus\mcH_2$.  The
operator $T$ is called {\emph{closable}} if it admits a closed extension. This
is equivalent to the existence of an operator $S$ with
$G(S)=\overline{G(T)}$.  In this case $S$ is the smallest closed extension of $T$, 
called the \emph{closure} and usually denoted $\overline{T}$ or
$T_{\min}$. Its domain is
\begin{equation*}
\label{eq:Describtion of the minimal domain}
\dom{T_{\min}}
=
\{x\in\mcH_1\,|\,\textnormal{$\exists\,(x_n)_{n\in\mbN}\subset\dom{T}$ with $x_n\pf x$ and $Tx_n\pf T_{\min}x$}\}\text{ .}
\end{equation*}

For a densely defined operator $T\pp\dom{T}\pf\mcH_2$ we set
\[
\dom{T^*}
:=
\{y\in\mcH_2\,|\,\exists  z\in\mcH_1\textnormal{ with } \langle Tx,y\rangle=\langle x,z\rangle\;\forall  x\in\textnormal{dom}(T)\}
\text{ .}
\]
The element $z$  appearing on the right is unique and we can define the \emph{adjoint
  operator} of $T$ as the operator $T^*\pp\dom{T^*}\pf\mcH_1$ given by
$T^*y=z$. Note that $T^*$ is a closed operator {and that for a closable 
operator $T$, we have $T^* =  \overline{T}^*$.} 

The operator $T$ is called {{\em adjointable}}, if $\dom{T}=\mcH_1$ and $\dom{T^*}=\mcH_2$. 
Adjointable operators are automatically bounded, but bounded operators are not 
necessarily adjointable, see \cite[p. 8]{MR1325694}. {Because every 
densely defined operator has an adjoint by definition, we will prefer 
 the term {\em bounded adjointable} instead of {\em adjointable} in order to 
avoid any confusion}. The space of 
bounded adjointable operators is denoted  $\mathcal{L}_A(\mcH_1,\mcH_2)$, or briefly
$\mathcal{L}_A(\mcH_1)$ if $\mcH_1=\mcH_2$. 

 The subspace of \emph{$A$-compact operators} is the closure of
  the $A$-linear span of operators of the form $x\mapsto \innerprod{x,a}\,b$ where 
  $a\in \mcH_1$, $b\in\mcH_2$.

\begin{defi}
\label{defi;Regular operator}
Let $T\pp\dom{T}\pf\mcH_2$  be an operator with $\dom{T}\subset\mcH_1$. One
calls $T$ \emph{regular} if
\begin{enumerate}[(i)]
\item $T$ is densely defined and closed,
\item $T^*$ is densely defined,
\item the graph of $T$ (a closed subset of $\mcH_1\oplus \mcH_2$) has an
  orthogonal complement.
\end{enumerate}
\end{defi}

We now come to a useful criterion for regularity and self-adjointness. Recall 
that a densely defined operator $T$ is called {\em symmetric}, if $T  \subset
T^*$ 
and {\em self-adjoint}, if $T = T^*$. Because $T^*$ is  a closed operator
by
\cite[p.~95]{MR1325694},
symmetric operators are closable and 
self-adjoint operators are closed. 
\begin{thm}[Characterization of self-adjoint, regular operators]
\label{thm:Characterization of self-adjoint, regular operators}
Let $T$ be a closed, densely defined and symmetric operator in the
Hilbert $A$-module $\mcH$. Then the  following are equivalent:
\begin{enumerate}[(i)]
\item $T$ is self-adjoint and regular,
\item $T+ i$, $T-i$ both have dense range.
\end{enumerate}
\end{thm}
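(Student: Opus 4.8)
The plan is to extract from symmetry the basic estimate controlling $T\pm i$, and then prove the two implications separately. Since $T$ is symmetric, for $x\in\dom{T}$ the cross terms in $\innerprod{(T\pm i)x,(T\pm i)x}$ cancel, because $\innerprod{Tx,x}=\innerprod{x,Tx}$; this gives the identity $\innerprod{(T\pm i)x,(T\pm i)x}=\innerprod{Tx,Tx}+\innerprod{x,x}$. Hence $\|(T\pm i)x\|\ge\|x\|$, so $T\pm i$ is injective and $(T\pm i)^{-1}$ is a contraction defined on $\operatorname{ran}(T\pm i)$; since $T$ is closed, so is this inverse, and a closed contraction has closed domain, so $\operatorname{ran}(T\pm i)$ is closed.

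For $(ii)\Rightarrow(i)$: combining the previous step with the density hypothesis gives $\operatorname{ran}(T+i)=\operatorname{ran}(T-i)=\mcH$. A direct computation shows $\ker(T^{*}\mp i)\subseteq\operatorname{ran}(T\pm i)^{\perp}=\mcH^{\perp}=\{0\}$. To see that $T$ is self-adjoint, take $y\in\dom{T^{*}}$ and use surjectivity of $T-i$ to find $x\in\dom{T}$ with $(T-i)x=(T^{*}-i)y$; since $T\subset T^{*}$ this gives $(T^{*}-i)(x-y)=0$, hence $x=y$, so $\dom{T^{*}}\subseteq\dom{T}$ and $T=T^{*}$. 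It remains to complement $G(T)$. Now $(T+i)^{-1}\colon\mcH\to\dom{T}$ is everywhere defined with norm at most $1$, and using $T=T^{*}$ one checks that it lies in $\mathcal{L}_A(\mcH)$ with $((T+i)^{-1})^{*}=(T-i)^{-1}$. Define $W\colon\mcH\to\mcH\oplus\mcH$ by $Wu=\bigl((T+i)^{-1}u,\;u-i(T+i)^{-1}u\bigr)$. Both components are bounded adjointable, so $W\in\mathcal{L}_A(\mcH,\mcH\oplus\mcH)$; writing $u=(T+i)x$, so that the second component of $Wu$ equals $Tx$, the identity of the first paragraph yields $\innerprod{Wu,Wu}=\innerprod{u,u}$, i.e.~$W^{*}W=1$. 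Since $(T+i)^{-1}$ maps $\mcH$ bijectively onto $\dom{T}$, the range of $W$ is exactly $G(T)$, so $WW^{*}\in\mathcal{L}_A(\mcH\oplus\mcH)$ is a self-adjoint idempotent with range $G(T)$; thus $\mcH\oplus\mcH=G(T)\oplus\ker(WW^{*})$ and $T$ is regular.

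For $(i)\Rightarrow(ii)$: using $T=T^{*}$, a short computation identifies $G(T)^{\perp}=\{(-Tv,v)\,;\,v\in\dom{T}\}$. Regularity gives the orthogonal decomposition $\mcH\oplus\mcH=G(T)\oplus G(T)^{\perp}$, so for $h\in\mcH$ we may write $(0,h)=(x,Tx)+(-Tv,v)$ with $x,v\in\dom{T}$; then $x=Tv$ and $h=Tx+v=(1+T^{2})v$, showing that $1+T^{2}\colon\dom{T^{2}}\to\mcH$ is onto. Factoring $1+T^{2}=(T-i)(T+i)=(T+i)(T-i)$ on $\dom{T^{2}}$ then shows that $T+i$ and $T-i$ are onto, in particular they have dense range.

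The only point that genuinely uses Hilbert-module theory, rather than copying the Hilbert-space proof verbatim, is the construction of the complement of $G(T)$ in $(ii)\Rightarrow(i)$: closed submodules need not be orthogonally complemented, so one cannot simply invoke a projection, and the device is to realize $G(T)$ as the range of the isometry $W$ assembled from the (bounded adjointable) resolvent, after which $WW^{*}$ is the required projection. The remaining work — the inner-product bookkeeping, checking adjointability of $(T\pm i)^{-1}$ with the correct signs, and the elementary identities $(T\mp i)(T\pm i)=1+T^{2}$ — is routine; I expect the sign conventions in the adjointability computation to be the most error-prone part rather than a real obstacle.
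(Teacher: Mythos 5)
Your proof is correct, and it is worth noting where it departs from the paper's. The paper's argument for (ii) $\Rightarrow$ (i) has the same skeleton as yours — injectivity and closed range of $T\pm i$ from the symmetry identity, then the kernel computation showing $\dom{T^*}\subset\dom{T}$ — but it outsources the closed-range step, the regularity step, and the entire direction (i) $\Rightarrow$ (ii) to \cite[Lemmas 9.7 and 9.8]{MR1325694}. You instead prove these ingredients from scratch. The two genuinely added pieces are: (a) realizing $G(T)$ as the range of the isometry $Wu=\bigl((T+i)^{-1}u,\,u-i(T+i)^{-1}u\bigr)$, so that $WW^*$ is the complementing self-adjoint idempotent — this is in effect a proof of Lance's Lemma 9.8, and you correctly isolate the adjointability of the resolvent, with $((T+i)^{-1})^*=(T-i)^{-1}$, as the one point where Hilbert-module theory (rather than the Hilbert-space projection theorem) must do the work; and (b) deducing surjectivity of $1+T^2$, hence of $T\pm i$, from the explicit description $G(T)^\perp=\{(-Tv,v)\}$ together with the orthogonal decomposition furnished by regularity. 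Your sign pairings ($\ker(T^*\mp i)\perp\operatorname{ran}(T\pm i)$ and the adjoint of the resolvent) all check out under the standard convention that the $A$-valued inner product is conjugate-linear in the first variable. What your version buys is self-containedness; what the paper's buys is brevity.
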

\begin{proof}
  By \cite[Lemma 9.8]{MR1325694}, if $T$ is self-adjoint and regular then
  $T\pm i$ both have dense range.

  Conversely, assume $T\pm i$ both have dense range.
  By \cite[Lemma 9.7]{MR1325694}, the assumptions imply that $T+i$ and $T-i$
  are injective and have closed range. Therefore $T\pm i$ both are bijective (and 
  in particular  both operators have range $\mathcal{H}$). 

  As $T$ is symmetric, $T^*$ is an extension of $T$, and therefore $T^*\pm i$
  are extensions of $T\pm i$. As already $T\pm i$ is surjective, $T^*$ is a
  proper extension of $T$ if and only if both operators $T^*\pm i$ have non-trivial
  kernel. But for $x\in \ker{T^*+i}$ and $y\in \dom{T}$ we have
  \begin{equation*}
    0=\innerprod{(T^*+i)x,y} = \innerprod{x,(T-i)y},
  \end{equation*}
  and since $T-i$ is surjective, $x=0$. Therefore $T^*=T$, i.e.~the assumption
  implies that $T$ is self-adjoint. Finally \cite[Lemma 9.8]{MR1325694}
  implies that $T$ is also  regular.
\end{proof}

\subsection{Regularity and self-adjointness of twisted Dirac operators}
\label{sec:regularity}

Let $(M,g)$ be a complete Riemannian manifold, {let $S \to M$ be a complex 
Dirac bundle on $M$} and let 
$D\colon\Gamma^{\infty}(M,S)\pf\Gamma^{\infty}(M,S)$  be the corresponding
 Dirac type operator
acting on the sections of $S$, see
\cite[Definition 3.4]{MR1670907} or \cite[Definition II.5.2]{MR1031992}. The
main examples we have in mind are the Dirac operator of a Riemannian spin
manifold, the de Rham operator of a general Riemannian manifold, the signature
operator of an oriented Riemannian manifold, or the Dolbeault
operator of a K\"{a}hler manifold. 

In addition, let $A$ be a unital complex $C^*$-algebra and $E$ a smooth Hilbert $A$-module bundle whose 
fibers are  finitely generated projective Hilbert $A$-modules, equipped with
a metric connection $\nabla^E$. We obtain the twisted Dirac 
operator $D_E$ acting on smooth sections $\Gamma^{\infty}(M,S\otimes
  E)$. Note that the bundle $S\otimes 
E \to M$ inherits the structure of a Hilbert $A$-module bundle so that the  Riemannian
metric on $M$  allows us to define an $A$-valued inner product on the
space $\Gamma^{\infty}_{\text{cpt}}(M,S\otimes E)$ of compactly supported
smooth sections of 
this bundle. This inner product is given by the formula
\[
    \langle s_1,s_2\rangle :=\int_M \langle s_1(x),s_2(x)\rangle_{S_x\otimes
  E_x}\,d\lambda_g(x),
\]
where $\lambda_g$ is the measure associated with
$g$. The corresponding completion is the
Hilbert $A$-module $L^2(M, S\otimes E)$, by definition. 

\begin{thm}
\label{thm:Essential self-adjointness of twisted Dirac operator}
Let $(M,g)$ be a complete Riemannian manifold and let  $(E,\nabla^E)$ be  a  smooth
finitely generated 
projective Hilbert $A$-module bundle with metric connection, then
\begin{equation*}
D_E\colon\Gamma^{\infty}_{\textnormal{cpt}}(M,S\otimes E)\pf\Gamma^{\infty}_{\textnormal{cpt}}(M,S\otimes E)
\end{equation*}
is closable in $L^2(M, S\otimes E)$ and the minimal closure is regular and
self-adjoint as unbounded 
Hilbert $A$-module operator. It is the unique self-adjoint extension of $D_E$.
\end{thm}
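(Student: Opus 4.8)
The plan is to verify condition (ii) of Theorem~\ref{thm:Characterization of self-adjoint, regular operators}, namely that $D_E + i$ and $D_E - i$ both have dense range in $L^2(M,S\otimes E)$; together with closability this will immediately give regularity, self-adjointness, and (via the same characterization applied to any self-adjoint extension) uniqueness. First I would record the \emph{closability}: the classical computation shows that on $\Gamma^\infty_{\mathrm{cpt}}$ the operator $D_E$ is formally self-adjoint with respect to the $A$-valued inner product, the boundary term in Green's formula vanishing because sections are compactly supported and the connection $\nabla^E$ is metric; hence $D_E \subset D_E^*$, so $D_E$ is symmetric and therefore closable, and its minimal closure $\overline{D_E}$ is again symmetric. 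So it remains to prove the dense range statement for $\overline{D_E} \pm i$.

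For the dense range, the key step is a \emph{Friedrichs-type mollification / cutoff argument adapted to the module setting}, exploiting finite propagation and the completeness of $M$. Concretely: to show $(\overline{D_E}+i)(\dom \overline{D_E})$ is dense, it suffices (since $\overline{D_E}+i$ is closed) to show that $\ker(D_E^* - i) = 0$; equivalently, any $u \in L^2(M,S\otimes E)$ with $D_E^* u = i u$ vanishes. Such a $u$ is a weak solution of an elliptic equation, hence (by local elliptic regularity for Dirac-type operators twisted by a finitely generated projective bundle — reducing locally to a $C^n$-valued situation after complementing the fibre $E_x \oplus E_x^\perp \cong A^n$) $u$ is smooth. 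The heart of the matter is then a \emph{global integration-by-parts with exhausting cutoff functions}: choose a proper smooth function $f\colon M \to [0,\infty)$ with $|df| \le 1$ (available by completeness), set $\chi_R = \phi(f/R)$ for a fixed bump $\phi$, and compute $\langle D_E(\chi_R u), \chi_R u\rangle$ versus $\langle \chi_R D_E u, \chi_R u\rangle = i\langle \chi_R u, \chi_R u\rangle$; the commutator $[D_E,\chi_R]$ is Clifford multiplication by $d\chi_R$, of operator norm $O(1/R)$. Taking $R\to\infty$ and using $\|\chi_R u\| \le \|u\| < \infty$ forces the $A$-valued quantity $\langle u,u\rangle$ to have vanishing imaginary part in a sense that, combined with symmetry, yields $u=0$. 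This is precisely the module analogue of the standard proof that the Dirac operator on a complete manifold is essentially self-adjoint (Wolf, Chernoff).

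The \emph{main obstacle} is that several facts that are routine over $\complexs$ require care over a general $C^*$-algebra $A$: (a) one cannot freely take orthogonal complements of submodules, so arguments that implicitly use the projection onto $\ker$ or onto a graph must instead be routed through Theorem~\ref{thm:Characterization of self-adjoint, regular operators}, which is why verifying ``dense range of $\overline{D_E}\pm i$'' is the right target rather than ``self-adjointness'' directly; (b) elliptic regularity and a priori estimates must be stated for operators on sections of Hilbert $A$-module bundles — the clean way is to embed $E$ as a direct summand of a trivial bundle $M\times A^n$, write $D_E$ as compression of $D\otimes \mathrm{id}_{A^n}$, and invoke Sobolev theory for $A^n = A\otimes\complexs^n$-valued functions, which is elementary since it is just $A$-tensored classical Sobolev theory; and (c) the positivity manipulations ($\langle (D_E+i)v,(D_E+i)v\rangle = \langle D_E v, D_E v\rangle + \langle v,v\rangle \ge \langle v,v\rangle$ as elements of $A$) must be interpreted in the ordered Banach space $A$, which is fine since the inner products are genuinely $A$-valued and positive. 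I would therefore structure the proof as: (1) closability and symmetry; (2) local elliptic regularity via the summand embedding, giving smoothness of solutions of $D_E^* u = \pm i u$; (3) the completeness/cutoff estimate forcing $u=0$; (4) conclude dense range, apply Theorem~\ref{thm:Characterization of self-adjoint, regular operators} to get regularity and self-adjointness of $\overline{D_E}$; (5) uniqueness of the self-adjoint extension, again by the characterization theorem applied to an arbitrary self-adjoint extension $S\supset \overline{D_E}$ (so $S^* = S \subset \overline{D_E}^* = \overline{D_E} \subset S$ forces equality).
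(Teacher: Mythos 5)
Your steps (1), (4) and (5) match the paper, and the overall target --- dense range of $\overline{D_E}\pm i$ followed by Theorem~\ref{thm:Characterization of self-adjoint, regular operators} --- is exactly the paper's strategy for the non-compact case. But the heart of your argument, the reduction in step (2) from ``$(\overline{D_E}+i)(\dom{\overline{D_E}})$ is dense'' to ``$\ker(D_E^*-i)=0$'', is precisely the step that fails over a general $C^*$-algebra $A$. It is true that the orthogonal complement of the closure of $\ran(\overline{D_E}+i)$ is $\ker(D_E^*-i)$, but for Hilbert $A$-modules a closed submodule with trivial orthogonal complement need not be dense (take $A=C[0,1]$, $\mcH=A$ and the submodule of functions vanishing at $0$). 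This is exactly the pathology flagged at the start of Section~\ref{regular}, and you name it yourself under obstacle (a) --- yet your proof of dense range is an orthogonal-complement argument in disguise. Your cutoff/integration-by-parts computation in step (3), even if carried out correctly in the ordered-Banach-space sense, only shows that $\ker(D_E^*\mp i)=0$, hence that $\ran(\overline{D_E}\pm i)$ has trivial orthogonal complement; it does not produce a single element of the range close to a given $s\in L^2(M,S\otimes E)$, so density does not follow.

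What is missing is a mechanism for actually \emph{solving} $(\overline{D_E}+i)x=s$, at least approximately. The paper supplies this in two stages that your proposal omits: first, for \emph{compact} $M$ it proves self-adjointness and regularity directly via the Mishchenko--Fomenko pseudodifferential calculus (a parametrix $Q$ with $D_EQ=1-R$, $QD_E=1-T$ yields $\dom{\overline{D_E}}=\im{\overline Q}+\im{\overline T}$, the identity $\overline{D_E}=(D_E)^*$, and complementedness of the graph as the image of a bounded adjointable operator with closed range); by Theorem~\ref{thm:Characterization of self-adjoint, regular operators} this makes $\overline{D_{E'}}+i$ \emph{surjective} on an auxiliary compact manifold $M'$ containing an isometric copy of a neighborhood of $\supp s$. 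Second, the exact solution $x$ on $M'$ is transplanted back by a cutoff $\phi$ with $\norm{\grad\phi}_\infty\le\varepsilon$, and the commutator estimate shows $(\overline{D_E}+i)(\phi x)$ is $\varepsilon\norm{s}$-close to $s$. Some version of this constructive input (or another genuinely module-theoretic surjectivity argument) is indispensable; without it your proof does not close.
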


\begin{proof}
Recall from the proof of \cite[Proposition 3.11]{MR1670907}
that 
$D_E$ with domain equal to $\Gamma_{\mathrm{cpt}}^{\infty}(M,S\otimes E)$ is
symmetric and hence closable.

\smallskip
We first deal with the case when $M$ is compact. In this context,
Mish\-chen\-ko and 
Fomenko~\cite{MR548506} developed a pseudodifferential calculus for
operators on smooth sections of $S\otimes E$ which the
following properties, among others: 

\begin{enumerate}[(i)]
\item The identity is an operator of order $0$ in the pseudodifferential
  calculus. 
\item The operator $D_E$ is an operator of order $1$ in the pseudodifferential
  calculus. 
\item The operator $D_E$ is has a parametrix, i.e.~there are
  operators $Q$ of 
order $-1$ and $R,T$ of order $-\infty$ in the calculus such that $D_EQ=1-R$ and
$QD_E=1-T$.
\item Each operator $P$ of order $\le 0$ in the pseudodifferential calculus
  extends (uniquely) to a bounded adjointable operator $\overline{P}$ on
  $L^2(M, S\otimes E)$.
 \item If $P$ is an operator of order $<0$ in the calculus then its bounded
   adjointable extension is an $A$-compact operator.
\item Each operator in the calculus has a formal adjoint {of the same 
order}, for $D_E$ the 
  formal adjoint is $D_E$.
\item If $P$ is an operator of order $\le 0$ in the calculus, then its adjoint,
 {which is necessarily equal to the adjoint 
  of its closure}, is the closure of its formal adjoint.
\end{enumerate}

Now it turns out that these are exactly  the properties
needed for a proof of regularity and self-adjointness of the
closure of $D_E$, first given, to our knowledge, in~\cite[Proposition
3.4.9]{Vassout}. Alternatively, in~\cite[Section 1]{MR1235284} an
argument due to Skandalis for the statement is sketched which even works in
the case of Lipschitz manifolds. As the thesis \cite{Vassout} is only
available on a university homepage without permanent link, we repeat this
proof here for the reader's convenience.

It is based on the following three properties:
\begin{enumerate}[(i)]
\item\label{item:comp} $\overline{D_E}\circ\overline{Q}= \overline{D_E\circ
    Q}$ and 
  $\overline{D_E}\circ\overline{T} =\overline{D_E\circ T}$, and these operators are bounded
  (hence everywhere defined),
\item\label{item:dom} $\dom{\overline{D_E}}=\im{\overline{Q}}+\im{\overline{T}}$,
\item\label{item:selfad} {$\overline{D_E}=(D_E)^*$}, in particular $\overline{D_E}$ is self-adjoint.
\end{enumerate}

To establish  $\overline{D_EQ}\subset
\overline{D_E}\circ \overline{Q}$ let $x$ be in the domain of $\overline{D_EQ}$. By
definition, this means there are smooth sections $x_n$
converging to $x$ such that $D_EQ(x_n)$ converges to
$y:=\overline{D_EQ}(x)$. Now, $Q$ has negative order and therefore a bounded
closure $\overline{Q}$ and hence $\lim_{n\to\infty}Qx_n=
z=\overline{Q}(x)$. By definition of $\overline{D_E}$, $z$ is in the domain of
$D_E$ and $\overline{D_E}(z)=y$. So, indeed $\overline{D_EQ} \subset
\overline{D_E}\circ\overline{Q}$. Now $D_EQ$ has order zero and
therefore its closure is bounded, in particular everywhere defined, so that
$\overline{D_EQ}$ has no proper extension and therefore we have the required
equality. 

This argument also shows that
$\overline{D_ET}=\overline{D_E}\circ\overline{T}$. This finishes the proof of \eqref{item:comp}.

For \eqref{item:dom}, if $x$ is in the domain of $\overline{D_E}$ then by
definition there are smooth sections $x_n$ such that
$\lim_{n\to\infty}x_n= x$ and $\lim_{n\to\infty}D_Ex_n=
y= \overline{D_E}(x)$. Then $QD_E x_n = x_n - Tx_n$ where $Q$ and $T$ have
continuous closures, and hence, passing to the limits
\begin{equation*}
  \overline{Q}(\overline{D_E}(x)) = x -\overline{T}(x)
\end{equation*}
or in other words $x = \overline{Q}(y)+\overline{T}(x) \in
\im{\overline{Q}}+\im{\overline{T}}$. 

Conversely, \eqref{item:comp} implies that $\im{\overline{Q}}\subset
\dom{\overline{D_E}}$ and $\im{\overline{T}}\subset\dom{\overline{D_E}}$. 

To prove \eqref{item:selfad}, the self-adjointness of the operator
$\overline{D_E}$, recall that by symmetry $\overline{D_E}\subset (D_E)^*$. For
the converse inclusion, recall that $D_EQ = 1-R$ and therefore $(D_EQ)^* =
1-R^*$. For adjoints of compositions one always has $Q^*D_E^*\subset (D_E
Q)^*$. Because $Q^*$ is the closure of the formal adjoint of $Q$, which is
bounded as it is of negative order, $\dom{D_E^*}=\dom{Q^*D_E^*}$ and for $x\in
\dom{D_E^*}$ we have (using $Q^*D_E^*\subset (D_EQ)^*=1-R^*$)
\begin{equation*}
  Q^*(D_E^*(x)) +R^*x= x,
\end{equation*}
so $\dom{D_E^*}\subset \im{Q^*}+\im{R^*}$. 

Taking the formal adjoint of the parametrix equations $D_EQ=1-R$, $QD_E=1-T$,
we see that also the formal adjoint of $Q$ is a parametrix of $D_E$, with
error terms the formal adjoints of $R,T$, {but with the roles of $R$ and $T$ exchanged in the parametrix equations. Because the 
closures of the formal adjoints of $Q$ and $R$ are equal to $Q^*$ and $R^*$ 
by property (vii) of the functional calculus, an argument analogous 
to the one employed for \eqref{item:comp} shows that $\im{Q^*}+\im{R^*} \subset 
\dom{\overline{D_E}} $.  Hence we altogether have  
$\dom{D_E^*}\subset \dom{\overline{D_E}}$.}

\smallskip
It remains to prove the regularity of $\overline{D_E}$, i.e.~we have to show
that its graph is complemented. Write $\mcH$ for the Hilbert module of
$L^2$-sections of $S\tensor E$. By \eqref{item:comp} and \eqref{item:dom},
\begin{equation*}
  G(\overline{D_E}) = \{ (\overline{Q}x +\overline{T}y,
  \overline{D_EQ}x+\overline{D_ET}y)\mid (x,y)\in \mcH\times\mcH\} = U(\mcH\oplus\mcH)
\end{equation*}
where $U\colon\mcH\oplus\mcH\to\mcH\oplus\mcH$ is the bounded adjointable operator with $U(x,y) = (\overline{Q}x+\overline{T}y,
\overline{D_EQ}x+\overline{D_ET}y)$, using that $Q,T,D_EQ,D_E T$ are all
operators of non-positive order. As a graph of a closure, $U(\mcH\oplus\mcH)$ is
closed. By \cite[Theorem 3.2]{MR1325694}, the image of $U$ and hence the graph of $\overline{D_E}$ has an orthogonal complement.

\smallskip

Now we treat the general case where $M$ is complete, but not compact. We
will reduce this case to the compact one.
For this we use Theorem~\ref{thm:Characterization of self-adjoint, regular operators} to show that
$\overline{D_E}$ is self-adjoint and regular. The argument is inspired by the proof of
essential self-adjointness of the untwisted Dirac operator on complete
manifolds in \cite[Proposition 1.3.5]{MR2509837} and by the treatment of
\cite[Lemma 2.1]{MR2670972}. We will use three basic and well-known features:
\begin{enumerate}[(i)]
\item Given a compact submanifold $K\subset M$ with boundary, there exists a
closed Riemannian manifold $(M',g')$ equipped with a finitely generated
projective Hilbert $A$-module bundle $(E',\nabla^{E'})$ with metric connection,
a Dirac bundle $(S',\nabla^{S'})$, a {submanifold} $K'\subset M'$ and a
diffeomorphism $\psi\colon K\to K'$ such that $\psi$ preserves all the
structure (restricted to $K$ and $K'$, respectively). Specifically, $\psi^*g'|_{K'}=g_{|K}$
and $(\psi^*E',\psi^*\nabla^{E'})\cong (E,\nabla^E)_{|K}$ as bundles with
connections and $(\psi^*S',\psi^*\nabla^{S'})\cong (S,\nabla^S)_{|K}$ as Dirac
bundles.  

\item\label{item:cutoff} Because $M$ is complete, for each compact subset $K\subset M$ and each
$\varepsilon>0$ there is a smooth function $\phi\colon M\to [0,1]$ with compact support such that
$\phi_{|K}=1$ and such that $\norm{\grad{\phi}}_{\infty}\leq\varepsilon$.

\item \label{item:Dphicomm}For each smooth function $\phi\colon M\to\mbR$ with compact support,
the commutator of multiplication by $\phi$ and $D_E$ extends to a bounded
operator on $L^2(M, S\otimes E)$ with norm bounded by $\norm{\grad{\phi}}_{\infty}$. 
More precisely, the commutator is given by Clifford multiplication with $\grad\phi$.
\end{enumerate}

Let $s\in \Gamma^{\infty}_{\mathrm{cpt}}(M, S\otimes E)$. For given $\varepsilon>0$ choose a function
$\phi\colon M\to [0,1]$ with compact support which is identically equal to $1$
on the support of $s$ and with $\norm{\grad{\phi}}_{\infty}\leq\varepsilon$. Then choose
a compact Riemannian manifold $(M',g')$ and bundles $(E',\nabla^{E'})$ as well as $(S',\nabla^{S'})$ with an isometry $\psi\colon K\pf K'$
where $K$ is a compact manifold with boundary containing the $1$-neighborhood
of the support of $\phi$.

In the sequel functions and sections with support in $K\subset M$ or
the corresponding set $K'$ in $M'$ will be transported back and forth using this
isometry without further comment. For example, we interchangeably think of $\phi$ as a
function on $M'$ and $s$ as a section of  $S'\otimes E'|_{M'}$.

Because $M'$ is compact the closure of the twisted Dirac operator $D_{E'}$ on $M'$ acting on
sections of $S'\otimes E'$ is already shown to be 
regular and self-adjoint. Hence by Theorem~\ref{thm:Characterization of self-adjoint, regular operators} we can find an 
element $x\in\dom{\overline{D_{E'}}}$ such that $(\overline{D_{E'}}+i)x=s$. We obtain
\[
   \langle s , s \rangle =\innerprod{(\overline{D_E}+i)x,(\overline{D_E}+i)x}=\innerprod{\overline{D_E} x,
\overline{D_E}x} +
\innerprod{x,x} \ge \langle x, x \rangle \in A_{+}.
\]
 Now $\phi\, x$ is defined on $M$ and belongs
to $\dom{\overline{D_E}}$ by \eqref{item:Dphicomm}. Moreover, 
\begin{equation*}
(\overline{D_E}+i)(\phi\, x)= (\overline{D_{E'}}+i)(\phi x) =[\overline{D_{E'}},\phi] x + \phi\,(
\overline{D_{E'}}+i) x
\text{ .} 
\end{equation*}
Here, now $\phi\,(\overline{D_{E'}}+i)x = \phi\,s=s$. On the other hand,
$\norm{[\overline{D_{E'}},\phi]}\leq\norm{\grad{\phi}}_{\infty}\leq\varepsilon$ so that
$\norm{[\overline{D_{E'}},\phi] x} \leq \varepsilon \norm{x}\leq\varepsilon\norm{s}$.

It follows that $s$ lies in the closure of the image of $\overline{D_E} + i$ and
therefore 
that $\overline{D_E} + i$ has dense range as $\Gamma^{\infty}_{\mathrm{cpt}}(M,
S\otimes E)$ is dense in $L^2(M, S\otimes E)$. {In the same 
way it is shown that $\overline{D_E} - i$ has dense range}. This implies the theorem by
Theorem~\ref{thm:Characterization of self-adjoint, regular operators}.
\end{proof}

\section{Positive scalar curvature, partial vanishing, and coarse index}

We now introduce the coarse index of the Dirac operator $D_E$ on a complete spin
manifold $(M,g)$, twisted by a smooth Hilbert $A$-module bundle $E$, and
prove the 
vanishing result Theorem \ref{thm:Vanishing_theorem_intro}. For simplicity we 
will use the notation $D_E$ for the densely defined, self-adjoint and regular 
closure $\overline{D_E}$ of $D_E$, see Theorem \ref{thm:Essential self-adjointness of twisted Dirac operator}.  

\subsection{The coarse index}
\label{subsect:The Roe indicies}

The construction of the index is based on  the functional calculus for regular 
and self-adjoint operators on Hilbert $A$-modules from \cite[Chapter 9 and 10]{MR1325694} 
and \cite[Section 3]{MR1949157}.
We will first recall this functional calculus in a form needed for our purpose.

\begin{thm}[Continuous functional calculus] \label{thm:Continuous functional calculus}
  Let $C(\mbR)$ be the $*$-algebra of continuous complex 
valued functions on $\mbR$. 
Let $T$ be a (possibly unbounded) regular, self adjoint operator on 
the Hilbert $A$-module $\mcH$. Then there is a $*$-preserving linear map 
\[
\pi_{T}\pp C(\mbR)\pf\mathcal{R}_A(\mcH)\,,\,f\mapsto f(T)
\]
with values in the set of regular operators on $\mcH$,  which has the following properties. 
\begin{itemize} 
  \item $\pi_T$ restricts to a $C^*$-algebra homomorphism
$\pi_{T}\pp C_b(\mbR)\pf\mathcal{L}_A(\mcH)$
on the set $C_b(\mbR)$ of bounded complex valued functions on $\mbR$. 
  \item If $|f| \leq |g|$, then $\dom{g(T)} \subset \dom{f(T)}$. 
   \item (Strong continuity) If $(f_n)_{n \in \mbN}$ is a sequence in $C(\mbR)$ which is 
  dominated by $F \in C(\mbR)$, i.e. $|f_n| \leq |F|$ for all $n$, and if $f_n \pf f$
uniformly on compact subsets of $\mbR$, then $\pi_T(f_n) x\pf \pi_T(f) x$ for each $x\in\dom{F(T)}$. 
   \item $\pi_T({\rm Id}) = T$.
     \item If $f \in C_b(\mbR)$ and $F \in C(\mbR)$ is defined by $F(t) = t \cdot f(t)$, then 
  $\dom{T} \subset   \dom{F(T)}$ and for all $x \in \dom{T}$ we have $F(T)x = Tf(T) x = f(T) T x$. 
  If $F$ is bounded, then $\im{f(T)} \subset \dom{T}$, and we have $F(T) = Tf(T)$ as 
  bounded operators on $\mcH$. 
\end{itemize} 

\end{thm}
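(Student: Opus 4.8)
The plan is to reduce the entire statement to the ordinary continuous functional calculus of a single \emph{bounded} adjointable self-adjoint operator and then transport it back along a homeomorphism. First I would pass to the bounded transform $S:=T(1+T^2)^{-1/2}$. By the theory of self-adjoint regular operators in \cite[Ch.~9--10]{MR1325694}, $S$ is a self-adjoint element of $\mathcal{L}_A(\mcH)$ with $\|S\|\le 1$ and $\sigma(S)\subseteq[-1,1]$, the operator $1-S^2=(1+T^2)^{-1}$ has dense range, and $T$ is recovered as the regular operator $S(1-S^2)^{-1/2}$, where $(1-S^2)^{-1/2}$ is obtained by applying the continuous functional calculus of $S$ to the function $s\mapsto(1-s^2)^{-1/2}$ (unbounded near $\pm1$). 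I write $u\colon\mbR\to(-1,1)$, $u(t)=t(1+t^2)^{-1/2}$, for the associated homeomorphism, with inverse $v(s)=s(1-s^2)^{-1/2}$.

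Next I would construct $\pi_T$. For $f\in C_0(\mbR)$ the function $f\circ v$ lies in $C_0\bigl((-1,1)\bigr)$, hence extends by $0$ to $\widetilde f\in C\bigl([-1,1]\bigr)\supseteq C(\sigma(S))$; set $f(T):=\widetilde f(S)$ using the continuous functional calculus of $S$ inside $\mathcal{L}_A(\mcH)$. This gives a $*$-homomorphism $C_0(\mbR)\to\mathcal{L}_A(\mcH)$, and it is nondegenerate because $t\mapsto(1+t^2)^{-1}$ is sent to $1-S^2$, which has dense range; a nondegenerate $*$-homomorphism into $\mathcal{L}_A(\mcH)$ extends uniquely to a unital $C^*$-homomorphism on the multiplier algebra $M(C_0(\mbR))=C_b(\mbR)$, and this extension is $\pi_T|_{C_b(\mbR)}$, giving the bullet about restriction to $C_b(\mbR)$. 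For an arbitrary $f\in C(\mbR)$ I would set
\[
\dom{f(T)}:=\bigl\{x\in\mcH : \exists\,y\in\mcH \text{ with } (gf)(T)x=g(T)y \text{ for all } g\in C_0(\mbR) \text{ with } gf\in C_0(\mbR)\bigr\},
\]
$f(T)x:=y$; this is well defined since the functions $g$ with $gf\in C_0(\mbR)$ still act nondegenerately. Transporting through $u$, $f(T)$ is exactly the operator attached to $f\circ v\in C\bigl((-1,1)\bigr)$ and to $S$ by \cite[Ch.~10]{MR1325694}, so it is regular, self-adjoint iff $f$ is real valued and normal in general, and $f\mapsto f(T)$ is linear and $*$-preserving because the functional calculus of $S$ is; this covers the preamble.

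Then I would dispatch the remaining bullets. The identity $\pi_T(\mathrm{Id})=T$ is immediate since $\mathrm{Id}\circ v=v$ and $v(S)=S(1-S^2)^{-1/2}=T$. For the domination property, note first that for any $f\in C(\mbR)$ the normal operator $f(T)$ satisfies $|f(T)|=(\,(\overline f f)(T)\,)^{1/2}=|f|(T)$ (composition of continuous functions is respected by the calculus), and a normal regular operator $R$ has $\dom R=\dom{|R|}$ by the general theory of regular operators, so $\dom{f(T)}=\dom{|f|(T)}$ for every $f$; it therefore suffices to treat $0\le f\le g$. There $f=\tfrac{f}{1+g}\cdot(1+g)$ with $\tfrac{f}{1+g}\in C_b(\mbR)$, hence $\pi_T\bigl(\tfrac{f}{1+g}\bigr)\,(1+g)(T)\subseteq f(T)$, so $\dom{(1+g)(T)}\subseteq\dom{f(T)}$, and $\dom{(1+g)(T)}=\dom{g(T)}$ because $(1+g)(T)=1+g(T)$ is a bounded perturbation. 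For the last bullet, the multiplicativity $\pi_T(f)\pi_T(g)=\pi_T(fg)$ on $C_b(\mbR)$ is the $C^*$-homomorphism property, and from the definition of $\pi_T$ on unbounded functions one reads off the affiliation relation $T\,\pi_T(f)=\pi_T(\mathrm{Id}\cdot f)$ whenever $\mathrm{Id}\cdot f$ is bounded, together with $\im{\pi_T(f)}\subseteq\dom T$ in that case; restricting the same computation to $\dom T$ (and using $\dom T\subseteq\dom{F(T)}$, which follows from $|\mathrm{Id}\cdot f|\le\|f\|_\infty|\mathrm{Id}|$ and the domination property just proved) yields $F(T)x=Tf(T)x=f(T)Tx$ for $x\in\dom T$.

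Finally, strong continuity, which I expect to be the main obstacle along with making the unbounded part of the construction fully rigorous. Let $|f_n|\le|F|$, $f_n\to f$ uniformly on compacta, and $x\in\dom{F(T)}=\dom{(1+|F|)(T)}$. Put $\phi_n:=f_n/(1+|F|)$ and $\phi:=f/(1+|F|)$; these lie in $C_b(\mbR)$, are uniformly bounded by $1$, and $\phi_n\to\phi$ uniformly on compacta. With $w:=(1+|F|)(T)x\in\mcH$ one has $\pi_T(\phi_n)w=f_n(T)x$ and $\pi_T(\phi)w=f(T)x$ by the multiplicativity above. For $w$ of the form $w=\pi_T(k)w'$ with $k\in C_0(\mbR)$, $w'\in\mcH$ (such $w$ spanning a dense submodule by nondegeneracy), one checks $\phi_n k\to\phi k$ in the supremum norm — here $k\in C_0(\mbR)$ together with the uniform bound on $\phi_n$ and uniform convergence on compacta is what is needed — so $\pi_T(\phi_n)w=\pi_T(\phi_n k)w'\to\pi_T(\phi k)w'=\pi_T(\phi)w$; since $\|\pi_T(\phi_n)\|\le 1$ for all $n$, this convergence extends to all $w\in\mcH$, giving $f_n(T)x\to f(T)x$. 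The genuinely delicate points are thus the regularity of $f(T)$ for unbounded $f$, for which the machinery of \cite[Ch.~10]{MR1325694} (bounded transform, dense-range conditions, polar decomposition of normal regular operators) is indispensable, and the bookkeeping in the strong-continuity step; the algebraic relations are routine once one is allowed to compute inside $C\bigl([-1,1]\bigr)$, $C_b(\mbR)=M(C_0(\mbR))$, and $\mathcal{L}_A(\mcH)$.
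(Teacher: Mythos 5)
Your proposal is correct in outline and follows exactly the route on which the paper relies: the paper gives no proof of this theorem but recalls it from \cite[Chapters 9 and 10]{MR1325694} and \cite[Section 3]{MR1949157}, and those sources proceed precisely as you do, via the bounded transform $S=T(1+T^2)^{-1/2}$, the nondegenerate representation of $C_0(\mbR)$ and its extension to the multiplier algebra $C_b(\mbR)$, and the affiliation-type definition of $f(T)$ for unbounded $f$. The points you flag as delicate (regularity of $f(T)$ for unbounded $f$, the identity $\dom{f(T)}=\dom{|f|(T)}$ via polar decomposition, and the domination/strong-continuity bookkeeping) are exactly the parts for which the cited machinery is needed, and your reductions there are sound.
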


Let $(M,g)$ be a complete Riemannian spin manifold and $(E,\nabla)$ a
smooth Hilbert $A$-module bundle on $M$ with metric connection and with
finitely generated projective fibers.
We will now define the coarse index $\ind(D_E)$, following {\cite{Roe_AMS}. It is an element of $ K_*( 
C^*(M;A))$, the $K$-theory of the  coarse $C^*$-algebra  $C^*(M;A)$ of $M$ with
coefficients in $A$. This $C^*$-algebra  was introduced in \cite{MR1451755} and its definition will 
be recalled shortly. 
The definition of $\ind(D_E)$ 
 uses 
the functional calculus for self-adjoint (unbounded) Hilbert $A$-module
operators in  Theorem \ref{thm:Continuous functional
  calculus}.}

We will work with the Hilbert $A$-module $\mcH:=L^2(M,S\tensor E)$, on which
$C_0(M)$, the $C^*$-algebra of all complex valued continuous functions
  on $M$ vanishing at infinity, acts by pointwise multiplication. The
corresponding representation is denoted 
$\rho\colon C_0(M)\to \mathcal{L}_A(\mcH)$. The following definition generalizes the 
corresponding notions from \cite[Chapter 3]{Roe_AMS} to the Hilbert $A$-module $\mcH$. 

\begin{definition} Let $T \in \mathcal{L}_A(\mcH)$.
\begin{itemize} 
   \item $T$ is \emph{locally compact} if
  $T\circ\rho(f)$ and $\rho(f)\circ T$ are $A$-compact operators for all $f\in
  C_0(M)$.
  \item $T$ is called \emph{pseudolocal} if the commutator  $[T,\rho(f)]$ is
  $A$-compact for any $f\in C_0(M)$. 
  \item $T$ has \emph{finite
    propagation} if there exists $R>0$ such that $\rho(f)\circ T\circ\rho(g)$
  vanishes for all $f,g\in C_0(M)$ with $d(\supp{f},\supp{g})\geq R$.  In this case 
  we say that $T$ has {\em propagation bounded by $R$}.
  \item The
  \emph{Roe $C^*$-algebra} associated with $\rho$ is the sub-$C^*$-algebra of
  $\mcL_A^*(\mcH)$ generated by all locally compact operators with finite
  propagation. It  will be denoted by $ C^*(M; A)$.
  \item  If $X\subset M$ is closed, we define $C^*(X\subset M;A)$ as  the closed
  ideal of $C^*(M;A)$ generated by locally compact operators $T$ of finite
  propagation which are \emph{supported near $X$}, i.e.~such that there is
  $R>0$ with $T\rho(f)=0$ and $\rho(f) T=0$ for all $f\in C_0(M)$ with
  $d(\supp{f},X)\geq R$.
\end{itemize}
\end{definition}

\begin{remark}\label{rem:functoriality}
  We suppress the dependence on the bundle $S\tensor E$ in the notation
  $C^*(M;A)$. This is justified by the following functoriality results
  \cite[Lemma 5.4, Proposition 5.5]{MR1451755}.

  For any Lipschitz map $f\colon M\to N$ and Hilbert $A$-module bundles $E\to
  M$, $F\to N$ so that the fibers of $F \to N$ are large enough  - adding a trivial
  bundle will always suffice if $M$ has positive dimension - there are
  canonical $C^*$-algebra homomorphisms
  $f_*\colon C^*(M;A)\to C^*(N;A)$, {obtained by  conjugation with an isometry
  between the Hilbert $A$-modules of sections of these bundles. 
  The induced map on K-theory is functorial in $f$.} For $f=\id_M\colon M\to M$ we can arrange that $f_*$ is an isomorphism, if the fibers of $E \to M$ and $F \to N$ are large enough. 

\end{remark}

\begin{proposition}\label{prop:wave}
  Using the functional calculus of Subsection \ref{thm:Continuous functional
    calculus}  we define the wave operator group
  $\{\exp(isD_E)\}_{s\in\mbR}$ which consists of unitary
  operators. It satisfies
  the wave equation: for $u\in\dom{D_E}$,
  \begin{equation*}
    \frac{d}{ds} \exp(is D_E) u = iD_E \exp(is D_E) u.
  \end{equation*}
Moreover, each $\exp(isD_E)$ is a finite
  propagation operator with propagation $\abs{s}$.
\end{proposition}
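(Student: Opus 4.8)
The plan is to verify the three assertions — existence of a unitary group, the wave equation, and the finite propagation bound — essentially in that order, reducing the first two to the functional calculus of Theorem \ref{thm:Continuous functional calculus} and spending the real effort on the propagation estimate. For the group structure, I would apply the continuous functional calculus to the bounded functions $f_s(t) = e^{ist}$: since $|f_s| = 1$ is bounded, each $\exp(isD_E) := f_s(D_E)$ lies in $\mathcal{L}_A(\mcH)$, and since $\pi_{D_E}$ restricts to a $C^*$-algebra homomorphism on $C_b(\mbR)$, the relations $f_s \overline{f_s} = 1$ and $f_s f_{s'} = f_{s+s'}$ pass directly to $\exp(isD_E)^* \exp(isD_E) = 1 = \exp(isD_E)\exp(isD_E)^*$ and the group law. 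Strong continuity in $s$ (hence that this is genuinely a strongly continuous one-parameter group) follows from the strong continuity clause of Theorem \ref{thm:Continuous functional calculus}, applied to $f_{s_n} \to f_s$ uniformly on compacta with common dominating function $F \equiv 1$.

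For the wave equation, fix $u \in \dom{D_E}$. I would first record from the last bullet of Theorem \ref{thm:Continuous functional calculus} (applied with $f = f_s$ and $F(t) = t f_s(t)$, which is unbounded but continuous) that $\im{f_s(D_E)} \subset \dom{D_E}$ is false in general — rather one uses $\dom{D_E} \subset \dom{F(D_E)}$ and $F(D_E)u = D_E f_s(D_E) u = f_s(D_E) D_E u$. Then I would compute the difference quotient $\tfrac{1}{h}(\exp(i(s+h)D_E) - \exp(isD_E))u = \exp(isD_E)\cdot \tfrac{1}{h}(\exp(ihD_E) - 1)u$ and show $\tfrac{1}{h}(e^{ihD_E}-1)u \to iD_E u$ as $h \to 0$; this is a scalar fact about the functions $g_h(t) = (e^{iht}-1)/h$, which converge to $it$ uniformly on compacta and are dominated by $g(t) = |t|$ (since $|e^{iht}-1| \le |ht|$), so strong continuity on $\dom{g(D_E)} \supset \dom{D_E}$ gives $g_h(D_E)u \to iD_E u$. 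Composing with the bounded operator $\exp(isD_E)$ and using $\exp(isD_E) D_E u = D_E \exp(isD_E) u$ (which follows from commutation of functions of $D_E$, again via Theorem \ref{thm:Continuous functional calculus}) yields the stated identity; one also needs $\exp(isD_E)u \in \dom{D_E}$, which holds because $D_E$ commutes with $f_s(D_E)$ in the appropriate sense.

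The main obstacle, and the heart of the proposition, is the finite propagation bound: $\rho(f)\exp(isD_E)\rho(g) = 0$ whenever $d(\supp f, \supp g) > |s|$. The standard argument — due to Roe in the scalar case — is a unique-continuation / energy-estimate argument for the wave equation, which I would adapt here. Given $g \in C_0(M)$ with support in a set $Y$, and $u = \rho(g)v$ for $v \in \mcH$ smooth and compactly supported, set $u_s = \exp(isD_E)u$; the claim is that $u_s$ is supported in the $|s|$-neighborhood $Y_{|s|}$ of $Y$. One shows this by a Gronwall-type estimate: for a point $x$ at distance $> |s|$ from $Y$, pick a cutoff $\chi$ that is $1$ near $x$ and supported away from the backward light cone, differentiate $\langle \chi u_s, \chi u_s\rangle$ in $s$, and use that $\frac{d}{ds}u_s = iD_E u_s$ together with the first-order nature of $D_E$ (finite propagation speed $1$ encoded in the principal symbol being Clifford multiplication, of operator norm $1$ in each covector direction) to bound the derivative by the energy already outside the cone, which is zero at $s=0$. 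The one genuinely new point relative to the Hilbert-space case is that all inner products are $A$-valued; but the estimates are of the form $\langle \cdot,\cdot\rangle \le \langle \cdot,\cdot\rangle$ in the order on $A_+$, and the real-valued Gronwall argument goes through after passing to norms, using positivity of the $A$-valued inner product and the self-adjointness of $D_E$ established in Theorem \ref{thm:Essential self-adjointness of twisted Dirac operator}. Passing from the dense set of such $u$ to all of $\mcH$, and from pointwise support statements to the operator identity $\rho(f)\exp(isD_E)\rho(g)=0$, is then routine. I would cite \cite[Section 7]{Roe_AMS} or the analogous finite-propagation-speed lemma there as the template, indicating the (minor) modifications needed for the Hilbert $A$-module setting.
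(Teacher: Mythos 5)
Your proposal is correct and follows essentially the same route as the paper: unitarity and the group law from the $C^*$-homomorphism property of the functional calculus on $C_b(\mbR)$, the wave equation from the dominated strong-continuity clause applied to the difference quotients (the paper bounds them directly by $1+\abs{t}$ rather than factoring out $\exp(isD_E)$, an immaterial difference), and the propagation bound by transporting the energy-estimate argument of Higson--Roe, Proposition 10.3.1, to the Hilbert $A$-module setting. The paper is merely terser on the last point, citing that proof and noting it uses only the wave equation, elementary functional calculus, and $[D_E,\rho(g)]=c(\grad g)$, exactly the ingredients you identify.
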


\begin{proof} Because the function $t \pf \exp(ist)$ is bounded on $\mbR$, the operators $\exp(isD_E)$ 
are bounded adjointable and unitary by the properties of the functional
calculus Theorem \ref{thm:Continuous functional calculus}.

   For fixed $s\in\reals$,  $\big( \exp(i(s+h)t) -\exp(ist) \big) /h$ converges to $i{t}
   \exp(ist)$ uniformly for $t\in [-R,R]$ for each $R$ and with a uniform
   bound of the difference quotients by $\abs{1+t}$. The claim about the wave
   equation then follows from   the strong continuity property in Theorem \ref{thm:Continuous
     functional calculus}. 

   The unit propagation property is a standard fact which follows from a
   priori energy estimates. The proof given in \cite[Proposition
   10.3.1]{MR1817560} only uses properties of the wave equation, some elementary 
   properties of the functional calculus and the fact that, for a smooth
   function $g\colon M\to \reals$, the commutator $[D_E,\rho(g)]$ is equal to 
   Clifford multiplication with the gradient of $g$. It therefore 
   generalizes     immediately  from the case of  operators on
   Hilbert spaces treated in \cite[Proposition 10.3.1]{MR1817560} to the
   unbounded operator $D_E$ on the Hilbert $A$-module $\mcH$.
\end{proof}

\begin{defi}
An odd function $\chi\in C(\mbR)$ is called \emph{normalizing function} if
$\chi(t)\pf\pm 1$ as $t\pf\pm\infty$.  
\end{defi}

\begin{lem}
\label{lem:Functional_calculus_via_wave_group} For the functional calculus of
the regular self-adjoint operator $D_E$ the following assertions hold. 
\begin{itemize} 
   \item[a)] 
For any $\varphi\in C^{\infty}(\mbR)\cap L^1(\mbR)$ with $\hat\varphi\in
C_{\mathrm{cpt}}^{\infty}(\mbR)$, one has
\begin{equation*}
\label{eq:Integral_representation}
\varphi(D_E)
u=\frac{1}{2\pi}\int_{\mbR}\hat{\varphi}(s)\,
\exp(isD_E)u\,ds 
\end{equation*}
for all compactly supported smooth sections
$u\in\Gamma_{\mathrm{cpt}}^{\infty}(M,S \otimes E)$. Further, the operator 
 $\varphi(D_E)$ is locally compact and of  finite propagation.
\item[b)] For arbitrary $\varphi \in C_0(M)$ we have $\varphi(D_E)\in
C^*(M;A)$.
\item[c)] If  $\chi\in C_b(\reals)$ is a normalizing function then $\chi(D_E)$ is a norm limit 
of bounded, self-adjoint finite propagation operators.
\end{itemize} 
\end{lem}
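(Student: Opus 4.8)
The plan is to prove a), b), c) in turn, taking the integral representation of $\varphi(D_E)$ over the wave group of Proposition~\ref{prop:wave} as the basic tool and reducing every $A$-compactness assertion to the compact case by the localization device used in the proof of Theorem~\ref{thm:Essential self-adjointness of twisted Dirac operator}. For a), I would begin from Fourier inversion, $\varphi(t)=\frac{1}{2\pi}\int_{\reals}\hat\varphi(s)e^{ist}\,ds$, the integrand being supported in a compact interval $[-R,R]\supseteq\mathrm{supp}\,\hat\varphi$. Riemann sums of this integral are uniformly bounded by a constant function and converge to $\varphi$ uniformly on compact subsets of $\reals$, so the strong-continuity clause of Theorem~\ref{thm:Continuous functional calculus} yields $\frac{1}{2\pi}\sum_j\hat\varphi(s_j)\exp(is_jD_E)u\,\Delta s_j\to\varphi(D_E)u$ for all $u\in\mcH$; as $s\mapsto\exp(isD_E)u$ is norm continuous these sums also converge to the Bochner integral $\frac{1}{2\pi}\int_{\reals}\hat\varphi(s)\exp(isD_E)u\,ds$, which proves the displayed formula (in fact for all $u\in\mcH$, not only for compactly supported smooth $u$). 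Pulling $\rho(f)$ and $\rho(g)$ into the integral and using the unit propagation of $\exp(isD_E)$, one gets $\rho(f)\varphi(D_E)\rho(g)=\frac{1}{2\pi}\int_{-R}^{R}\hat\varphi(s)\rho(f)\exp(isD_E)\rho(g)\,ds=0$ whenever $d(\mathrm{supp}\,f,\mathrm{supp}\,g)\ge R$, so $\varphi(D_E)$ has propagation $\le R$.

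For local compactness of $\varphi(D_E)$, since the $A$-compact operators are closed and $C_{\mathrm{cpt}}(M)$ is dense in $C_0(M)$ it suffices to show $\rho(f)\varphi(D_E)$ is $A$-compact for $f\in C^{\infty}_{\mathrm{cpt}}(M)$; and, $\varphi(D_E)$ having finite propagation and $M$ being complete, one has $\rho(f)\varphi(D_E)=\rho(f)\varphi(D_E)\rho(g)$ for some $g\in C^{\infty}_{\mathrm{cpt}}(M)$ that is identically $1$ on a compact neighbourhood of $\mathrm{supp}\,f$ chosen large enough that its $R$-neighbourhood lies in a compact submanifold-with-boundary $K\subseteq M$. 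Passing to a closed Riemannian manifold $M'$ with bundles $(S',\nabla^{S'})$, $(E',\nabla^{E'})$ agreeing with the given data near $K$, exactly as in the proof of Theorem~\ref{thm:Essential self-adjointness of twisted Dirac operator}, finite propagation speed gives $\rho(f)\exp(isD_E)\rho(g)=\rho(f)\exp(isD_{E'})\rho(g)$ for $|s|\le R$ and hence, by the integral formula, $\rho(f)\varphi(D_E)\rho(g)=\rho(f)\varphi(D_{E'})\rho(g)$. On the compact manifold $M'$ the function $\varphi$ is of Paley--Wiener type, in particular Schwartz, so writing $\psi_N(t)=(1+t^2)^N\varphi(t)\in C_0(\reals)$ we have $\varphi(D_{E'})=(1+D_{E'}^2)^{-N}\psi_N(D_{E'})$ with $\psi_N(D_{E'})$ bounded adjointable; and $(1+D_{E'}^2)^{-1}$ — being the inverse of the elliptic order-two operator $1+D_{E'}^2$ of the Mishchenko--Fomenko calculus — is $A$-compact by the usual parametrix argument together with property (v), so the same holds for its powers. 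Thus $\varphi(D_{E'})$, and therefore $\rho(f)\varphi(D_E)=\rho(f)\varphi(D_{E'})\rho(g)$, is $A$-compact; the statement for $\varphi(D_E)\rho(f)$ follows by applying this to $\overline{\varphi}$ and passing to adjoints.

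For b) — where the intended hypothesis is $\varphi\in C_0(\reals)$ — the set $\mathcal{A}$ of functions satisfying the hypotheses of a) is a conjugation-closed subalgebra of $C_0(\reals)$ (it is closed under products since $\widehat{\varphi_1\varphi_2}=\frac{1}{2\pi}\hat\varphi_1*\hat\varphi_2$ is again compactly supported), separating points and vanishing nowhere, hence dense in $C_0(\reals)$ by Stone--Weierstrass; since $\pi_{D_E}$ is a $*$-homomorphism on $C_b(\reals)$, hence contractive, any $\varphi\in C_0(\reals)$ is a norm limit of the $\varphi_n(D_E)\in C^*(M;A)$ with $\varphi_n\in\mathcal{A}$, so $\varphi(D_E)\in C^*(M;A)$. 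For c), I would first exhibit one normalizing function $\chi_0$ whose functional calculus is manifestly a bounded self-adjoint finite propagation operator — for instance $\chi_0=\mathrm{sgn}*\phi$ with $\phi$ an even Schwartz function with $\hat\phi\in C^{\infty}_{\mathrm{cpt}}(\reals)$ and $\int\phi=1$, so that $\widehat{\chi_0}=\widehat{\mathrm{sgn}}\cdot\hat\phi$ is a compactly supported distribution. Mollifying $\widehat{\chi_0}$ produces $\varphi_n\in\mathcal{A}$ with all $\widehat{\varphi_n}$ supported in one fixed compact interval and with $\varphi_n=\chi_0\cdot w_n$, where $w_n\to 1$ uniformly on compacta and $|w_n|$ is uniformly bounded; strong continuity gives $\varphi_n(D_E)u\to\chi_0(D_E)u$ for all $u$, and since by a) the $\varphi_n(D_E)$ have propagation bounded by the same constant, so does $\chi_0(D_E)$, which is moreover bounded adjointable and self-adjoint because $\chi_0\in C_b(\reals)$ is real-valued. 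Finally, for an arbitrary normalizing function $\chi$ the difference $\chi-\chi_0$ lies in $C_0(\reals)$, so $(\chi-\chi_0)(D_E)$ is a self-adjoint element of $C^*(M;A)$ by b), hence a norm limit of self-adjoint finite-propagation operators, and adding $\chi_0(D_E)$ displays $\chi(D_E)$ as a norm limit of bounded self-adjoint finite propagation operators.

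The step I expect to be the main obstacle is the local compactness assertion in a): unlike in the scalar case one cannot invoke the Rellich lemma directly, so one must route through the compact model, which rests both on a careful use of finite propagation speed of the wave equation to replace $\varphi(D_E)$ by $\varphi(D_{E'})$ locally and on the $A$-compactness of negative-order operators in the Mishchenko--Fomenko calculus on $M'$. Parts b) and c) are then comparatively formal; the one delicate point in c) is the justification of the wave-group representation for $\chi_0$, whose Fourier transform is only a compactly supported distribution rather than a compactly supported smooth function.
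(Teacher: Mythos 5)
Your argument is correct. Parts a) and b) follow essentially the same route as the paper: the integral representation via Riemann sums, Fourier inversion and the strong-continuity clause of the functional calculus; finite propagation from the unit propagation of the wave group; local compactness by transplanting $\rho(f)\varphi(D_E)\rho(g)$ to a compact model $M'$ via finite propagation speed and then invoking $A$-compactness of negative-order operators in the Mishchenko--Fomenko calculus (you factor through $(1+D_{E'}^2)^{-1}$ of order $-2$, whereas the paper uses the order $-1$ parametrix of $D_{E'}$ itself and the boundedness of $t\mapsto t\varphi(t)$ --- an immaterial difference); and b) by the density of your algebra $\mathcal{A}$ in $C_0(\reals)$ (the paper's ``$C_0(M)$'' in the statement is indeed a typo for $C_0(\reals)$). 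Part c), however, is genuinely different. The paper fixes the normalizing function $f(x)=x/\sqrt{1+x^2}$, writes $f(x)=x\,g(x)$ with $g(x)=(1+x^2)^{-1/2}$, truncates the Fourier transform of $g$ --- a modified Bessel function whose smoothness and decay away from the origin require a separate lemma --- to produce $g_n$ with $\lVert xg_n-xg\rVert_\infty\to 0$, and exhibits $f(D_E)$ as the norm limit of the finite-propagation operators $D_Eg_n(D_E)$ (using that $D_E$ itself has propagation zero). You instead manufacture a normalizing function $\chi_0=\mathrm{sgn}*\phi$ whose Fourier transform is already a compactly supported distribution, show by mollification and strong convergence that $\chi_0(D_E)$ is a \emph{single} bounded self-adjoint operator of finite propagation, and reduce the general $\chi$ to b) via $\chi-\chi_0\in C_0(\reals)$. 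Your route buys the elimination of the Bessel-function analysis (the paper's Lemma \ref{lem:FT}) and of the composition $D_Eg_n(D_E)$ of an unbounded operator with a bounded one, at the cost of a small distribution-theoretic computation of $\widehat{\chi_0}$; and, as you note, you never need the integral representation for $\chi_0$ itself, only for its smooth approximants, so the ``delicate point'' you flag at the end does not actually arise.
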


\begin{proof} For a) we first assume that $\supp{\hat\varphi}\subset [-R,R]$ for $R>0$.
By Theorem~\ref{thm:Continuous functional
  calculus}  the compactly supported integrand is 
continuous and the  integral is defined as a limit of Riemann
sums. Moreover we have  $\varphi(t) = \frac{1}{2\pi}\int_\reals
\hat\varphi(s)\exp(ist)\;ds$, again as a limit of Riemann sums (and uniformly
for $t$ in compact subsets of $\reals$), by the Fourier inversion
theorem. The equation in a) now follows from the continuity statement in 
Theorem \ref{thm:Continuous functional calculus}.

That $\varphi(D)$ has finite propagation is an immediate consequence of the
integral 
representation of this operator, since the wave operators
$\exp(isD_E)$ have finite propagation $\abs{s}$. 

To prove local compactness of $\varphi(D_E)$, let $f$ be a compactly supported
smooth function on $M$. Note that the propagation of $\varphi(D_E)$ is
bounded by $R$. Let $g$ be a compactly supported smooth function which is
identically equal to $1$ on the $R$-neighborhood of the support of $f$. Then
$\varphi(D_E)\rho(f) =\rho(g)\varphi(D_E)\rho(f)$ and 
$\rho(f)\varphi(D_E)=\rho(f)\varphi(D_E)\rho(g)$.

Next, as in Subsection \ref{sec:regularity}, when reducing from complete to
compact manifolds, we can find an isometry of a suitable neighborhood of
$\supp{g}$ with target a suitable subset of a compact manifold $M_1$, covered
by an isometry of $E$ to a Hilbert $A$-module bundle $E_1$ on $M_1$, when
both bundles are restricted to the respective subsets of $M$ and $M_1$. 

{This induces an isometry which conjugates $\rho(g) \varphi(D_E)\varphi(f)$ to
the corresponding operator $\rho(g_1)\varphi(D_{E_1})\rho(f_1)$ on $M_1$. This 
assertion uses the integral representation and the fact that the family $\exp(isD_E)
u$ on $M$ is conjugated to the corresponding family $\exp(is D_{E_1}) u_1$ on 
$M_1$ as long as $\supp{u}\subset \supp{f}$ and $\abs{s}\le R$. Here we observe 
that the latter
 is the unique  solution of the wave equation for a 
given initial function $u$, which follows immediately from the a priori energy
estimates for 
the wave operator mentioned in the proof of Proposition \ref{prop:wave}.}

Now we use the parametrix $Q_1$ for $D_{E_1}$ of order $-1$ with remainder
$R_1$ of order $-\infty$ such that $D_{E_1}Q_1 = 1-R_1$. {Composing with 
the bounded operator  $\varphi(D_E)$ from the left leads to the equation} 
\begin{equation*}
  \varphi(D_{E_1}) = (\varphi(D_{E_1})D_{E_1}) \circ Q_1 + \varphi(D_{E_1})
  \circ R_1. 
\end{equation*}
Here $\varphi(D_{E_1})D_{E_1}$ {which is defined a priori 
only on $\dom{D_{E_1}}$}, can be extended to a bounded operator on $\mcH$, as
$t \mapsto t \varphi(t)$ is bounded, and $Q_1,R_1$ are $A$-compact because they
are of negative order in the
pseudodifferential calculus on $M_1$. Consequently, since
the $A$-compact operators are an ideal in the bounded operators also
$\rho(g_1)\varphi(D_{E_1}) \rho(f_1)$ and its conjugate
$\rho(g)\varphi(D_E)\rho(f)$ are $A$-compact. The same argument implies
that $\rho(f)\varphi(D_E)\rho(g)$ is $A$-compact.

The claim about arbitrary $\varphi\in C_0(M)$ follows from the usual
{density} argument.

{We now prove c).} The function $f(x)=\frac{x}{\sqrt{1+x^2}}$ is a normalizing
function, any 
other such function $\chi$ satisfies $\chi-f\in
C_0(\reals)$. {Because of b) it} suffices to prove the statement for $f$. Now,
$f(x)=xg(x)$ with $g(x)=(1+x^2)^{-1/2}$. We construct a sequence of {bounded} continuous functions $g_n$ such that the functions $x \mapsto x g_n(x)$ are also bounded and
\begin{enumerate}[(i)]
\item $\lim_{n\to \infty}\norm{xg_n(x)-xg(x)}_\infty =0$. 
\item $g_n$ has a smooth Fourier transform with compact support.
\end{enumerate}

Then the sequence of bounded operators 
 $D_E g_n(D_E)$ converges by the functional calculus in norm to $D_E
 g(D_E)=f(D_E)$. Moreover, $g_n(D_E)$ has finite propagation exactly by
the same Fourier inversion argument which showed that $\varphi(D_E)$ has finite
propagation. As $D_E$ itself has propagation $0$, the composition $D_E
g_n(D_E)$ also has finite propagation. {Hence assertion c) holds 
with $\chi$ replaced by $f$.} 

To construct $g_n$, consider first the Fourier transform $\hat{g}(\xi)$. This
is, up to a constant, the modified Bessel function $K_0(\abs{\xi})$ of the
second kind \cite[p.~376]{Abramowitz}. The following Lemma
\ref{lem:FT} shows that this function is {square integrable},
smooth outside $0$ and {of Schwartz type as
$\xi\to\pm\infty$, meaning that} $\lim_{\abs{\xi}\to\infty}\abs{\xi^k 
  \frac{d^l\hat{g}}{d\xi^l}(\xi)}=0$ for all $k,l$. Choose smooth cutoff
functions $\phi_n\colon \reals\to [0,1]$
with {$\norm{\phi^{(k)}_n}_\infty \le 1$} for $k=0,1$ and such that 
$\phi_n(\xi)=1$ for $\abs{\xi}\le n$. Set $\hat{g}_n = \phi_n\hat g$ {and let $g_n$ be 
the Fourier transform of $\hat{g}_n$. Being equal to  the convolution of the $L^2$-function $g$ and 
the {Schwartz} function $\hat{\phi}_n$ the function $g_n$ is
bounded and continuous.
}
We obtain
\begin{equation*}
 \abs{x (g(x)-g_n(x))} \le \int_{\reals} \abs{(\hat
     g-\hat{g}_n)'(\xi)}\,d\xi \le \int_{\abs{\xi}\ge n}
 \abs{(\hat g(1-\phi_n))'(\xi)}
 \;d\xi\xrightarrow{n\to\infty} 0 
\end{equation*}
as $\hat g(\xi)$ is rapidly decreasing for $\abs{\xi}\to\infty$. {Furthermore, the last
inequality also shows that $x \mapsto x g_n$ is bounded for all $n$}. 
\end{proof}

\begin{lemma}\label{lem:FT}
  Set $g(x)=\frac{1}{\sqrt{1+x^2}}$. Then $g\in L^2(\reals)$ and its Fourier
  transform $\hat{g}$ has the following properties:
  \begin{enumerate}
  \item For each $k>0$ and each $0\le l<k$ the function $\frac{d^l}{d\xi^l}
    (\xi^k \hat g(\xi))$ belongs to $L^2(\reals)$.
  \item The restriction of $\hat{g}$ to $\reals\setminus\{0\}$ is smooth
  \item The restriction of $\xi^k \frac{d^l}{d\xi^l}
    \hat g$ to $\reals\setminus (-1,1)$ is bounded for each
    $k,l\in\naturals$. 
  \end{enumerate}

\end{lemma}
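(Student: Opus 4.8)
The plan is to analyze $\hat g$ via its well-known identification with a modified Bessel function and then read off the three claimed properties from the standard asymptotics and smoothness of $K_0$. First I would recall that, with a suitable normalization of the Fourier transform, $\widehat{(1+x^2)^{-1/2}}(\xi) = c\,K_0(|\xi|)$ for a positive constant $c$, where $K_0$ is the modified Bessel function of the second kind; this is classical (e.g.\ \cite[p.~376]{Abramowitz} as already cited). Since $g(x)=(1+x^2)^{-1/2}$ is continuous, even, and $O(|x|^{-1})$, it lies in $L^2(\reals)$ but not in $L^1(\reals)$, so some care is needed to interpret $\hat g$ as a tempered distribution; however, away from $0$ the Bessel identity shows $\hat g$ is a genuine smooth function, which already gives property~(2).

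Next I would invoke the standard asymptotic expansion of $K_0$: as $\xi\to +\infty$, $K_0(\xi)\sim \sqrt{\pi/(2\xi)}\,e^{-\xi}(1+O(1/\xi))$, and every derivative $K_0^{(l)}(\xi)$ obeys the same type of bound, i.e.\ $|K_0^{(l)}(\xi)| \le C_l e^{-\xi/2}$ for $\xi\ge 1$, say. Because $e^{-\xi/2}$ decays faster than any inverse power of $\xi$, this immediately yields property~(3): $\xi^k \frac{d^l}{d\xi^l}\hat g(\xi)$ is bounded — indeed goes to $0$ — on $\reals\setminus(-1,1)$, for all $k,l$, using evenness of $\hat g$ to handle $\xi\to-\infty$. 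For property~(1) I would likewise estimate $\frac{d^l}{d\xi^l}(\xi^k \hat g(\xi))$ by the Leibniz rule: on $|\xi|\ge 1$ the same exponential decay makes this function square integrable with room to spare, while on $|\xi|\le 1$ the only issue is the behaviour of $\hat g$ and its derivatives near $0$, where $K_0$ has a logarithmic singularity: $K_0(\xi) = -\log(\xi/2) - \gamma + O(\xi^2\log\xi)$, and more generally $K_0^{(l)}(\xi)$ has at worst a $\xi^{-l}$ (times logarithm) singularity at the origin. Multiplying by $\xi^k$ with $k>l$ kills this singularity, leaving a function that is bounded near $0$, hence in $L^2$ of the bounded interval $[-1,1]$. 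Combining the two regions gives~(1).

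Concretely the steps are: (a) establish the Bessel identity $\hat g(\xi)=c\,K_0(|\xi|)$, being precise about normalization conventions and the tempered-distribution interpretation for this non-$L^1$ function; (b) quote smoothness of $K_0$ on $(0,\infty)$ to get~(2); (c) quote the large-argument asymptotics of $K_0$ and all its derivatives, with exponential decay, and conclude~(3) and the $|\xi|\ge 1$ part of~(1); (d) quote the small-argument (logarithmic) expansion of $K_0$ and its derivatives to control the $|\xi|\le 1$ part of~(1); (e) assemble. Alternatively, if one wishes to avoid invoking Bessel-function tables, one can prove~(2) and~(3) directly: differentiating under the (distributional) Fourier transform, $\xi^k\frac{d^l}{d\xi^l}\hat g$ is, up to constants, the Fourier transform of $\frac{d^k}{dx^k}(x^l g(x))$, and away from a neighbourhood of $\xi=0$ one can integrate by parts repeatedly, exploiting that $g$ is real-analytic with all derivatives decaying like negative powers of $|x|$, to gain arbitrary decay in $\xi$; I would still use the Bessel description, or a direct contour/stationary-phase argument, for the delicate endpoint $\xi=0$ in~(1).

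The main obstacle will be item (d) — the behaviour near $\xi=0$. Everything for $|\xi|$ large is soft once the exponential decay of $K_0$ and its derivatives is in hand, but near the origin $\hat g$ genuinely blows up (logarithmically), so one must track exactly how many powers of $\xi$ are needed to tame the $l$-th derivative; the constraint $l<k$ in property~(1) is precisely what makes $\xi^k \hat g^{(l)}(\xi)$ integrable (indeed bounded) on $[-1,1]$, and the proof must make that bookkeeping explicit, either via the standard small-argument expansion of $K_0$ and its derivatives or via a direct estimate of the distributional Fourier transform of $x^l g(x)$ near $\xi=0$.
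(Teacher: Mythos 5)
Your proposal is correct, but it takes a genuinely different route from the paper. The paper's proof never uses the Bessel identification inside the lemma: it is a short, self-contained Fourier-duality argument. One checks by explicit computation that $x^l\frac{d^k}{dx^k}g(x)$ lies in $L^2(\reals)$ for $l<k$ and in $L^1(\reals)$ for $l<k+1$; Plancherel and Riemann--Lebesgue then give that $\frac{d^l}{d\xi^l}(\xi^k\hat g)$ is in $L^2$ for $l<k$ and in $L^\infty$ for $l<k+1$, which is property (1). Sobolev embedding upgrades this to $\xi^k\hat g\in C^{k-1}(\reals)$, and since $\xi^k$ is invertible off the origin this yields smoothness of $\hat g$ on $\reals\setminus\{0\}$, i.e.\ property (2); property (3) then follows from the product rule and an induction on $l$. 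Your argument instead quotes the classical identity $\hat g(\xi)=c\,K_0(\abs{\xi})$ together with the large-argument exponential decay and small-argument logarithmic expansion of $K_0$ and its derivatives, and your bookkeeping near $\xi=0$ (each Leibniz term $\xi^{k-j}\hat g^{(l-j)}$ has power of $\xi$ strictly exceeding the order of differentiation when $l<k$, so at worst a logarithmic singularity survives) is sound. The trade-off: your route gives sharper information (exponential decay at infinity, the precise nature of the singularity at $0$) but requires carefully justifying the distributional Fourier transform of the non-$L^1$ function $g$ and importing the asymptotics of all derivatives of $K_0$ from the literature; the paper's route is softer and entirely elementary, needing only integrability estimates on the $x$-side and standard mapping properties of the Fourier transform, which is all the application (norm approximation of $f(D_E)$ by finite-propagation operators) actually requires.
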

\begin{proof}
  An explicit calculation shows that $x^l\frac{d^k}{dx^k} g(x)$ belongs to
  $L^2(\reals)$ for $l<k$ and to $L^1(\reals)$ for $l<k+1$.

  For the Fourier transforms, we therefore get that
  $\frac{d^l}{d\xi^l}\left(\xi^k \hat g\right)$ belongs to $L^2(\reals)$ for
  $l<k$ and to $L^\infty(\reals)$ for $l<k+1$.  
  The Sobolev embedding theorem then implies that $\xi^k\hat g(\xi)$ belongs
  to $C^{k-1}(\reals)$. As $\xi^k$ is smooth and invertible outside the
  origin, this implies that $\hat g$ is smooth on
  $\reals\setminus\{0\}$. Calculating $\frac{d^l}{d\xi^l}(\xi^k\hat g)$ with
  the product rule, by  induction on $l$ we establish that the restriction of
  $\xi^k\frac{d^l}{d\xi^l}\hat  g$ to $\reals\setminus (-1,1)$ is bounded for
  each $k,l$.
\end{proof}

Let $\mcM$ be the sub-$C^*$-algebra of $\mcL_A(\mcH)$
generated by all operators with finite propagation. Then
$C^*(M;A)\subset\mcM$ is a $C^*$-ideal (for the ideal property 
use an argument similar to the third paragraph in the proof of 
Lemma \ref{lem:Functional_calculus_via_wave_group}). 
Consider the associated six-term exact sequence
\begin{equation}
\label{eq:six_term_exact_sequence}
\begin{CD}
 K_0(C^*(M;A)) @>>>  K_0(\mcM) @>>>  K_0(\mcM/C^*(M;A)) \\
@A\partial_1 AA @. @VV\partial_0 V \\
 K_1(\mcM/C^*(M;A))  @<<<  K_1(\mcM) @<<<  K_1(C^*(M;A))  
\end{CD}
\text{ .}
\end{equation}

\begin{definition}
  If $\dim(M)$ is odd, $\frac{1+\chi(D_E)}{2}$ belongs to $\mcM$ and is a projection
  modulo $C^*(M;A)$, as $\chi(D_E)^2-I \in C^*(M;A)$. The \emph{coarse index}
  $\ind(D_{E})$ is then defined as
  \begin{equation*}
    \ind(D_E):=\partial_0[\frac{1}{2}(1+\chi(D_E))]\in K_1( C^*(M;A)).
  \end{equation*}

  \medskip If $\dim(M)$ is even, the decomposition of the spinor  bundle $S \to M$ in
  even and odd parts induces a decomposition $L^2(M,S \otimes
  E)=\mcH_0\oplus\mcH_1$ for which
  \[
  D_{E}=
  \begin{bmatrix}
    0       & D_1   \\
    D_0 & 0
  \end{bmatrix}
  \text{ .}
  \]

  Then $U^*\chi(D_E)_0$ belongs to $\mcM$, where $U\pp\mcH_0\pf\mcH_1$ is a
  unitary embedding\footnote{See \cite[p.~91 f]{MR1219916} for the definition
    and a proof for the existence of such an isometry. This notion is used for
    the functoriality of Remark \ref{rem:functoriality}}. $\id_M$, and
  $U^*\chi(D_E)_0$ is unitary modulo $C^*(M;A)$ as $\chi^2(D_E)-I\in
  C^*(M;A)$, i.e.~$U^*\chi(D_E)_0$ represents an element in
  $K_1(\mcM/C^*(M;A)) $.

  Then the \emph{coarse index} $\ind(D_E)$ is defined as
  \begin{equation*}
    \ind(D_E):=\partial_1[U^*\chi(D_E)_0]\in K_0( C^*(M;A)).
  \end{equation*}
\end{definition}

 


\subsection{The vanishing theorem}

The following vanishing theorem generalizes an analogous result from
\cite[Proposition 3.11 and the following Remark]{Roe_AMS}, \cite{MR1147350} and
\cite[Proposition 4]{MR1435703}
for the spin Dirac operator to the case of the spin Dirac operator
twisted with a flat Hilbert $A$-module bundle.

\begin{definition}
  Let $M$ be a complete Riemannian manifold and $A$ a unital $C^*$-algebra. A 
  closed subset
  $X\subset M$ is called \emph{coarsely $A$-negligible} if the inclusion
  induces the zero homomorphism $0=i_*\colon K_*(C^*(X\subset M;A))\to
  K_*(C^*(M;A))$.

  Note that, by functoriality, every closed subset of a coarsely $A$-negligible set
  is itself coarsely $A$-negligible. Secondly, note that, by definition,
  $C^*(U_R(X)\subset M;A)=C^*(X\subset M;A)$ for any closed $R$-neighborhood
  $U_R(X)$ of $X$. 
\end{definition}

\begin{proposition}
  If $M$ is  a complete connected non-compact Riemannian manifold then every
  compact subset $K\subset M$ is coarsely $A$-negligible for any  $A$. 
\end{proposition}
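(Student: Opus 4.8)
The plan is to show that for a compact subset $K \subset M$, the Roe algebra $C^*(K \subset M;A)$ has trivial $K$-theory, which immediately forces the inclusion-induced map to be zero. The key geometric input is that $M$ is connected and non-compact, so there is enough "room" to push the compact set $K$ off to infinity along a ray, and along the way the localized Roe algebra stays isomorphic to itself while the inclusion into $C^*(M;A)$ becomes homotopic to a map that factors through operators supported near a set that can be moved arbitrarily far out.

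First I would observe that $C^*(K \subset M;A)$ is, up to isomorphism, independent of which compact set $K$ we take: since $M$ is connected, any two non-empty compact subsets are contained in a common connected compact submanifold-with-boundary, and by the $R$-neighborhood remark preceding the statement we have $C^*(K \subset M;A) = C^*(U_R(K)\subset M;A)$ for all $R$. More to the point, I would argue that $C^*(K\subset M;A)$ is isomorphic to the algebra of $A$-compact operators on a suitable separable Hilbert $A$-module: an operator supported near a compact set $K$ is, after enlarging $K$ slightly, automatically of finite propagation (propagation bounded by the diameter-plus-$R$) and locally compact; and a locally compact operator whose support is contained in a fixed compact set $L$ satisfies $T = \rho(\psi) T \rho(\psi)$ for a compactly supported $\psi$ identically $1$ near $L$, hence $T = \rho(\psi)T\rho(\psi)$ is $A$-compact on all of $\mcH$. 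Conversely every $A$-compact operator can be approximated by finite-propagation locally compact operators supported near $K$ once $K$ has non-empty interior. Thus $C^*(K\subset M;A) \cong \mathcal{K}_A(\rho(\psi)\mcH)$ for an appropriate corner of $\mcH$, and this corner is a countably generated Hilbert $A$-module (since $M$ is second countable and the bundle has finitely generated projective fibers), so by stability of $K$-theory of $A$-compacts, $K_*(C^*(K\subset M;A)) \cong K_*(\mathcal{K}_A(\mcH_K)) \cong K_*(A)$ — a priori nonzero. So triviality of the algebra alone is \emph{not} enough; the vanishing must come from the map.

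Therefore I would instead show that $i_* = 0$ directly, by an Eilenberg-swindle / infinite-translation argument exploiting non-compactness. Pick a proper isometric embedding of a ray $[0,\infty) \hookrightarrow M$ (or more robustly, a sequence of disjoint translates of $K$ marching to infinity, which exists because $M$ is non-compact and complete), and let $\tau_n \colon C^*(K\subset M;A) \to C^*(K_n \subset M;A) \subset C^*(M;A)$ be the $*$-homomorphisms induced by conjugation with isometries moving $\rho(\psi)\mcH$ onto the corners over the translated sets $K_n$ (using the functoriality in Remark \ref{rem:functoriality}, applied to the identity map of $M$ but with varying embeddings of the localized submodule). Each $\tau_n$ induces the same map on $K$-theory as $i_*$ because $K_n$ and $K$ are coarsely equivalent subsets of $M$ — a bounded "path" of isometries connecting them gives a homotopy of the induced $*$-homomorphisms, or at least a unitary in the multiplier algebra conjugating one to the other. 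Then $\bigoplus_n \tau_n$ makes sense as a $*$-homomorphism into $C^*(M;A)$ (the images have uniformly bounded propagation and mutually disjoint, escaping supports, so the infinite sum converges strictly and lands in the Roe algebra), and the standard swindle $\bigoplus_{n\ge 0}\tau_n \cong \tau_0 \oplus \bigoplus_{n\ge 1}\tau_n$ together with $\tau_0 \sim i$ on $K$-theory forces $i_* = 0$.

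The main obstacle I anticipate is making the infinite direct sum $\bigoplus \tau_n$ genuinely land in $C^*(M;A)$ and be a well-defined $*$-homomorphism: one must check that the images, though individually of finite propagation, assemble into an operator that is still a norm-limit of finite-propagation locally compact operators (this uses that the supports $K_n$ go to infinity, so truncating the sum changes it by something supported far out and of small... no — rather, one approximates each summand and uses that the supports are disjoint and the approximants can be taken of controlled propagation, so finite partial sums lie in $C^*(M;A)$ and converge in norm only if the summand norms are summable, which they are \emph{not} in general). The honest fix is the usual one: one does not need the full sum to converge in norm but only to define a $*$-homomorphism $C^*(K\subset M;A) \to \mathcal{M}(C^*(M;A))$ or into $C^*(M;A)$ using that the Roe algebra is closed under the relevant block-diagonal sums with escaping supports — this is a standard feature of Roe algebras (it is exactly the mechanism behind flasqueness of the Roe algebra of a non-compact space), and I would cite or reprove the fact that a block-diagonal operator whose blocks have uniformly bounded propagation and supports tending to infinity lies in $C^*(M;A)$. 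Once that technical point is in hand, the swindle is formal and the proposition follows.
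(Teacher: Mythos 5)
Your proposal is correct, and it runs on the same engine as the paper's proof --- an isometrically embedded ray $\gamma\colon\reals_+\to M$ (which exists by completeness, connectedness and non-compactness) together with an Eilenberg swindle that pushes everything to infinity along that ray --- but it is organized quite differently. The paper does not attack the map $i_*$ directly: it first enlarges $K$ to the ray, using $K\subset U_R(\gamma(\reals_+))$ for large $R$, so that the inclusion $C^*(K\subset M;A)\hookrightarrow C^*(M;A)$ factors through the ideal $C^*(\gamma(\reals_+)\subset M;A)$; it then quotes the identification $C^*(\gamma(\reals_+)\subset M;A)\cong C^*(\reals_+;A)$ and the vanishing $K_*(C^*(\reals_+;A))=0$ (flasqueness of the ray, i.e.\ exactly the block-diagonal swindle you describe, but applied internally to $C^*(\reals_+;A)$). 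In other words, the paper kills the K-theory of an \emph{enlarged, non-compact} source, whereas you kill the \emph{map} while leaving the source alone --- consistently with your correct preliminary observation that $K_*(C^*(K\subset M;A))\cong K_*(A)$ is generally nonzero, so the source itself cannot be killed for compact $K$. What the paper's route buys is that both nontrivial steps are citations to existing results; what your route buys is self-containedness, at the cost of having to establish the technical lemma you identify (that a block-diagonal operator with uniformly finitely propagated, locally compact blocks whose supports escape to infinity lies in $C^*(M;A)$, and that the block shift is implemented by a finite-propagation multiplier isometry --- for which you must space the sets $K_n$ at uniformly bounded intervals along the ray). One caveat on wording: a general $M$ has no ``translates'' of $K$; the correct mechanism, which you do gesture at, is conjugation by Hilbert-module isometries covering coarse maps that move a neighborhood of $K$ to a neighborhood of $\gamma(Cn)$, exactly as in the functoriality remark. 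With that understood, your argument is sound.
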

\begin{proof}
  Choose an
isometric embedding $\gamma\pp\mbR_+\pf M$. This is possible because $M$ is
complete, connected and
non-compact, see~\cite[p.~92]{MR2088027}. Then, because a compact set has
finite diameter, $K\subset U_R(\gamma(\reals_+))$ for $R$ sufficiently large.

It remains to show that $K_*(C^*(\gamma(\reals_+  )\subset
M;A))=0$, so that $\gamma(\reals_+)$ is $A$-negligible. This follows from 
$K_*(C^*(\gamma(\reals_+  )\subset
M;A)) \cong K_*(\reals_+;A)$ by \cite[Proposition 2.9]{SchickZadeh} and 
$K_*(\reals_+;A)=0$ by an Eilenberg swindle argument as carried out  in
\cite[Proposition 9.4]{Roe_AMS}. Compare \cite[Proposition 2.6]{SchickZadeh}
for the generalization to Hilbert $A$-module coefficients. 
\end{proof}

\begin{thm}
\label{thm:vanishing_theorem}
Let $(M,g)$ be a complete Riemannian spin manifold with
uniformly positive scalar curvature
outside of an $A$-negligible set $X$. {Let $E\to M$ be a smooth finitely generated 
projective Hilbert  $A$-module bundle equipped with a flat metric connection.} 
Then the coarse index $\ind(D_E)\in K_*(C^*(M;A))$ of the
twisted Dirac operator of $(M,g)$ vanishes.

In particular, if $M$ is non-compact connected and has uniformly positive
scalar curvature outside a compact set, then $\ind(D_E)=0$ for any flat
Hilbert $A$-module bundle $E$ as above. 
\end{thm}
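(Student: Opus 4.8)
The plan is to show that the coarse index class is localized near $X$ and then kill it using that $X$ is $A$-negligible. The geometric input is the Schr\"odinger--Lichnerowicz formula: because $E$ carries a \emph{flat} metric connection the curvature term coming from $E$ drops out, so $D_E^2=\nabla^*\nabla+\tfrac14\kappa$ on $\Gamma^\infty_{\mathrm{cpt}}(M,S\otimes E)$, where $\kappa$ is the scalar curvature of $g$. Hence there is $c>0$ with $\langle D_E^2u,u\rangle\ge\tfrac c4\langle u,u\rangle$ in $A_+$ for every smooth compactly supported section $u$ whose support avoids $X$, and by a routine density argument the same estimate holds for every $u$ in the domain of $D_E$ supported in a compact subset of $M\setminus X$.

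Fix a normalizing function $\chi$ with $\psi:=\chi^2-1$ supported in $(-b,b)$ for some $b<\sqrt c/2$, and put $\eta:=\sqrt{1-\chi^2}\ge 0$, so $\eta$ is continuous with support in $(-b,b)$ and $\psi(D_E)=-\eta(D_E)^2=\chi(D_E)^2-1$. The heart of the argument is the localization estimate
\[
\operatorname{dist}\bigl(\psi(D_E),\,C^*(X\subset M;A)\bigr)\ \le\ \bigl(\tfrac{2b}{\sqrt c}\bigr)^2\ <\ 1 .
\]
I would prove it by approximating $\eta$ uniformly by functions $\eta_k$ with $\widehat{\eta_k}\in C^\infty_{\mathrm{cpt}}([-T_k,T_k])$, arranged so that also $t\eta_k\to t\eta$ uniformly. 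Then $\eta_k$ is Schwartz, $\psi_k:=-\eta_k^2$ satisfies $\psi_k(D_E)\to\psi(D_E)$ in operator norm, and by Lemma~\ref{lem:Functional_calculus_via_wave_group} the operator $\psi_k(D_E)$ is locally compact of propagation at most $2T_k$. For a bounded function $\beta_k$ that is $1$ on the $3T_k$-neighbourhood of $X$ and $0$ outside its $4T_k$-neighbourhood, the operator $\rho(\beta_k)\psi_k(D_E)$ is locally compact, of finite propagation and supported near $X$, hence lies in $C^*(X\subset M;A)$; since $\psi_k$ is real, $\|\psi_k(D_E)-\rho(\beta_k)\psi_k(D_E)\|=\|\psi_k(D_E)\rho(1-\beta_k)\|$. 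Finally, for $v$ in the dense set of compactly supported sections, $v':=\rho(1-\beta_k)v$ is supported off the $3T_k$-neighbourhood of $X$, so finite propagation speed keeps $\eta_k(D_E)v'$ and $\eta_k(D_E)^2v'$ inside $M\setminus X$; applying the Lichnerowicz estimate twice and using $D_E\eta_k(D_E)=(t\eta_k)(D_E)$ gives $\|\psi_k(D_E)v'\|=\|\eta_k(D_E)^2v'\|\le(2\|t\eta_k\|_\infty/\sqrt c)^2\|v\|$, and letting $k\to\infty$ with $\|t\eta\|_\infty\le b$ yields the claim.

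Granting this, $\ind(D_E)$ is by definition the image under the boundary map of \eqref{eq:six_term_exact_sequence} of a class in $K_*(\mathcal M/C^*(M;A))$ represented by the self-adjoint element $\tfrac12(1+\chi(D_E))$ (if $\dim M$ is odd; it is a projection modulo $C^*(M;A)$ since $\chi(D_E)^2-1\in C^*(M;A)$) or by the unitary-modulo-$C^*(M;A)$ element $U^*\chi(D_E)_0$ (if $\dim M$ is even). Modulo the smaller ideal $C^*(X\subset M;A)$ these representatives are, by the localization estimate, self-adjoint at distance $<1$ from a projection, respectively at distance $<1$ from a unitary; hence by functional calculus on $\tfrac12(1+\chi(D_E))$ (its image in the quotient has $\|p^2-p\|<\tfrac14$, so its spectrum misses $\tfrac12$), respectively by polar decomposition of $U^*\chi(D_E)_0$, one can replace them by a genuine projection, respectively unitary, in $\mathcal M/C^*(X\subset M;A)$ representing the same class in $K_*(\mathcal M/C^*(M;A))$ under the quotient $\mathcal M/C^*(X\subset M;A)\twoheadrightarrow\mathcal M/C^*(M;A)$. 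Thus the defining class of $\ind(D_E)$ lifts to $K_*(\mathcal M/C^*(X\subset M;A))$, and naturality of the boundary map for the evident morphism between the extensions with kernels $C^*(X\subset M;A)$ and $C^*(M;A)$ shows that $\ind(D_E)$ lies in the image of $K_*(C^*(X\subset M;A))\to K_*(C^*(M;A))$; since $X$ is $A$-negligible this map vanishes, so $\ind(D_E)=0$. The concluding ``in particular'' then follows from the Proposition just above, according to which every compact subset of a complete connected non-compact manifold is $A$-negligible.

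I expect the localization estimate to be the main obstacle. Because a function and its Fourier transform cannot both be compactly supported, the propagation radii $T_k$ of the approximants $\psi_k(D_E)$ tend to infinity, so one cannot expect $\psi(D_E)$ itself to lie in $C^*(X\subset M;A)$ --- only within distance $(2b/\sqrt c)^2<1$ of it --- which is exactly why the quantitative bookkeeping is needed (how far from $X$ a section must be supported, relative to $T_k$, for the Lichnerowicz estimate to bite, and the iteration of the energy estimate), followed by the correction step turning ``distance $<1$ from a projection/unitary'' into an honest lift of the index class. This is precisely the place where Roe and Zadeh pass to Friedrichs extensions of unbounded operators; the present approach avoids them by staying entirely within the functional calculus for the regular self-adjoint operator $D_E$ and the unit propagation of its wave group.
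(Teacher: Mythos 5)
Your argument is correct and reaches the same intermediate goal as the paper's proof --- lifting the class defining $\ind(D_E)$ through the ideal $C^*(X\subset M;A)$ and then invoking $A$-negligibility together with naturality of the boundary map --- but the key localization step is carried out by a genuinely different estimate. The paper (Proposition \ref{prop:lift}) shows that $1-\chi(D_E)^2$ lies \emph{exactly} in $C^*(X\subset M;A)$: it writes $(1-\chi^2)^2=F_\varepsilon^2+g_\varepsilon$ with $\hat{F}_\varepsilon$ compactly supported and $g_\varepsilon$ small (Lemma \ref{lem:lemma_two_from_appendix}), and plays the Lichnerowicz bound $D_E^2\ge s_0/4$ off $X$ against the spectral bound $D_E^2\varphi(D_E)^2\le R^2\varphi(D_E)^2$; the strictly positive coefficient $s_0/4-R^2$ on the left makes the resulting bound of order $\varepsilon$, so the criterion of Lemma \ref{lem:rel_crit} yields genuine membership in the ideal and $\tfrac12(1+\chi(D_E))$ (resp.\ $U^*\chi(D_E)_0$) is already a projection (resp.\ unitary) modulo $C^*(X\subset M;A)$, with no further correction needed. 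Your iteration of $\norm{w}\le\tfrac{2}{\sqrt c}\norm{D_Ew}$ with $\norm{D_E\eta_k(D_E)}\le\norm{t\eta_k}_\infty$ only produces the fixed constant $(2b/\sqrt c)^2<1$, i.e.\ approximate membership, which you then correctly repair by the standard spectral-cut/polar-decomposition argument in $\mcM/C^*(X\subset M;A)$; that extra K-theoretic step is the price of the simpler analysis, and the trade-off is legitimate. Two points you should make explicit: choose the approximants $\eta_k$ real-valued (e.g.\ by using even cut-offs of $\hat\eta$, exactly as in Lemma \ref{lem:lemma_two_from_appendix}) so that $\psi_k(D_E)$ is self-adjoint and the norm identity $\norm{\psi_k(D_E)-\rho(\beta_k)\psi_k(D_E)}=\norm{\psi_k(D_E)\rho(1-\beta_k)}$ is available; and the ``routine density argument'' extending the Lichnerowicz inequality to $w\in\dom{\overline{D_E}}$ with compact support at positive distance from $X$ does require the cut-off and commutator device of Section \ref{sec:regularity}, since the smooth approximating sections provided by the definition of the closure need not themselves be supported off $X$.
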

\begin{proof}
We use the notation from diagram~(\ref{eq:six_term_exact_sequence})
and consider the following commutative diagram:
\begin{equation}
\begin{CD}
0 @>>>C^*(M;A) @>>> \mcM @>>> \mcM/C^*(M;A) @>>> 0 \\
  @. @A{i_*}AA @| @AAA \\
0 @>>> C^*(X\subset M;A) @>>> \mcM @>>> \mcM/C^*(X\subset M;A) @>>> 0 \\  
\end{CD}
\end{equation}
In Proposition \ref{prop:lift} below we will construct a normalizing function $\chi$ for which $\chi(D_E)^2$ equals $I$ modulo 
$C^*(X\subset M;A)$. This implies that  $[\frac{1}{2}(1+\chi(D_E))]$ lifts to
$K_0(C^*(X\subset M;A))$ (if $\dim M$ is odd) and 
$[U^*\chi(D_E)_0]$ lifts to
$K_1(C^*(X\subset M;A))$ (if $\dim M$ is even). 

Using the 
commutativity of
\begin{equation*}
  \begin{CD}
      K_{*+1}(\mcM/ C^*(X\subset M;A))  @>{\partial_{*+1}}>> K_*(C^*(X\subset
      M;A))\\
     @VV{i_*}V  @VV{i_*\mathbf{=0}}V\\
   K_{*+1}(\mcM/C^*(M;A))  @>{\partial_{*+1}}>> K_*(C^*(M;A))
  \end{CD}
\end{equation*}
in the six-term exact sequence, 
we obtain  the desired
result by the $A$-negligibility of $X$. 
\end{proof}

To prove that $I-\chi(D_E)^2\in C^*(X\subset M;A)$ for a suitable $\chi$, we
use the following criterion.

\begin{lemma}\label{lem:rel_crit}
  An operator $T\in C^*(M;A)$ belongs to $C^*(X\subset M;A)$ for a closed
  subset $X$ if and only if for each $\epsilon>0$ there is $R>0$ such that for
  each $u\in L^2(M,  S \otimes E)$ with support outside the $R$-neighborhood $U_R(X)$ we
  have
  \begin{equation}\label{eq:aboveineq}
\norm{Tu}\le \epsilon\,\norm{u}.
\end{equation}

\end{lemma}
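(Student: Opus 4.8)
The plan is to establish both implications by reducing to the generators of the ideal $C^*(X\subset M;A)$, i.e.~to the locally compact operators of finite propagation that are supported near $X$, and using that every element of $C^*(M;A)$ is a norm limit of finite linear combinations of products of such operators, together with the elementary fact that an operator of propagation $\le r$ enlarges the support of a section by at most $r$.

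For the implication ``$T\in C^*(X\subset M;A)\Rightarrow$ the stated estimate'', I would show that the set $\mathcal I$ of all $T\in C^*(M;A)$ satisfying the condition in the lemma is a closed two-sided ideal of $C^*(M;A)$ containing every generator; as $C^*(X\subset M;A)$ is the smallest such ideal, this gives the claim. That $\mathcal I$ is a closed linear subspace and a left ideal is immediate, since composing on the left with a bounded operator only rescales $\epsilon$. A generator $S$, supported near $X$ with parameter $R_0$, lies in $\mathcal I$: for any $\epsilon>0$ the choice $R=R_0$ works, because $Su=0$ whenever $\supp{u}$ avoids $U_{R_0}(X)$; one sees this by approximating $u$ in $L^2$ by smooth compactly supported sections $u_n$ whose supports also avoid $U_{R_0}(X)$, writing $u_n=\rho(g_n)u_n$ for a compactly supported function $g_n$ with $d(\supp{g_n},X)\ge R_0$, and invoking the defining relation $S\rho(g_n)=0$. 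For the right ideal property, given $A\in C^*(M;A)$ and $T\in\mathcal I$, one approximates $A$ in norm by a finite-propagation operator $\tilde A$ of propagation $\le r$: for $u$ supported outside $U_R(X)$ the section $\tilde A u$ is supported outside $U_{R-r}(X)$, so $\norm{T\tilde A u}\le\epsilon\norm{\tilde A u}\le\epsilon\norm{\tilde A}\norm{u}$, while $\norm{T(A-\tilde A)u}\le\norm{T}\norm{A-\tilde A}\norm{u}$ is made small by first choosing $\tilde A$ close to $A$ (which only fixes $r$, hence the required $R$). Thus $TA\in\mathcal I$.

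For the converse, fix $\epsilon>0$ and let $R$ be as in the hypothesis. Choose $\phi\colon M\to[0,1]$ continuous (even smooth), equal to $1$ on $U_R(X)$ and supported in $U_{R+1}(X)$, and let $\rho(\phi)\in\mathcal{L}_A(\mcH)$ be the corresponding bounded multiplication operator. Since $1-\phi$ vanishes on $U_R(X)$, the section $\rho(1-\phi)w$ is supported outside $U_R(X)$ for every $w\in\mcH$, so $\norm{T\rho(1-\phi)}\le\epsilon$; as $T=T\rho(\phi)+T\rho(1-\phi)$, the operator $T$ is within $\epsilon$ of $T\rho(\phi)$. Because $\epsilon$ is arbitrary and $C^*(X\subset M;A)$ is closed, it suffices to show $T\rho(\phi)\in C^*(X\subset M;A)$. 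Writing $T$ as a norm limit of finite linear combinations of products of locally compact finite-propagation operators, we obtain $T\rho(\phi)$ as a limit of finite linear combinations of operators $L_1\cdots L_{m-1}\bigl(L_m\rho(\phi)\bigr)$; since $C^*(X\subset M;A)$ is an ideal, it is enough to know that each $L\rho(\phi)$, for $L$ locally compact of finite propagation, is a generator. Finite propagation of $L\rho(\phi)$ is clear because $\rho(\phi)$ has propagation $0$; local compactness follows from $L\rho(\phi)\rho(g)=L\rho(\phi g)$ and $\rho(g)L\rho(\phi)=(\rho(g)L)\rho(\phi)$ with $\phi g\in C_0(M)$; and $L\rho(\phi)$ is supported near $X$ since $\supp{\phi}\subseteq U_{R+1}(X)$: the operator $L\rho(\phi)\rho(f)=L\rho(\phi f)$ vanishes as soon as $d(\supp{f},X)$ exceeds $R+1$ (disjoint supports), and $\rho(f)L\rho(\phi)$ vanishes as soon as $d(\supp{f},X)$ exceeds $R+1+r$, with $r$ the propagation of $L$.

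I expect the main difficulty to be bookkeeping rather than conceptual: tracking how finite-propagation approximations enlarge supports, and converting the hypothesis ``$\supp{u}$ avoids $U_R(X)$'' into a genuine cutoff identity $u=\rho(g)u$ with $g$ of prescribed support. The latter rests on the standard density of smooth compactly supported sections (supported in a slightly shrunken closed set) and on the availability of multiplication operators by continuous functions inside $\mathcal{L}_A(\mcH)$.
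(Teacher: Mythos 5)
Your proof is correct and follows essentially the same strategy as the paper: the substantive (converse) direction rests on the same key estimate $\norm{T(1-\rho(\phi))}\le\epsilon$ for a cutoff $\phi$ supported near $X$, the only cosmetic difference being that you cut off $T$ itself and then push the cutoff onto the approximants, whereas the paper cuts off the approximants $T_n$ directly. Your treatment of the forward direction (exhibiting the set of operators satisfying the estimate as a closed two-sided ideal containing the generators) is more detailed than the paper's one-line dismissal, and is a valid way to fill it in.
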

\begin{proof}
  If $T$ is a norm limit of operators which are supported in $R$-neighborhoods
  of $X$, the inequality \eqref{eq:aboveineq} obviously holds.

  Conversely, write $T=\lim T_n$ with operators $T_n$ which are locally
  compact and of finite propagation. Using the estimate, we have to modify
  the the operators $T_n$ such that they are in addition supported in a
  bounded neighborhood of $X$. For this we  choose cutoff functions
  $\phi_n\colon M\to [0,1]$ which are supported in $U_{2n}(X)$ and which are
  identically equal 
  to $1$ in $U_n(X)$. Then $T_n\rho(\phi_n)$ are still locally compact, of
  {some} finite propagation $P_n$, and,
   in addition, supported in $U_{2n+P_n}(X)$, i.e. $(T_n \rho(\phi_n))  \rho(f) = 0$,
   {if 
   $\supp{f} \cap U_{2n+P_n}(X) = \emptyset$}. 

We only have to show that $T_n$ and $T_n\rho(\phi_n)$ are close in operator
norm if $n$ is sufficiently large. For $\epsilon>0$ choose $R$ as in the
assumption and $n>R$. Then 
\[
   \norm{(T_n-T_n\rho(\phi_n))u} = \norm{T_n(1-\rho(\phi_n)) u} \le \epsilon \norm{(1-\rho(\phi_n))u} \le \epsilon\norm{u},
 \]
 for each $u \in L^2(M, S\otimes E)$, as $(1-\rho(\phi_n))u$ has support outside the
$R$-neighborhood of $X$. Therefore $\norm{T_n-T_n\rho(\phi_n)}\le \epsilon$.
\end{proof}

Before we can prove the required Proposition \ref{prop:lift}, we need two
further preparatory lemmas. {The first is a standard property
  of the Fourier 
transform, its proof is left to the reader.}


\begin{lem}
\label{lem:lemma_one_from_appendix}
Let $f\in C_{\mathrm{cpt}}^{\infty}(\mbR)$. {Then for each $\delta>0$ there exists 
a smooth $L^1$-function $f_{\delta}$ with compactly supported Fourier transform 
 and such that for all
 $x\in\mbR$ and for $j=0,1,2$ we have $\abs{x^j\,(f(x)-f_{\delta}(x))}\le\delta$.}
 \end{lem}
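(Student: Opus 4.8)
The plan is to obtain $f_\delta$ by truncating $\hat f$ in frequency space. Since $f\in C^\infty_{\mathrm{cpt}}(\mbR)$ is a Schwartz function, so is $\hat f$, and in particular every derivative $\hat f^{(m)}$ lies in $L^1(\mbR)$. I would fix once and for all an even function $\psi\in C^\infty_{\mathrm{cpt}}(\mbR)$ with $0\le\psi\le 1$ and $\psi\equiv 1$ on $[-1,1]$, and for a parameter $R\ge 1$ put $\psi_R(\xi):=\psi(\xi/R)$. Dilation gives $\norm{\psi_R^{(k)}}_\infty=R^{-k}\norm{\psi^{(k)}}_\infty\le\norm{\psi^{(k)}}_\infty$ for $R\ge 1$, so the sup-norms of the derivatives of the cutoffs stay bounded as $R\to\infty$, while $1-\psi_R$ is supported in $\{\abs\xi\ge R\}$. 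I would then define $f_\delta$ to be the inverse Fourier transform of $\psi_R\hat f$, for a value $R=R(\delta)$ to be fixed at the end. Since $\psi_R\hat f$ is smooth with compact support, $f_\delta$ is again a Schwartz function --- in particular smooth and in $L^1$ --- and by Fourier inversion $\widehat{f_\delta}=\psi_R\hat f$ has compact support, so the structural requirements on $f_\delta$ are met.

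It remains to estimate $x^j(f-f_\delta)$ for $j=0,1,2$. Put $h:=f-f_\delta$, so that $\hat h=(1-\psi_R)\hat f$. Combining the identity $\widehat{x^j h}=i^j\,\hat h^{(j)}$ with the Fourier inversion bound $\norm{g}_\infty\le\frac{1}{2\pi}\norm{\hat g}_{L^1(\mbR)}$ gives
\[
\norm{x^j h}_\infty\le\frac{1}{2\pi}\,\norm{\frac{d^j}{d\xi^j}\!\bigl[(1-\psi_R)\hat f\bigr]}_{L^1(\mbR)}\qquad(j=0,1,2).
\]
By the Leibniz rule the argument of the norm on the right is a finite linear combination, with coefficients independent of $R$, of terms $\psi_R^{(k)}\,\hat f^{(j-k)}$ with $0\le k\le j$ --- reading $\psi_R^{(0)}:=1-\psi_R$ when $k=0$ --- each supported in $\{\abs\xi\ge R\}$. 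Using $\norm{\psi_R^{(k)}}_\infty\le\norm{\psi^{(k)}}_\infty$, each such term has $L^1$-norm at most $C(f,\psi)\int_{\abs\xi\ge R}\abs{\hat f^{(m)}(\xi)}\,d\xi$ for the relevant $m\le 2$, which tends to $0$ as $R\to\infty$ by dominated convergence since $\hat f^{(m)}\in L^1(\mbR)$. Choosing $R$ large enough that the right-hand side of the displayed estimate is $\le\delta$ simultaneously for $j=0,1,2$ completes the construction.

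The closest thing to an obstacle here is a bookkeeping issue rather than a conceptual one: the frequency cutoff must be pushed out to infinity without the sup-norms of its derivatives blowing up, which is exactly why one dilates a single fixed bump function instead of translating bumps of fixed width --- and why the normalization $\norm{\psi_R^{(k)}}_\infty\le\norm{\psi^{(k)}}_\infty$ was arranged above. Otherwise this is the standard ``mollify on the Fourier side'' device; the Fourier-analytic identities and the dominated-convergence estimate are routine, and, in keeping with the paper's phrasing, I would leave the remaining constant bookkeeping to the reader.
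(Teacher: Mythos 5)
Your proof is correct. The paper itself gives no argument here --- it declares the lemma ``a standard property of the Fourier transform'' and leaves the proof to the reader --- and what you have written is exactly the standard argument one is expected to supply: dilate a fixed frequency cutoff so that the sup-norms of its derivatives stay bounded, transfer the weights $x^j$ to derivatives on the Fourier side via $\widehat{x^j h}=i^j\hat h^{(j)}$ and the inversion bound, and kill the resulting $L^1$-tails of the Schwartz function $\hat f$ and its derivatives by taking the cutoff radius large. All steps check out (in particular, $h=f-f_\delta$ is Schwartz, so every Fourier identity you invoke is legitimate), and the choice of a dilated rather than translated cutoff is precisely the right bookkeeping.
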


\begin{lem}
\label{lem:lemma_two_from_appendix}
Let $f\in C_{\mathrm{cpt}}^{\infty}(\mbR)$ with $f\geq 0$. Then  for each
$\varepsilon>0$ there exists a decomposition $f=f_{\varepsilon}+g_{\varepsilon}$ and
$S(\varepsilon)>0$  with the following properties:
\begin{enumerate}[(i)]
\item $f_{\varepsilon}=F_{\varepsilon}^2$ with $F_{\varepsilon}$ 
  a smooth $L^1$-function and $\supp{\hat{F}_{\varepsilon}}\subset [-S(\varepsilon),S(\varepsilon)]$.
\item $\sup\{\abs{x^j\,g_{\varepsilon}(x)}\,;\,x\in\mbR\}\leq\varepsilon$ for
  each $j=0,1,2$.
\end{enumerate}
\end{lem}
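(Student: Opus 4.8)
The plan is to prove Lemma \ref{lem:lemma_two_from_appendix} by writing $f = (\sqrt{f})^2$ — which makes sense because $f \ge 0$ — but since $\sqrt{f}$ need not be smooth where $f$ vanishes, I will first replace $f$ by a smooth square. Concretely, since $f \in C^\infty_{\mathrm{cpt}}(\mbR)$ with $f \ge 0$, I would find a smooth compactly supported $h \ge 0$ with $h^2$ close to $f$ in the relevant sense: one standard device is to set $h = \sqrt{f + \eta^2\psi}$ for a small constant $\eta$ and a fixed smooth bump $\psi \ge 1$ on $\mathrm{supp}(f)$, so that $f + \eta^2\psi$ is smooth, strictly positive on a neighborhood of $\mathrm{supp}(f)$, hence has a smooth square root there; cut off to keep compact support. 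Then $h^2 - f = \eta^2\psi$ on $\mathrm{supp}(f)$ is uniformly small (together with the factors $x^j$, $j = 0,1,2$, since everything is compactly supported), so $f = h^2 + (f - h^2)$ is a first approximation with the error term controlled in the $\sup |x^j \cdot|$ norm for $j=0,1,2$.

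Next I would approximate the smooth compactly supported function $h$ by a function $F_\varepsilon$ with compactly supported Fourier transform, using Lemma \ref{lem:lemma_one_from_appendix}: for each $\delta > 0$ there is a smooth $L^1$-function $F_\varepsilon$ with $\mathrm{supp}(\hat F_\varepsilon) \subset [-S(\varepsilon), S(\varepsilon)]$ and $|x^j(h(x) - F_\varepsilon(x))| \le \delta$ for $j = 0,1,2$. Setting $f_\varepsilon := F_\varepsilon^2$, property (i) holds by construction. For property (ii), I write
\begin{equation*}
g_\varepsilon = f - f_\varepsilon = (f - h^2) + (h^2 - F_\varepsilon^2) = (f - h^2) + (h - F_\varepsilon)(h + F_\varepsilon),
\end{equation*}
and estimate $|x^j g_\varepsilon(x)|$ for $j = 0,1,2$ by the triangle inequality, distributing the powers $x^j$ via $x^j = x^{j_1} x^{j_2}$ with $j_1 + j_2 = j$ over the factors $h - F_\varepsilon$ and $h + F_\varepsilon$; since $h$ is bounded with bounded $x^j h$ and $F_\varepsilon$ is $\delta$-close to $h$ in the same sense, $|x^{j_2}(h + F_\varepsilon)|$ is bounded by a constant $C$ independent of small $\delta$, so each cross term is $\le C\delta$. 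Choosing first $\eta$ small enough to make $\sup_j \sup_x |x^j(f - h^2)| \le \varepsilon/2$ and then $\delta$ small enough to make the remaining contribution $\le \varepsilon/2$ yields (ii).

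The main obstacle is the smoothness of the square root: one cannot simply take $\sqrt f$ because $f$ may vanish to finite order, so $\sqrt f$ is only continuous (indeed Hölder) near its zero set, and this would break the hypothesis that $F_\varepsilon$ (and hence the approximand) be smooth — Lemma \ref{lem:lemma_one_from_appendix} requires a smooth input. The perturbation trick $f \rightsquigarrow f + \eta^2 \psi$ circumvents this cleanly at the cost of introducing the controlled error $f - h^2$, which is precisely why the statement allows for the remainder $g_\varepsilon$ rather than demanding $f = F_\varepsilon^2$ exactly. One should be slightly careful that $h$ remains compactly supported: multiply the smooth square root of $f + \eta^2\psi$ by a fixed smooth cutoff that is $1$ on $\mathrm{supp}(f)$ and supported in a slightly larger compact set, which alters $h^2$ only off $\mathrm{supp}(f)$ where $f = 0$, so the identity $g_\varepsilon = (f - h^2) + (h - F_\varepsilon)(h + F_\varepsilon)$ and all the estimates above go through unchanged.
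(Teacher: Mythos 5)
Your argument is correct and follows the same overall strategy as the paper's proof: produce a smooth, compactly supported surrogate for $\sqrt{f}$, feed it into Lemma \ref{lem:lemma_one_from_appendix}, and control $f-F_{\varepsilon}^2$ via the difference-of-squares factorization $(h-F_{\varepsilon})(h+F_{\varepsilon})$ together with the compact supports. The only point of divergence is the smoothing device: the paper uniformly approximates the merely continuous function $F=f^{1/2}$ by a smooth compactly supported $H$, whereas you perturb $f$ to $f+\eta^2\psi$ so that the square root itself becomes smooth; both yield the same estimates.
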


\begin{proof}
Set $F:=f^{1/2}$, choose $A>0$ with $\supp{F}\subset [-A,A]$. Let
$\varepsilon>0$. 
Approximate $F$ by $H\in C^{\infty}(\mbR)$ with
$\norm{F-H}\leq\frac{\varepsilon}{(A+1)^2 \cdot 4(\norm{F}+1)}$, 
$\supp{H}\subset [-A-1,A+1]$ and $\norm{H}\le \norm{F}+1$. Here and in 
the remainder of the proof we use the maximum norm on $C_{\mathrm{cpt}}^{\infty}({\mbR})$. 

 Choose
$\delta\le \frac{\varepsilon}{4 (\norm{F} +1)}$, $\delta\le 1$. By 
Lemma~\ref{lem:lemma_one_from_appendix}, $H$ admits a decomposition
$H=H_{\delta}+R_{\delta}$ where $H_{\delta}$ is a smooth $L^1$-function with
compactly supported Fourier transform, such that
$\sup\abs{x^jR_{\delta}(x)}\leq\delta$ for $j=0,1,2$, and
$\norm{H_{\delta}}\leq\norm{H}$ and with $\supp{\hat{H}_{\delta}}\subset
[-R(\delta),R(\delta)]$ {for suitable $R(\delta)>0$}. The
estimate on $R_\delta$ implies
$\norm{H_\delta}\le \norm{H}+\delta$.

 Set $f_{\varepsilon}:=H_{\delta}^2$ and
$F_{\varepsilon}:=H_{\delta}$. Then (i) holds with
 $S(\varepsilon):=R(\delta)$.
Finally, we obtain (ii) from the following estimate for $j=0,1,2$
\begin{equation*}
\belowdisplayskip=-15pt
\begin{aligned}
\abs{x^j(f(x)-f_{\varepsilon}(x))}
&=
\abs{x^j(F(x)-F_{\varepsilon}(x))\,(F(x)+F_{\varepsilon}(x))} \\
&\le 
\abs{x^j(F(x)-H_{\delta}(x))}\,\norm{F+H_{\delta}} \\
&\leq
\left(\abs{x^j(F(x)-H(x))}+\abs{x^j(H(x)-H_{\delta}(x))}\right)\,\norm{F+H_{\delta}} \\
&\leq
\left(\abs{x^j(F(x)-H(x))}+\abs{x^jR_{\delta}(x)}\right)\left(\norm{F}+\norm{H_\delta}\right) \\
&\leq
 \left( (A+1)^j\norm{F-H} +\delta \right) \left(\norm{F} +\norm{H}
   +\delta\right) \\
&\leq \left(\frac{\varepsilon}{4(\norm{F}+1)}
  +\frac{\varepsilon}{4(\norm{F}+1)} \right)
\left(\norm{F}+\norm{F}+1+1\right) = 
\varepsilon\text{ .}
\end{aligned}
\end{equation*}
\end{proof}

\begin{proposition}\label{prop:lift}
  In the situation of Theorem \ref{thm:vanishing_theorem}, there is a
  normalizing   function $\chi$ such that $\chi(D_E)^2-I\in C^*(X\subset M;A)$.
\end{proposition}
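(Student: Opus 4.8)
The plan is to produce \emph{one} normalizing function $\chi$ and then verify $I-\chi(D_E)^2\in C^*(X\subset M;A)$ via the criterion of Lemma~\ref{lem:rel_crit}, the essential input being the a~priori estimate that positive scalar curvature forces on $D_E$ through the Schr\"odinger--Lichnerowicz formula. I would let $\kappa_0>0$ be a lower bound for the scalar curvature on $M\setminus X$, set $c_0:=\tfrac12\sqrt{\kappa_0}$, fix any $a$ with $0<a<c_0$, and choose $\chi\in C^\infty(\reals)$ odd, with $|\chi|\le1$, $\chi(t)=\operatorname{sgn}(t)$ for $|t|\ge a$, and such that $\psi:=1-\chi^2\ge0$ vanishes to infinite order at $\pm a$; concretely $\chi(t)=\operatorname{sgn}(t)\sqrt{1-\rho(t)}$ works for an even bump $\rho$ supported in $[-a,a]$ with $\rho(0)=1$, $0<\rho\le1$ on $(-a,a)$, and $\rho$ flat at $\pm a$, which gives $\psi=\rho$. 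Since $\psi\in C_0(\reals)$, Lemma~\ref{lem:Functional_calculus_via_wave_group}(b) yields $\psi(D_E)\in C^*(M;A)$, so everything reduces to showing $\psi(D_E)\in C^*(X\subset M;A)$.

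For the geometric estimate I would use that, $E$ being flat, $D_E^2=\nabla^*\nabla+\tfrac{\kappa}{4}$; hence for smooth compactly supported $s$ with $\supp{s}\cap X=\emptyset$ one has $\innerprod{D_E s,D_E s}=\innerprod{\nabla s,\nabla s}+\tfrac14\innerprod{\kappa s,s}\ge c_0^2\innerprod{s,s}$ in $A_{+}$, so $\lVert D_E s\rVert\ge c_0\lVert s\rVert$. A cutoff argument exactly parallel to the one in the proof of Theorem~\ref{thm:Essential self-adjointness of twisted Dirac operator} (multiply by a function equal to $1$ on $\supp{s}$ and vanishing on a neighbourhood of the closed set $X$, and pass to the limit in the defining sequence of the closure) then promotes this to: for every $r>0$, every $s\in\dom{D_E}$ with $\supp{s}\cap U_r(X)=\emptyset$ still satisfies $\lVert D_E s\rVert\ge c_0\lVert s\rVert$. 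I expect this extension of the estimate to $\dom{D_E}$ to be the first point that needs care.

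The main structural observation is that one cannot get away with a single square-root decomposition $\psi=F^2+g$ as in Lemma~\ref{lem:lemma_two_from_appendix}: for $u$ far from $X$ that only bounds $\lVert\psi(D_E)u\rVert$ by about $(a/c_0)^2\lVert u\rVert$, a fixed positive constant rather than something arbitrarily small, so the estimate must be iterated. Given $\epsilon>0$, I would pick $m\in\naturals$ with $\bigl(\tfrac{a+c_0}{2c_0}\bigr)^{2m}<\epsilon/2$, set $\eta:=\psi^{1/(2m)}$ --- smooth and supported in $[-a,a]$ thanks to the infinite flatness of $\psi$, with $0\le\eta\le1$ --- and note $\psi(D_E)=\eta(D_E)^{2m}$ by multiplicativity of the functional calculus on bounded functions. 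Using Lemma~\ref{lem:lemma_one_from_appendix} I would approximate $\eta$ by $\tilde\eta\in C^\infty(\reals)\cap L^1(\reals)$ with $\widehat{\tilde\eta}$ supported in some $[-S,S]$ and $|t^j(\eta-\tilde\eta)(t)|\le\delta$ for $j=0,1$, taking $\delta$ so small that $\lVert t\,\tilde\eta(t)\rVert_\infty\le a+\delta\le\tfrac{a+c_0}{2}$ and $2m(1+\delta)^{2m-1}\delta<\epsilon/2$; by the integral representation of Lemma~\ref{lem:Functional_calculus_via_wave_group}(a), $\tilde\eta(D_E)$ then has propagation at most $S$.

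Finally I would set $R:=2mS+1$; for $u\in L^2(M,S\otimes E)$ with $\supp{u}\cap U_R(X)=\emptyset$ the iterates $w_0:=u$, $w_{j+1}:=\tilde\eta(D_E)w_j$ are supported outside $U_{R-jS}(X)\supseteq U_1(X)$ for $j\le2m$, and for $j\ge1$ lie in $\dom{D_E}$ with $D_E\tilde\eta(D_E)=\phi(D_E)$, $\phi(t)=t\,\tilde\eta(t)$ bounded, by the last bullet of Theorem~\ref{thm:Continuous functional calculus}. The scalar-curvature estimate gives $c_0\lVert w_{j+1}\rVert\le\lVert D_E w_{j+1}\rVert=\lVert\phi(D_E)w_j\rVert\le(a+\delta)\lVert w_j\rVert$, whence $\lVert\tilde\eta(D_E)^{2m}u\rVert=\lVert w_{2m}\rVert<\tfrac\epsilon2\lVert u\rVert$; together with the telescoping bound $\lVert\eta(D_E)^{2m}-\tilde\eta(D_E)^{2m}\rVert\le2m(1+\delta)^{2m-1}\delta<\tfrac\epsilon2$ this yields $\lVert\psi(D_E)u\rVert<\epsilon\lVert u\rVert$, and Lemma~\ref{lem:rel_crit} concludes $\psi(D_E)\in C^*(X\subset M;A)$, i.e.\ $\chi(D_E)^2-I\in C^*(X\subset M;A)$. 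Besides the domain extension mentioned above, the part I expect to require the most bookkeeping is tracking the supports of the $w_j$ under the finite-propagation operators $\tilde\eta(D_E)$; the realization that one must iterate rather than square once is the conceptual crux.
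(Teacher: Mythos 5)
Your argument is correct in substance, but it takes a genuinely different route from the paper's. Both proofs fix the same kind of normalizing function (with $1-\chi^2$ spectrally supported in $(-\tfrac12\sqrt{s_0},\tfrac12\sqrt{s_0})$) and both reduce to the criterion of Lemma~\ref{lem:rel_crit}; the difference is how the smallness is extracted. The paper gets it from a \emph{single} application of Lemma~\ref{lem:lemma_two_from_appendix}: it decomposes $f=\varphi^2=F_\varepsilon^2+g_\varepsilon$ and works with the $A$-valued quadratic form rather than with operator norms. Your structural claim that a single square-root decomposition ``only bounds $\lVert\psi(D_E)u\rVert$ by about $(a/c_0)^2\lVert u\rVert$'' is therefore a mischaracterization: the quantity $\tfrac{s_0}{4}\langle f(D_E)u,u\rangle-\langle D_E^2f(D_E)u,u\rangle$ is bounded \emph{below} by $(\tfrac{s_0}{4}-R^2)\langle\varphi(D_E)u,\varphi(D_E)u\rangle$ (because $\varphi$ is supported in $[-R,R]$) and \emph{above} by $\varepsilon(\tfrac{s_0}{4}+1)\lVert u\rVert^2\cdot 1_A$ (Lichnerowicz applied to $F_\varepsilon(D_E)u$, plus the $g_\varepsilon$-errors), and these two bounds squeeze $\lVert\varphi(D_E)u\rVert^2$ down to $O(\varepsilon)\lVert u\rVert^2$ with $\varepsilon$ arbitrary, so no iteration is needed. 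Your iterated-contraction argument with $2m$-th roots is a valid alternative; it costs more bookkeeping (supports of the $w_j$, the telescoping estimate, the order of choices $\epsilon\to m\to\delta\to S\to R$) but only uses the crude operator-norm form of the positivity estimate, whereas the paper's version buys brevity by invoking positivity exactly once, on the single section $F_\varepsilon(D_E)u$.

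Two points to tighten. First, the smoothness of $\psi^{1/(2m)}$ for \emph{all} $m$ is not a consequence of infinite flatness of an arbitrary bump; you must fix a concrete $\psi$ whose roots are all smooth, e.g. $\psi(t)=\exp\bigl(a^{-2}-(a^2-t^2)^{-1}\bigr)$ on $(-a,a)$ and $0$ outside, or avoid roots altogether by inserting a plateau function $\eta$ with $\eta\equiv1$ on $\mathrm{supp}\,\psi$ and $\mathrm{supp}\,\eta\subset(-c_0,c_0)$ and writing $\psi(D_E)=\psi(D_E)\eta(D_E)^{2m}$. Second, the extension of $\lVert D_Es\rVert\ge c_0\lVert s\rVert$ from smooth compactly supported sections to elements of the domain of the closure supported away from $X$ does go through by the cutoff argument you indicate (using that $D_E$ is local, so multiplying an approximating sequence by a cutoff equal to $1$ on $\mathrm{supp}\,s$ and vanishing near $X$ changes neither limit); note that the paper needs the same kind of extension, since $F_\varepsilon(D_E)u$ is not a smooth compactly supported section either.
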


\begin{proof}
Let $s_0>0$ be such that $s:=\mathrm{scal}\geq
s_0$ outside of $X$, choose $0<R<(s_0/4)^{1/2}$ and let $\chi$ be such that
$\varphi:=1-\chi^2$ is compactly supported in $[-R,R]$. 

Let $\varepsilon>0$. We will derive the
inequality
\begin{equation}
\label{eq:very_impressive_estimate}
\norm{\varphi(D_E)u}^2\leq \left(\frac{s_0}{4}-R^2\right)^{-1}\left(\frac{s_0}{4}+1\right)\,\varepsilon\,\norm{u}^2
\end{equation}
for each $u\in\Gamma_{\mathrm{cpt}}^{\infty}(M,S)$ with $\supp{u}$ outside of
$B(K;3\,S(\varepsilon))$ for an $S(\varepsilon)>0$. Using
Lemma~\ref{lem:rel_crit} this implies that $\varphi(D_E)\in C^*(X\subset
M;A)$, as required. 

In order to obtain~(\ref{eq:very_impressive_estimate}) we use
Lemma~\ref{lem:lemma_two_from_appendix} and write
$f=\varphi^2=f_{\varepsilon}+g_{\varepsilon}$ with
\begin{enumerate}[(i)]
\item $f_{\varepsilon}=F_{\varepsilon}^2$ with $F_{\varepsilon}\in C^{\infty}(\mbR)\cap L^1(\mbR)$ and $\supp{\hat{F}_{\varepsilon}}
\subset [-S(\varepsilon),S(\varepsilon)]$,

\item $\sup\{\abs{x^j\,g_{\varepsilon}(x)}\,;\,x\in\mbR\}\leq\varepsilon$ for each $j=0,1,2$.
\end{enumerate}
From the functional calculus of 
Theorem~\ref{thm:Continuous functional calculus} and  the self-adjointness of $D_E$  we obtain the
following estimates for each $u\in\Gamma_{\mathrm{cpt}}^{\infty}(M,S\tensor E)$:  
\begin{align}
\label{eq:Basic_estimates_II}
\langle D_E^2f(D_E) u,u\rangle  & = \langle D_E^2\varphi(D_E)^2 u,u\rangle
 \leq R^2\,
     \langle \varphi(D_E)u, \varphi(D_E) u \rangle  \\
\label{eq:Basic_estimates_III}
\langle D_E^2g_{\varepsilon}(D_E)u,u\rangle  &\leq  \varepsilon\,\norm{u}^2\cdot 1_A \\
\label{eq:Basic_estimates_IV}
\langle g_{\varepsilon}(D_E)u,u\rangle &\leq \varepsilon\,\norm{u}^2\cdot 1_A
\end{align}
The first inequality uses  $D_E\varphi(D_E) = m(D_E)\varphi(D_E)$
for a suitable function $m\colon \reals\to \reals$ with $\|m\|_\infty\le R$ as
$\supp{\varphi}\subset [-R,R]$ and 
 the second inequality is based on the estimate
\[
 \langle D_E^2g_{\varepsilon}(D_E)u,u\rangle \leq \| \langle D_E^2 g_{\varepsilon}(D_E) u, u \rangle \| \cdot 1_A \leq \| D_E^2 g_{\varepsilon}(D_E) \| \cdot \|u\|^2 \cdot 1_A 
\]
which follows from the inequality $0 \leq a \leq \|a\|_A \cdot 1_A$ for $a \in A_+$ and the Cauchy-Schwarz inequality
for Hilbert $A$-modules. The third inequality above is derived in the same manner.

Using that $D_E$ has unit propagation speed one sees that the inclusion
$\supp{\hat{F}_{\varepsilon}}\subset [-S(\varepsilon),S(\varepsilon)]$ from
(i) implies $\supp{F_{\varepsilon}(D_E)u}\subset
B(\supp{u}, S(\varepsilon))$. In particular, $\supp{F_{\varepsilon}(D_E)u}$ is
outside of $B(X, S(\varepsilon))$ if $\supp{u}$ is outside of
$B(X;3\,S(\varepsilon))$. 

From the Schr\"odinger-Lichnerowicz formula
\begin{equation*}
D_E^2=\Delta
+s/4\text{ ,}
\end{equation*}
where $\Delta$ is a positive operator and $s$ denotes multiplication with the 
scalar curvature function, we
obtain for such $u$: 
\begin{equation}
\label{eq:Basic estimate_IV}
\begin{aligned}
&\phantom{{}={}}
\langle D_E^2 f_{\varepsilon}(D_E)u,u\rangle \\
&=
\langle F_{\varepsilon}^2(D_E)D_E^2u,u\rangle \\
&=
\langle \left\{\Delta+\frac{s}{4}\right\}F_{\varepsilon}(D_E)u,F_{\varepsilon}(D_E)u\rangle\\
&=
\langle \left\{\Delta+\left(\frac{s-s_0}{4}\right)\right\}F_{\varepsilon}(D_E)u,F_{\varepsilon}(D_E)u\rangle+\frac{s_0}{4}\,\langle f_{\varepsilon}(D_E)u,u\rangle                              \\
&\geq
\frac{s_0}{4}\,\langle f_{\varepsilon}(D_E)u,u\rangle\text{ .}
\end{aligned}
\end{equation}
Here we have used that $\supp{F_{\varepsilon}(D_E)u}$ is outside of $X$ and
hence that $s\geq s_0$ holds there, hence $\Delta + (s-s_0)/4$ acts as a
positive operator on
$F_\epsilon(D_E)u$. Using~(\ref{eq:Basic_estimates_II})--(\ref{eq:Basic
  estimate_IV}) one finally obtains
(\ref{eq:very_impressive_estimate}) from the following 
inequality in $A$ 
\begin{align*}
&\phantom{{}\leq}
\left(\frac{s_0}{4}-R^2\right)\,\langle \varphi(D_E)u, \varphi(D_E)u \rangle\\
&\leq
\frac{s_0}{4}\,\langle f(D_E)u,u\rangle
-\langle D_E^2f(D_E)u,u\rangle
\\
&\leq
\frac{s_0}{4}\,\langle f(D_E)u,u\rangle
-\langle D_E^2f_{\varepsilon}(D_E)u,u\rangle
-\langle D_E^2g_{\varepsilon}(D_E)u,u\rangle                \\
&\leq 
\frac{s_0}{4}\,\langle f(D_E)u,u\rangle
-\langle D_E^2f_{\varepsilon}(D_E)u,u\rangle
+\varepsilon\,\norm{u}^2\cdot 1_A                                \\
&\leq 
\frac{s_0}{4}\,\langle f(D_E)u,u\rangle
-\frac{s_0}{4}\,\langle f_{\varepsilon}(D_E)u,u\rangle+ \varepsilon \|u\|^2\cdot 1_A  \\
&= 
\frac{s_0}{4}\,\langle g_{\varepsilon}(D_E)u,u\rangle
+\varepsilon\,\norm{u}^2\cdot 1_A\\
&\leq 
\varepsilon\,\left(\frac{s_0}{4}+1\right)\,\norm{u}^2\cdot 1_A 
\text{ ,}
\end{align*}\
which implies the required inequality in $\mbR_+$ after
applying the norm of $A$.
\end{proof} 

\section{Codimension two index obstruction to positive scalar curvature}

In \cite[Theorem 2.6]{MR2670972} Roe's partitioned manifold index theorem
\cite[Theorem 4.4]{Roe_AMS} was 
generalized to Dirac operators twisted with Hilbert $A$-module bundles. Since this version 
will be used in the proof of our main result, we will briefly restate it here. 

\begin{thm}
\label{thm:PMIT}
Let $M$ be an odd-dimensional complete spin manifold with $\dim(M)\geq
3$ and let $N\subset M$ be a closed submanifold of codimension one with
trivial normal bundle, which divides $M$ into two parts $M_0$ and $M_1$ with
common boundary $N$. 
Denote with  $D_E$ the spin Dirac
 operator twisted by the Hilbert $A$-module bundle $E \to M$.

Let 
$\varphi_N\pp K_1(C^*(M;A))\pf K_0(A)$
be the generalization {to Hilbert $A$-module bundles} of the
homomorphism defined by the partitioning hypersurface as in 
\cite[Section 4]{Roe_AMS}. Then
\[
\varphi_N(\ind(D_{M,E}))=\ind(D_{N,E_{|N}})
\]
where $\ind(D_{N,E|_N})\in K_0(A)$ is the classical Mishchenko-Fomenko index
of the Dirac operator on the compact manifold $N$ twisted by $E|_N$.
\end{thm}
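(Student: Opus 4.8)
The plan is to recover the statement from Roe's partitioned manifold index theorem \cite[Theorem 4.4]{Roe_AMS}, whose proof is formal in the coefficient algebra: it uses only the continuous functional calculus (Theorem \ref{thm:Continuous functional calculus}), the finite propagation of the wave operators $\exp(isD_E)$ (Proposition \ref{prop:wave}), the facts that $\chi(D_E)$ is a norm limit of finite-propagation operators and that $(1-\chi^2)(D_E)\in C^*(M;A)$ (Lemma \ref{lem:Functional_calculus_via_wave_group}), and the ideal structure of the coarse algebras --- all of which are available for the regular self-adjoint operator $D_E$ once Theorem \ref{thm:Essential self-adjointness of twisted Dirac operator} is in hand. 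Since \cite[Theorem 2.6]{MR2670972} already carries this out in exactly the present generality, for a complete account I would simply cite it; what follows is the structure of the argument one would reproduce, which is Roe's proof with $\complexs$ replaced by $A$.

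First I would pin down the map $\varphi_N$. The partition $M=M_0\cup M_1$ with $M_0\cap M_1=N$ is coarsely excisive, so that $C^*(M_0\subset M;A)+C^*(M_1\subset M;A)=C^*(M;A)$ while $C^*(M_0\subset M;A)\cap C^*(M_1\subset M;A)=C^*(N\subset M;A)$; the six-term sequence of this pair of ideals yields a boundary homomorphism $\partial\colon K_1(C^*(M;A))\to K_0(C^*(N\subset M;A))$, and $\varphi_N$ is $\partial$ followed by the isomorphism $K_0(C^*(N\subset M;A))\cong K_0(C^*(N;A))\cong K_0(A)$, which holds because $N$ is compact; one checks that this coincides with the Toeplitz-operator description of \cite[Section 4]{Roe_AMS}. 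Under the same identification the coarse index of $D_{N,E|_N}$ on the closed manifold $N$ is its Mishchenko--Fomenko index, so the theorem reduces to proving $\partial(\ind(D_{M,E}))=\ind(D_{N,E|_N})$ in $K_0(C^*(N\subset M;A))$.

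Next I would establish the \emph{relative index theorem} in this setting: if two complete manifolds each carry a twisted Dirac operator and are each partitioned by $N$ with trivial normal bundle, and if the twisted Dirac operators together with the underlying geometric data agree on some neighborhood of $N$, then the values of $\varphi_N$ on the two index classes coincide. Roe's proof goes through: the norm-approximation of $\chi(D_{M,E})$ by finite-propagation operators shows that the commutator of $\chi(D_{M,E})$ with the characteristic function of $M_0$ lies in $C^*(N\subset M;A)$, and because $C^*(N\subset M;A)=C^*(U_R(N)\subset M;A)$ for every $R$, an excision argument expresses $\partial(\ind(D_{M,E}))$ in terms of data localized near $N$, where the two manifolds are isometric; finite propagation then identifies the two boundary classes. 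Applying this with the cylinder $N\times\reals$ carrying the product twisted Dirac operator built from $D_{N,E|_N}$ --- which agrees with $(M,D_E)$ on a tubular neighborhood $N\times(-1,1)$ of $N$, the triviality of the normal bundle being used here --- replaces the problem by a computation on $N\times\reals$.

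Finally I would carry out that computation. On $N\times\reals$ partitioned at $N\times\{0\}$, separation of variables expresses the wave group of $D_{N\times\reals,E}$, and hence its functional calculus, in terms of that of $D_{N,E|_N}$ and the translation group on $L^2(\reals)$; compressing $\chi(D_{N\times\reals,E})$ with the characteristic function of $N\times(-\infty,0]$ then produces, modulo $C^*(N\times\{0\}\subset N\times\reals;A)$, the one-dimensional Toeplitz operator associated with $D_{N,E|_N}$, whose index in $K_0(A)$ is $\ind(D_{N,E|_N})$. As only the functional calculus of Theorem \ref{thm:Continuous functional calculus} enters this calculation, it is valid with $A$-coefficients, and together with the relative index theorem it gives $\varphi_N(\ind(D_{M,E}))=\ind(D_{N,E|_N})$. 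The main obstacle is the relative index theorem, i.e.\ the assertion that $\varphi_N$ of the index is insensitive to the geometry of $M$ away from $N$: this is the step requiring careful arguments with finite propagation and with the ideal $C^*(N\subset M;A)\subset C^*(M;A)$, but since all the needed estimates hold for regular self-adjoint operators on Hilbert $A$-modules, the argument transfers essentially verbatim from \cite[Section 4]{Roe_AMS}, which is exactly what is done in \cite[Theorem 2.6]{MR2670972}.
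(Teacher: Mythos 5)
The paper offers no proof of this theorem at all: it is explicitly presented as a restatement of \cite[Theorem 2.6]{MR2670972}, which is precisely the reference your proposal defers to, so you are taking the same approach. Your supplementary sketch (Mayer--Vietoris description of $\varphi_N$, relative index theorem via finite propagation and the ideal $C^*(N\subset M;A)$, reduction to the cylinder $N\times\reals$) is a faithful outline of how Roe's argument from \cite[Section 4]{Roe_AMS} carries over to Hilbert $A$-module coefficients and is consistent with that source.
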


Recall that on an arbitrary connected manifold $M$ with fundamental group $\pi$, we have the
  canonical flat Mishchenko line bundle 
  $\mcV(M):= \widetilde M \times_{\pi} C^*\pi \to M$, a Hilbert $C^*\pi$-bundle, { where 
  $C^* \pi$ is the reduced or maximal group $C^*$-algebra for $\pi$,
  respectively}.  {If $M$ is a closed  
  connected 
  spin manifold}, the  Mishchenko-Fomenko index
  of the Dirac operator twisted by $\mcV(M)$, denoted $\ind(D_{\mcV(M)})\in
  K_n(C^*\pi)$, is called the \emph{Rosenberg index} and often written
  $\alpha(M):= \ind(D_{\mcV(M)})$. Here $n=\dim(M)$.  If $M$ is compact, standard
  arguments (see e.g.~\cite[Section 2.1]{PiazzaSchick}) show that this can also be
  viewed as the coarse index $\ind(D_{\mcV(M)})$ applied to the
  \emph{compact} manifold $M$ in which case $C^*(M;C^*\pi)$ is canonically
  Morita equivalent to $C^*\pi$ and therefore has the same K-theory.

  By one of the many possible definitions of the Baum-Connes assembly
  map this is the image of the K-homology class of $B\pi$ represented by $[M]$
  in the Baum-Douglas picture of K-homology under the Baum-Connes assembly
  map.

The following {suspension result} is well known and essentially contained in
\cite{Rosenberg_PSC_NC_I}, compare in particular Proposition 2.9 and its proof
(and the references therein) and the proof of Theorems 2.11 and 3.1 in
\cite{Rosenberg_PSC_NC_I}. 

\begin{proposition}\label{prop:product}
  We have $C^*(\pi\times \integers)=C^*\pi\tensor
  C^*\integers$ and  $K_1(C^*\integers)\cong \integers$.  {The last isomorphism  is induced by the  generator $e = \ind(D_{\mcV(S^1)}) \in K_1(C^* \integers)$, 
  the Rosenberg index of the Dirac operator on $S^1$, where $S^1$ carries the canonical orientation 
  and  any one  of the two
  possible spin structures.}

  For an arbitrary closed spin manifold $M$ we have the product
  formula 
  \begin{multline*}
    \ind(D_{\mcV(M\times S^1)}) = \ind(D_{\mcV(M)}) \tensor e\\
 \in
    K_n(C^*\pi_1(M))\tensor K_1(C^*\integers) \subset
    K_{n+1}(C^*\pi_1(M\times S^1))
  \end{multline*}
  relating the Rosenberg indices of $M\times S^1$ and $M$, where we use the
  inclusion $K_n(A)\tensor K_1(C^*\integers)\hookrightarrow K_{n+1}(A\tensor
  C^*\integers)$ coming from the K\"unneth theorem.
\end{proposition}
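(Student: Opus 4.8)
The statement has three separate assertions: the identification $C^*(\pi\times\integers)\cong C^*\pi\tensor C^*\integers$, the computation $K_1(C^*\integers)\cong\integers$ together with its generator, and the product formula for Rosenberg indices. The plan is to reduce everything to well-documented facts about group $C^*$-algebras, the K\"unneth theorem, and the multiplicativity of the index under taking products of Dirac operators.

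First I would dispose of the identification $C^*(\pi\times\integers)\cong C^*\pi\tensor C^*\integers$. For the maximal $C^*$-algebra this is formal from the universal property applied to the algebraic tensor product $\complexs[\pi\times\integers]=\complexs[\pi]\tensor\complexs[\integers]$; for the reduced algebra one uses that $C^*_r(G\times H)\cong C^*_r(G)\tensor C^*_r(H)$ via the regular representation on $\ell^2(G)\tensor\ell^2(H)$, with the spatial tensor product — here one should note that $C^*\integers\cong C(S^1)$ is nuclear, so there is no ambiguity in the completion. Next, $K_1(C^*\integers)\cong K_1(C(S^1))\cong K^1(S^1)\cong\integers$ is classical. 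The content is that the generator is the Rosenberg index of $S^1$: I would verify this by an explicit computation, identifying $C^*(S^1;C^*\integers)$ Morita-equivalently with $C^*\integers\cong C(S^1)$, and then observing that $\ind(D_{\mcV(S^1)})$ corresponds under $K_1(C(S^1))\cong\integers$ to the Bott/index class, which is a generator — this is the one-dimensional Mishchenko--Fomenko index whose explicit form is recorded in the cited references. (One may alternatively cite that under Baum--Connes for $\integers$, which is an isomorphism, $\alpha(S^1)$ is the image of the fundamental K-homology class $[S^1]$, a generator of $K_1(S^1)$.)

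For the product formula, the key input is that the Dirac operator on $M\times S^1$ (with product metric and product spin structure) is, up to unitary equivalence, the graded tensor product $D_M\,\widehat\tensor\,D_{S^1}$ acting on $S_M\,\widehat\tensor\,S_{S^1}$, and correspondingly $D_{\mcV(M\times S^1)} = D_{\mcV(M)}\,\widehat\tensor\,D_{\mcV(S^1)}$ once one uses that $\mcV(M\times S^1)\cong\mcV(M)\,\widehat\tensor\,\mcV(S^1)$ as Hilbert $(C^*\pi_1(M)\tensor C^*\integers)$-bundles — this last isomorphism comes from $\widetilde{M\times S^1}=\widetilde M\times\reals$ and the identification of the fundamental group with $\pi_1(M)\times\integers$. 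Then the Mishchenko--Fomenko index is multiplicative under such external products: $\ind(D_M\,\widehat\tensor\,D_{S^1}) = \ind(D_M)\tensor\ind(D_{S^1})$ under the K\"unneth map $K_n(A)\tensor K_1(B)\hookrightarrow K_{n+1}(A\tensor B)$. I would cite this from \cite{Rosenberg_PSC_NC_I} (Proposition 2.9 and the proof of Theorems 2.11 and 3.1), as the statement says. Substituting $\ind(D_{\mcV(S^1)})=e$ gives the claimed formula.

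\textbf{Main obstacle.} The routine-but-fiddly part is bookkeeping with graded versus ungraded index classes and with the K\"unneth inclusion: one must be careful that the external product of the class in $K_n(C^*\pi_1(M))$ (which for $n$ even is an honest K-theory class and for $n$ odd lives in $K_1$) with the class $e\in K_1(C^*\integers)$ lands in the correct summand of $K_{n+1}(C^*\pi_1(M)\tensor C^*\integers)$, and that this matches the coarse/Mishchenko--Fomenko index of $D_{\mcV(M\times S^1)}$ on the nose rather than up to a sign or a K\"unneth extension class. Since $K_*(C^*\integers)$ is torsion-free, the K\"unneth sequence splits and the $\mathrm{Tor}$-term vanishes, so the inclusion $K_n(A)\tensor K_1(C^*\integers)\hookrightarrow K_{n+1}(A\tensor C^*\integers)$ is genuinely injective; this removes the only real subtlety, and the sign is fixed by the choice of orientation and spin structure on $S^1$ declared in the statement. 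Everything else is an invocation of the cited results, so I expect the proof to be short and largely a matter of assembling references.
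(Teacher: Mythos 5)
Your proposal is correct and follows essentially the same route as the paper, which offers no proof of its own but defers to \cite{Rosenberg_PSC_NC_I} (Proposition 2.9 and the proofs of Theorems 2.11 and 3.1) for exactly the facts you assemble: the tensor decomposition of $C^*(\pi\times\integers)$, the identification of $e$ via $C^*\integers\cong C(S^1)$, and the multiplicativity of the Mishchenko--Fomenko index under external products via the K\"unneth inclusion. Your additional remarks on nuclearity, the splitting of the K\"unneth sequence for the torsion-free $K_*(C^*\integers)$, and the graded-tensor-product form of $D_{\mcV(M\times S^1)}$ are accurate fillings-in of details the paper leaves to the references.
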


We can now state and prove the following result, which implies Theorem \ref{theo:codim2}.

\begin{thm}
\label{thm:main_result}
Let $M$ be a connected closed manifold with $\dim(M)\geq 3$ and
$W\subset M$ a connected submanifold of codimension zero with boundary $\partial W$.
Additionally, assume that the following holds:
\begin{enumerate}[(1)]
\item The boundary $\partial W$ is connected.
\item The second homotopy group of $M$ vanishes: $\pi_2(M)=0$.
\item The Hurewicz map $\mathrm{hur}_1\pp\pi_1(\partial W)\pf H_1(\partial W)$
 is injective when restricted to the kernel
 $\ker{i_*}\subset\pi_1(\partial W)$
 of the map induced by the inclusion map $i\pp\partial W\pf W$.
\item The inclusion map $j\pp W\pf M$ induces a monomorphism
$j_*\colon \pi_1(W)\pf\pi_1(M)$.  
\end{enumerate}

Then the following holds:

\begin{enumerate}[(a)]
\item Let $p\pp\overline{M}\pf M$ be the covering corresponding to 
the subgroup $j_*(\pi_1(W))$ of $\pi_1(M)$, and
$\overline{W}\subset\overline{M}$
be a lift as isometric copy of $W$ to $\overline{M}$, which exists by the
choice of this
covering. Denote by $\mathbf{D}(\overline{M},\overline{W})$
the double of the manifold $\overline{M}\setminus\mathrm{int}(\overline{W})$. This double is partitioned by $\partial\overline{W}$. There exists an extension of the Mish\-chen\-ko line bundle $\mcV({\partial\overline{W}})$
over $\partial\overline{W}$ to a flat bundle $\mcE$ over $\mathbf{D}(\overline{M},\overline{W})$.

\item If $M$ is a spin manifold and $W=N\times\mathbf{D}^2$ is a tubular
  neighborhood of a connected and closed
submanifold $N\subset M$ with $\mathrm{codim}(N)=2$ and trivial normal bundle, then (3) is automatically
satisfied and if $\alpha(N)\neq 0$ then the manifold $M$ does not admit a
metric of positive scalar curvature.
\end{enumerate}
\end{thm}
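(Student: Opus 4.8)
The plan is to first reduce part (b) to part (a), and then to establish part (a) via a concrete surgery-and-extension argument. For (b), suppose $W = N\times \mathbf{D}^2$ is a tubular neighborhood of a codimension-two submanifold $N$ with trivial normal bundle. Then $\partial W = N\times S^1$, and the inclusion $i\colon \partial W\to W$ is (up to homotopy) the projection $N\times S^1\to N$; its kernel on $\pi_1$ is generated by the $S^1$-factor, so $\ker i_*\cong\integers$ and the Hurewicz map is injective there. Hence hypothesis (3) holds automatically, and (1), (2), (4) are assumed. Now apply part (a): I pass to the covering $p\colon\overline M\to M$ corresponding to $j_*(\pi_1(W))=j_*(\pi_1(N\times\mathbf{D}^2))\cong \pi_1(N)$, lift $W$ isometrically to $\overline W\cong N\times\mathbf{D}^2$, and form the double $\mathbf{D}(\overline M,\overline W)$, which is a complete (non-compact, since $\overline M$ is non-compact when $\pi_1(N)\neq\pi_1(M)$, and when $\pi_1(N)=\pi_1(M)$ one argues directly on a cover by $\integers$ or invokes the Gromov--Lawson codimension-one/zero machinery — this case needs care) spin manifold partitioned by the separating hypersurface $\partial\overline W = N\times S^1$. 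Part (a) furnishes a flat Hilbert $C^*\pi_1(N)$-bundle $\mcE$ on $\mathbf{D}(\overline M,\overline W)$ restricting to $\mcV(N\times S^1)$ on the hypersurface. If $M$ carried a metric of positive scalar curvature, the same metric on $\overline M\setminus\mathrm{int}(\overline W)$ doubles (after a collar deformation making scalar curvature positive near the boundary) to a metric on $\mathbf{D}(\overline M,\overline W)$ with uniformly positive scalar curvature outside the compact set coming from $\overline W$; by Theorem \ref{thm:vanishing_theorem} the coarse index $\ind(D_{\mcE})\in K_*(C^*(\mathbf{D}(\overline M,\overline W);C^*\pi_1(N)))$ vanishes. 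On the other hand, the partitioned manifold index theorem, Theorem \ref{thm:PMIT}, identifies $\varphi_{\partial\overline W}(\ind(D_{\mcE}))$ with the Mishchenko--Fomenko index of the Dirac operator on $N\times S^1$ twisted by $\mcE|_{\partial\overline W}=\mcV(N\times S^1)$, i.e.\ with the Rosenberg index $\alpha(N\times S^1)$. By Proposition \ref{prop:product} this equals $\alpha(N)\tensor e$, which is nonzero in $K_{n+1}(C^*\pi_1(N\times S^1))$ whenever $\alpha(N)\neq 0$ (the map $x\mapsto x\tensor e$ is injective by the Künneth theorem and $K_1(C^*\integers)\cong\integers$ is torsion-free and generated by $e$). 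This contradicts the vanishing of $\ind(D_{\mcE})$, so no such metric exists.

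For part (a) itself, the strategy is to build $\mcE$ by killing the extra fundamental group by surgery and extending flat bundles across surgery traces. On $\partial\overline W$ the bundle $\mcV(\partial\overline W)$ is flat with structure group $\pi_1(\partial\overline W)$; because $\partial\overline W = N\times S^1$ and the $C^*\pi_1(N)$-bundle we want is the pullback of $\mcV(N)$ along the projection, $\mcV(N\times S^1)$ as a $C^*\pi_1(N\times S^1)$-bundle is induced up from a $C^*\pi_1(N)$-bundle via the quotient $\pi_1(N\times S^1)\to\pi_1(N)$. The key point is that $\mathbf{D}(\overline M,\overline W)$ deformation retracts onto $\overline M\setminus\mathrm{int}(\overline W)$ glued to its mirror image along $\partial\overline W$, and one needs a homomorphism from $\pi_1$ of this double to a group $\Gamma$ together with a compatible flat $C^*\Gamma$-bundle restricting correctly to $\partial\overline W$. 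Hypotheses (2), (3), (4) are exactly what is needed: (4) lets us work on $\overline M$ where $\overline W$ lifts; (2), $\pi_2(M)=0$, ensures (via the Hurewicz/obstruction theory and the fact that $\overline M\setminus\mathrm{int}(\overline W)$ can be reached from $\partial\overline W$ without creating $\pi_2$) that the relevant classifying-space maps extend; and (3) controls the kernel $\ker i_*$ so that the loops in $\partial\overline W$ that become trivial in $W$ are detected in homology and can be killed by surgery on circles with trivial normal bundle inside the double, across whose traces a flat bundle extends because the bundle was already trivial (as a flat bundle with the right monodromy) on those circles. Concretely I would: (i) identify $\pi_1(\mathbf{D}(\overline M,\overline W))$ via van Kampen as an amalgamated product/HNN construction over $\pi_1(\partial\overline W)$; (ii) construct a homomorphism to $\pi_1(\partial\overline W)/(\ker i_*) \cong \mathrm{im}(i_*)$ — or more precisely to the appropriate target so that $\mcV(\partial\overline W)$ is induced — using that $\ker i_*$ maps to $0$ in homology (hypothesis (3)) hence the corresponding classes can be surgered away, making the homomorphism well-defined on the nose; (iii) pull back the Mishchenko-type bundle along this homomorphism to get the flat extension $\mcE$.

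The main obstacle I anticipate is part (a): carefully constructing the flat extension $\mcE$ over the double. The subtlety is that $\pi_1$ of the double is genuinely larger than $\pi_1(\partial\overline W)$, so $\mcV(\partial\overline W)$ does not extend as-is; one must first perform surgeries (on embedded circles representing $\ker i_*$, which have trivial normal bundle and which bound in the complement by hypothesis (3) together with $\pi_2(M)=0$) to simplify the fundamental group, then show the Mishchenko-type bundle extends across each surgery trace, and finally check that the resulting bundle is still flat and restricts correctly to $\partial\overline W$ — and that performing these surgeries does not disturb the positive-scalar-curvature estimate away from the compact core (here Gromov--Lawson--Schoen--Yau surgery is available since the surgeries are in codimension $\geq 3$ in the double, using $\dim M\geq 3$). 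Keeping track of $\pi_2$ throughout — ensuring the surgeries do not reintroduce $\pi_2$ and that the extension problem for the classifying map has no obstruction — is where the hypothesis $\pi_2(M)=0$ does its real work, and is the delicate part of the argument. A secondary subtlety is the degenerate case $\pi_1(N)=\pi_1(M)$ in part (b), where $\overline M=M$ is compact and one cannot directly form a non-compact partitioned manifold; there one replaces $\overline M$ by $\overline M\times\reals$ or passes to an infinite cyclic cover of the double, reducing to the non-compact situation where Theorem \ref{thm:vanishing_theorem} applies.
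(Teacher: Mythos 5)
Your derivation of part (b) from part (a) is essentially the paper's argument: apply the partitioned manifold index theorem to the double along $\partial\overline W=N\times S^1$, identify the resulting index with $\alpha(N\times S^1)=\alpha(N)\otimes e\neq 0$ via the suspension formula, and contradict the partial vanishing theorem. Your observation that the case where $\overline M$ is compact needs separate treatment is a legitimate point (the paper disposes of it in a remark: if $[\pi_1(M):\pi_1(N)]<\infty$ then $\overline M\setminus\mathrm{int}(\overline W)$ is a compact spin bordism from $N\times S^1$ to $\emptyset$ over which the Mishchenko bundle extends, forcing $\alpha(N)=0$), and the even-dimensional case is handled by crossing with $S^1$, as you implicitly allow.

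The genuine gap is in part (a), which you yourself flag as unresolved, and your proposed surgery route would not work. First, surgering $\mathbf{D}(\overline M,\overline W)$ produces a \emph{different} manifold, whereas the statement (and its use in part (b)) requires a flat bundle on the double itself, compatible with the given partition and the given metric away from a compact set. Second, and more seriously, your step (ii) aims at a homomorphism to $\pi_1(\partial\overline W)/\ker i_*$; but $\mcV(\partial\overline W)$ is the universal flat $C^*\pi_1(\partial\overline W)$-bundle, whose holonomy is the full regular representation, so it is \emph{not} induced from that quotient. Killing $\ker i_*$ would in part (b) kill exactly the $S^1$-holonomy, the restriction to $N\times S^1$ would become the pullback of $\mcV(N)$, and the partitioned index would then be $\alpha(N)\otimes\ind(D_{S^1,\complexs})=0$ rather than $\alpha(N)\otimes e$ --- the obstruction evaporates. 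Third, you invoke hypothesis (3) as saying that $\ker i_*$ maps to $0$ in homology; it says the opposite, namely that $\mathrm{hur}_1$ is \emph{injective} on $\ker i_*$. What is actually needed, and what the paper proves, is that $k_*\colon\pi_1(\partial\overline W)\to\pi_1(\overline M\setminus\overline W)$ is a \emph{split} injection: injectivity follows because a loop in $\ker k_*$ lies in $\ker i_*$ (as $\overline j_*$ is an isomorphism) and bounds disks on both sides of $\partial\overline W$, producing a $2$-sphere which is null-homologous by $\pi_2(M)=0$, whence its Mayer--Vietoris boundary $\mathrm{hur}_1[\alpha]$ vanishes and (3) gives $[\alpha]=0$; the splitting $r$ is then assembled from $(i_*\times\mathrm{hur}_1)^{-1}$ and $H_1(k)^{-1}$ (the latter an isomorphism by the relative Hurewicz theorem and excision, again using $\pi_2(M)=0$). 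Setting $\mcE:=(Br\circ c)^*\mcV(B\pi_1(\partial\overline W))$ then extends the Mishchenko bundle with no surgery at all. Without the retraction onto the \emph{full} group $\pi_1(\partial\overline W)$, part (a) --- and hence the whole theorem --- does not follow from your sketch.
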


The condition (2) in Theorem~\ref{thm:main_result} is necessary. For example consider 
$M:=N\times S^2$ with some  $N$ which has non-trivial $\hat{A}$-genus. Then $M$ does
admit a metric with positive scalar curvature, but all the other assumptions
are satisfied by a tubular neighborhood $W$ of one copy of $N$ in $N\times
S^2$.

\begin{remark}
  If $M,N$ are as in part b) of Theorem \ref{thm:main_result}, and $\alpha(N)\ne 0$ then
  the index of $\pi_1(N)$ in $\pi_1(M)$ is necessarily infinite. Otherwise,
  passing to the finite covering $\overline{ M}$ the complement $\overline M\setminus \overline
  W$ is a compact spin bordism between $N\times S^1$ and the empty set, over which the 
  Mishchenko bundle of $N\times S^1$ extends.

  By bordism invariance of the index of the twisted Dirac operator we have 
  $\alpha(N\times S^1)=0$  and therefore also $\alpha(N)=0$, as explained in
  Proposition \ref{prop:product}.
\end{remark}

\begin{cor}
Let $N$ be a closed connected spin manifold with $\pi_2(N)=0$ and $\alpha(N)\neq 0$ in 
$ K_*( C^*\pi_1(N))$. Let $X$ be the total space of a fiber bundle
$N\hookrightarrow X\rightarrow\Sigma$
with fiber $N$ over a compact surface $\Sigma$ different from $S^2$ or
${\mbR}P^2$. 
If the spin structure on $N$ extends to a compatible spin structure on $X$, then $X$ does not 
admit a Riemannian metric with positive scalar curvature.
\end{cor}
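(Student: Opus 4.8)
The strategy is to apply Theorem~\ref{thm:main_result}(b) with $M := X$ and with $N$ realized as a single fiber $p^{-1}(b)$, $b\in\Sigma$. Since $\Sigma$ is connected all fibers are diffeomorphic to $N$, and since $p$ is locally of the form $U\times N\to U$, the normal bundle of the fiber $p^{-1}(b)$ in $X$ is the restriction of $p^*(T_b\Sigma)$, the pullback of a vector space, hence trivial. Thus a tubular neighborhood of the fiber has the form $W=N\times\mathbf{D}^2$, and $X$ is a connected closed manifold of dimension $\dim N+2\ge 3$; here we may assume $\dim N\ge 1$, the degenerate case $\dim N=0$ (so $X=\Sigma$) being covered directly by Gauss--Bonnet. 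Moreover $X$ is spin, since by hypothesis the spin structure of $N$ extends over $X$, and the spin structure this induces back on the fiber $N$ is the given one, for which $\alpha(N)\ne 0$.

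It remains to verify conditions (1)--(4) of Theorem~\ref{thm:main_result} for this $W\subset X$. Condition (1) holds because $\partial W=N\times S^1$ is connected. Condition (3) is automatic for $W=N\times\mathbf{D}^2$ by part (b) of the theorem. Conditions (2) and (4) both follow from a single observation: a closed surface other than $S^2$ and $\mathbb{R}P^2$ has contractible universal cover, hence is aspherical, so $\pi_2(\Sigma)=0$. Plugging this and $\pi_2(N)=0$ into the homotopy exact sequence of the fibration $N\hookrightarrow X\xrightarrow{p}\Sigma$,
\[
\cdots\to\pi_2(N)\to\pi_2(X)\to\pi_2(\Sigma)\to\pi_1(N)\to\pi_1(X)\to\cdots,
\]
gives $\pi_2(X)=0$, which is condition (2), and, by exactness at $\pi_1(N)$, that the fiber inclusion induces an injection $\pi_1(W)=\pi_1(N)\hookrightarrow\pi_1(X)$, which is condition (4). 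With all four conditions and $\alpha(N)\ne 0$ in hand, Theorem~\ref{thm:main_result}(b) shows that $X$ carries no metric of positive scalar curvature.

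I expect no serious obstacle here; the only points needing a moment's care are the triviality of the normal bundle of a fiber and the fact that asphericity of $\Sigma$ delivers \emph{both} $\pi_2(X)=0$ and the $\pi_1$-injectivity required by Theorem~\ref{thm:main_result}. One should also note that the conclusion concerns closed $X$, so $\Sigma$ is taken to be a closed surface; allowing $\partial\Sigma\ne\emptyset$ would keep $\Sigma$ aspherical but make $X$ a manifold with boundary, which changes the nature of the question.
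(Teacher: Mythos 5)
Your argument is correct and follows essentially the same route as the paper: realize $N$ as a fiber with trivialized tubular neighborhood, use $\pi_2(\Sigma)=0$ (asphericity of surfaces other than $S^2$, $\mathbb{R}P^2$) and the homotopy exact sequence of the fibration to obtain both $\pi_2(X)=0$ and $\pi_1$-injectivity of the fiber inclusion, and then invoke Theorem~\ref{thm:main_result}(b). Your extra remarks on the degenerate case $\dim N=0$ and on $\Sigma$ being closed are sensible refinements of the same proof, not a different approach.
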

\begin{proof}
We view $N$ as fiber over some point in $\Sigma$. Local triviality
of the bundle implies the existence of a {trivialized} tubular neighborhood
$W$ of $N$ in $X$. By assumption $\Sigma$ is neither ${\mbR}P^2$ nor $S^2$. 
Hence $\pi_2(\Sigma)=0$ and the long exact homotopy sequence
of the bundle implies that the inclusion $j\pp W\pf X$ is $\pi_1$-injective.
By the same reasoning $\pi_2(X)=0$. So (2) and (4) of
Theorem~\ref{thm:main_result} are satisfied.
\end{proof}

\begin{proof}[Proof of Theorem~\ref{thm:main_result}]

We consider the connected covering $p\pp\overline{M}\pf M$ corresponding to the
subgroup $j_*(\pi_1(W))$ of $\pi_1(M)$. The inclusion map $j\pp W\pf M$ lifts to an injection $\overline{j}\pp \overline{W}\pf\overline{M}$ which is an $\pi_1$-isomorphism, where $\overline{W}$ is homeomorphic
to $W$ via $p$.

\medskip

(a) We will show subsequently that the inclusion map $k\pp\partial\overline{W}\pf\overline{M}\setminus\overline{W}$ 
induces an injection on $\pi_1$ and that there exists a homomorphism
$r\pp\pi_1(\overline{M}\setminus\overline{W})\pf\pi_1(\partial\overline{W})$
satisfying $r\circ k_*=\id$, i.e., $k$ is a split injection. From this it
follows that $\mcE:=(Br\circ c)^*\mcV(B\pi_1( \partial\overline{W}))$
satisfies $k^*\mcE\cong\mcV(\partial\overline{W})$ if $c$ is the classifying map of the
universal covering of $\overline{M}\setminus\overline{W}$.

\medskip

Injectivity of $k_*$: Let $i\pp\partial\overline{W}\hookrightarrow
\overline{W}$ be the inclusion. Then the diagram
\begin{equation}
\begin{CD}
\label{CD:Useful_diagram_I}
\pi_1(\partial\overline{W}) @>k_*>> \pi_1(\overline{M}\setminus\overline{W}) \\
@V i_* VV  @VV m_*V \\
\pi_1(\overline{W}) @>\overline{j}_*>\cong> \pi_1(\overline{M})
\end{CD}
\end{equation}
commutes (the right vertical arrow is  given by the inclusion $m\pp
\overline{M}\setminus\overline{W}\pf \overline{M}$). Since $\overline{j}$ is a
$\pi_1$-isomorphism one has $\ker{k_*}\subset\ker{i_*}$. Therefore, if
$[\alpha]\in\ker{k_*}$ then the loop $\alpha$ is both null-homotopic as a map
to $\overline{M}\setminus\overline{W}$ and as a map to $\overline{W}$.
This allows us to construct a singular sphere $\sigma\pp S^2\pf\overline{M}$
which maps the lower and upper hemisphere $S^2_-$ and $S^2_+$, into $\overline{W}$
and $\overline{M}\setminus\overline{W}$, respectively, and whose restriction
of $\sigma$ to the equator $S^1\subset S^2$ is $\alpha$. By assumption (2) we
have $\pi_2(\overline M) =\pi_2(M)=0$ and hence $\sigma_*[S^2]=0$ in singular
homology. Therefore, by the construction
of the boundary operator $\partial$ of the Mayer-Vietoris
sequence of the triad
$(\overline{M},\overline{M}\setminus\overline{W},\overline{W})$
also $\partial(\sigma_*[S^2])=\alpha_*[S^1]=\mathrm{hur}_1[\alpha]=0$.
But this in conjunction with (3) implies $[\alpha]=0$, proving that $k_*$ is injective.

\medskip

Existence of $r$: Since $\overline{M}$ and $\overline{W}$ are path connected,
$\pi_1(\overline{W})\to \pi_1(\overline{M})$ is an isomorphism and
$\pi_2(\overline{M})=0$ by assumption (ii), the pair $(\overline M,\overline
W)$ is $2$-connected. By the relative Hurewicz theorem,
$H_*(\overline{M},\overline{W})=0$ for $j=0,1,2$.
 By excision the groups $H_1(\overline{M}\setminus\overline{W},\partial\overline{W})$ and
$H_2(\overline{M}\setminus\overline{W},\partial\overline{W})$ are also trivial. In particular,
$H_1(k)$ is an isomorphism. From~(\ref{CD:Useful_diagram_I}) we obtain the
following diagram
\begin{equation}
\begin{CD}
\label{CD:Useful_diagram_II}
\pi_1(\partial\overline W) @>k_*>>  \pi_1(\overline M\setminus\overline{W})\\
@V i_*\times\mathrm{hur}_1 VV @VV m_*\times\mathrm{hur}_1 V\\
\pi_1(\overline{W}) \times H_1(\partial\overline{W}) @>{\cong}>\overline{j}_*\times H_1(k)>  \pi_1(\overline M)\times H_1(\overline M\setminus\overline{W})
\end{CD}
\end{equation}
which commutes by the naturality of the Hurewicz homomorphism.
The lower horizontal arrow in~(\ref{CD:Useful_diagram_II}) is an isomorphism
as $\overline{j}_*$ and $H_1(k)$ are isomorphisms. Furthermore, our assumption
(3) implies the injectivity of $i_*\times\mathrm{hur}_1$.
This allows us to regard $\pi_1(\partial\overline{W})$ as subgroup of $\pi_1(\overline{M})\times H_1(\overline{M}\setminus\overline{W})$
via the injection given by the composition of this injection with the lower horizontal arrow in
~(\ref{CD:Useful_diagram_II}). The right vertical arrow then surjects onto this subgroup of $\pi_1(\overline{M})\times H_1(\overline{M}\setminus\overline{W})$. Define the map $r\pp\pi_1(\overline M\setminus\overline{W})\pf\pi_1(\partial\overline W)$ by 
\[
r:=(i_*\times\mathrm{hur}_1)^{-1}\circ (\overline{j}_*\times H_1(k))^{-1}\circ (m_*\times\mathrm{hur}_1)\text{ ,}
\]
where we restrict the  target of the injection $i_*\times\mathrm{hur}_1$ 
so that we get an invertible map.
Clearly, $r\circ k_*=\id$.

\medskip

(b) Assume first that $M$ is odd dimensional. Denote by $W$ a trivial tubular neighbourhood of $N$. Then $W$ is a zero-codimensional submanifold of $M$. The manifold $\mathbf{D}(\overline{M},\overline{W})$ admits a spin structure and is partitioned by the boundary $\partial\overline{W}\cong N\times S^1$ of $\overline{W}$. By part (a) there is a flat bundle $\mathcal{E}$ over $\mathbf{D}(\overline{M},\overline{W})$ which extends the Mish\-chen\-ko line bundle $\mcV(\partial\overline{W})$ over $\partial\overline{W}$. By Theorem~\ref{thm:PMIT} we have:
\begin{equation}
\varphi_{\partial\overline{W}}(\ind(D_{\mathbf{D}(\overline{M},\overline{W}),\mcE}))
=
\ind(D_{\partial\overline{W},\mcV(\partial\overline{W})})\in K_0(C^*\pi_1(\partial\overline{W}))\text{ .}
\end{equation}
On the other hand, using Proposition \ref{prop:product},
\begin{equation}
\begin{aligned}
\ind(D_{\partial\overline{W},\mcV(\partial\overline{W})})
&=
\alpha(\partial\overline{W}) =
\alpha(\partial W) \\
&=
\alpha(N\times S^1) =
\alpha(N)\otimes e\text{ .}
\end{aligned}
\end{equation}
Since we assume $\alpha(N)\neq 0$, by Proposition \ref{prop:product}  we can
conclude 
\begin{equation}
\label{equation:conclusion}
\ind(D_{\mathbf{D}(\overline{M},\overline{W}),\mcE})\neq 0.
\end{equation}
{We conclude the proof by contradiction as follows.} If $M$ admits a metric of positive scalar
curvature,  then  $\overline{M}$ admits a metric of uniformly positive
scalar curvature. We can use this metric (deformed in a neighborhood of
$\partial{\overline W}$ to get a smooth metric)
to obtain  a Riemannian metric with
uniformly 
positive scalar curvature outside of a compact neighbourhood of $\partial
\overline{W}$ on $\mathbf{D}(\overline{M},\overline{W})$.
But since the bundle $\mcE$ is flat, Equation \eqref{equation:conclusion} and
Theorem \ref{thm:vanishing_theorem} 
imply that $\mathbf{D}(\overline{M},\overline{W})$ has no metric with
uniformly positive scalar curvature outside
of a compact subset. Hence $M$ cannot admit a metric of positive scalar curvature. 
\medskip

Now assume that $M$ is even-dimensional. In this case we replace the pair
$(M,N)$ by $(M\times S^1,N\times S^1)$. Since $N$ has trivial normal bundle
in $M$ the normal bundle of $N\times S^1$ in $M\times S^1$ is trivial. Also
the fundamental group of the submanifold still injects into the fundamental
group of the ambient manifold. Since
\begin{equation}
\alpha(N\times S^1)\neq 0\Longleftrightarrow \alpha(N)\otimes e\neq 0 \iff
\alpha(N)\neq 0
\end{equation}
it follows from the previous paragraph that $M\times S^1$ admits no metric of positive scalar
curvature. So in particular $M$ has no such metric.
\end{proof}

\begin{remark}
  It should be possible to generalize the results of this paper in the
  following directions:
  \begin{itemize}
  \item Using real $C^*$-operators and $Cl_n$-linear versions, more refined
    invariants in the K-theory of real group $C^*$-algebras should be
    defined for which the same kind of vanishing result holds, and which
    should give rise to stronger obstructions to positive scalar
    curvature. Note that the (stable) Gromov-Lawson-Rosenberg conjecture concerns  the real Dirac operator and the corresponding Rosenberg index.
  \item Using suitable further twists, as developed systematically by Stolz,
    compare e.g.~\cite[Section 5]{RosenbergStolz} one should be able to extend
    the theory to non-spin manifolds and even non-orientable manifolds,
    provided the universal covering remains a spin manifold.
  \item The partitioned manifold index theorem underlying our approach has generalizations to
    multi-partitioned manifolds \cite{SchickZadeh}. It should be possible, at
    least in special, iterated situations, to generalize the codimension two
    obstruction of Theorem \ref{thm:main_result} to even higher
    codimensions. For example, think of the following situation: one is given
    a codimension two hypersurface $N_1$ of a manifold $M$ which itself
    contains a codimension two 
    hypersurface $H$, for example let  $H=N_1\cap N_2$ be the
    intersection of two 
    codimension two hypersurfaces. Is the Rosenberg index of $H$ an
    obstruction to positive scalar curvature of $M$ {(under an appropriate assumption on fundamental groups and the vanishing of higher homotopy groups of $M$)}?
  \end{itemize}

\end{remark}

 \bibliographystyle{plain}

\begin{bibdiv}
  \begin{biblist}

\bib{Abramowitz}{book}{
   author={Abramowitz, Milton},
   author={Stegun, Irene A.},
   title={Handbook of mathematical functions with formulas, graphs, and
   mathematical tables},
   series={National Bureau of Standards Applied Mathematics Series},
   volume={55},
   publisher={For sale by the Superintendent of Documents, U.S. Government
   Printing Office, Washington, D.C.},
   date={1964},
   pages={xiv+1046},
   review={\MR{0167642 (29 \#4914)}},
}

\bib{MR2088027}{book}{
   author={Gallot, Sylvestre},
   author={Hulin, Dominique},
   author={Lafontaine, Jacques},
   title={Riemannian geometry},
   series={Universitext},
   edition={3},
   publisher={Springer-Verlag},
   place={Berlin},
   date={2004},
   pages={xvi+322},
   isbn={3-540-20493-8},
   review={\MR{2088027 (2005e:53001)}},
   doi={10.1007/978-3-642-18855-8},
}

\bib{MR2509837}{book}{
   author={Ginoux, Nicolas},
   title={The Dirac spectrum},
   series={Lecture Notes in Mathematics},
   volume={1976},
   publisher={Springer-Verlag},
   place={Berlin},
   date={2009},
   pages={xvi+156},
   isbn={978-3-642-01569-4},
   review={\MR{2509837 (2010a:58039)}},
   doi={10.1007/978-3-642-01570-0},
}
	
\bib{MR720933}{article}{
   author={Gromov, Mikhael},
   author={Lawson, H. Blaine, Jr.},
   title={Positive scalar curvature and the Dirac operator on complete
   Riemannian manifolds},
   journal={Inst. Hautes \'Etudes Sci. Publ. Math.},
   number={58},
   date={1983},
   pages={83--196 (1984)},
   issn={0073-8301},
   review={\MR{720933 (85g:58082)}},
}    
\bib{HankeKotschickRoeSchick}{article}{
   author={Hanke, Bernhard},
   author={Kotschick, Dieter},
   author={Roe, John},
   author={Schick, Thomas},
   title={Coarse topology, enlargeability, and essentialness},
   language={English, with English and French summaries},
   journal={Ann. Sci. \'Ec. Norm. Sup\'er. (4)},
   volume={41},
   date={2008},
   number={3},
   pages={471--493},
   issn={0012-9593},
   review={\MR{2482205 (2009k:58041)}},
}

\bib{MR2259056}{article}{
   author={Hanke, Bernhard},
   author={Schick, Thomas},
   title={Enlargeability and index theory},
   journal={J. Differential Geom.},
   volume={74},
   date={2006},
   number={2},
   pages={293--320},
   issn={0022-040X},
   review={\MR{2259056 (2007g:58024)}},
}

\bib{MR2353861}{article}{
   author={Hanke, Bernhard},
   author={Schick, Thomas},
   title={Enlargeability and index theory: infinite covers},
   journal={$K$-Theory},
   volume={38},
   date={2007},
   number={1},
   pages={23--33},
   issn={0920-3036},
   review={\MR{2353861 (2008i:57030)}},
   doi={10.1007/s10977-007-9004-3},
}

\bib{MR1451755}{article}{
   author={Higson, Nigel},
   author={Pedersen, Erik Kj{\ae}r},
   author={Roe, John},
   title={$C^\ast$-algebras and controlled topology},
   journal={$K$-Theory},
   volume={11},
   date={1997},
   number={3},
   pages={209--239},
   issn={0920-3036},
   review={\MR{1451755 (98g:19009)}},
   doi={10.1023/A:1007705726771},
}
\bib{MR1817560}{book}{
   author={Higson, Nigel},
   author={Roe, John},
   title={Analytic $K$-homology},
   series={Oxford Mathematical Monographs},
   note={Oxford Science Publications},
   publisher={Oxford University Press},
   place={Oxford},
   date={2000},
   pages={xviii+405},
   isbn={0-19-851176-0},
   review={\MR{1817560 (2002c:58036)}},
}
\bib{HR4}{article}{
   author={Higson, Nigel},
   author={Roe, John},
   title={$K$-homology, assembly and rigidity theorems for relative eta
   invariants},
   journal={Pure Appl. Math. Q.},
   volume={6},
   date={2010},
   number={2, Special Issue: In honor of Michael Atiyah and Isadore
   Singer},
   pages={555--601},
   issn={1558-8599},
   review={\MR{2761858 (2011k:58030)}},
   doi={10.4310/PAMQ.2010.v6.n2.a11},
}		

\bib{MR1219916}{article}{
   author={Higson, Nigel},
   author={Roe, John},
   author={Yu, Guoliang},
   title={A coarse Mayer-Vietoris principle},
   journal={Math. Proc. Cambridge Philos. Soc.},
   volume={114},
   date={1993},
   number={1},
   pages={85--97},
   issn={0305-0041},
   review={\MR{1219916 (95c:19006)}},
   doi={10.1017/S0305004100071425},
}
\bib{MR1142484}{article}{
  author={Hilsum, Michel},
  author={Skandalis, Georges},
  title={Invariance par homotopie de la signature \`a coefficients dans un
  fibr\'e presque plat},
  language={French},
  journal={J. Reine Angew. Math.},
  volume={423},
  date={1992},
  pages={73--99},
  issn={0075-4102},
  review={\MR{1142484 (93b:46137)}},
  doi={10.1515/crll.1992.423.73},
}

\bib{MR1949157}{article}{
   author={Kucerovsky, Dan},
   title={Functional calculus and representations of $C_0(\Bbb C)$ on a
   Hilbert module},
   journal={Q. J. Math.},
   volume={53},
   date={2002},
   number={4},
   pages={467--477},
   issn={0033-5606},
   review={\MR{1949157 (2003j:46086)}},
   doi={10.1093/qjmath/53.4.467},
}

\bib{MR1325694}{book}{
   author={Lance, E. Christopher},
   title={Hilbert $C^*$-modules},
   series={London Mathematical Society Lecture Note Series},
   volume={210},
   note={A toolkit for operator algebraists},
   publisher={Cambridge University Press},
   place={Cambridge},
   date={1995},
   pages={x+130},
   isbn={0-521-47910-X},
   review={\MR{1325694 (96k:46100)}},
   doi={10.1017/CBO9780511526206},
}

\bib{MR1031992}{book}{
   author={Lawson, H. Blaine, Jr.},
   author={Michelsohn, Marie-Louise},
   title={Spin geometry},
   series={Princeton Mathematical Series},
   volume={38},
   publisher={Princeton University Press},
   place={Princeton, NJ},
   date={1989},
   pages={xii+427},
   isbn={0-691-08542-0},
   review={\MR{1031992 (91g:53001)}},
}

\bib{MR548506}{article}{
   author={Mi{\v{s}}{\v{c}}enko, Alexander S.},
   author={Fomenko, Anatoly T.},
   title={The index of elliptic operators over $C^{\ast} $-algebras},
   language={Russian},
   journal={Izv. Akad. Nauk SSSR Ser. Mat.},
   volume={43},
   date={1979},
   number={4},
   pages={831--859, 967},
   issn={0373-2436},
   review={\MR{548506 (81i:46075)}},
}
		
\bib{thesis_pape}{thesis}{
      AUTHOR = {Pape, Daniel},
     TITLE = {Index theory and positive scalar curvature},
   SCHOOL  = {Georg-August-Universit{\"a}t G{\"o}ttingen},
     YEAR  = {2011}
}

\bib{PiazzaSchick}{unpublished}{
  author={Piazza, Paolo},
  author={Schick, Thomas},
  title={Rho-classes, index theory and Stolz' positive scalar curvature
sequence}, 
  note={arXiv:1309.4370; to appear in Journal of Topology},
}

\bib{MR1147350}{article}{
   author={Roe, John},
   title={Coarse cohomology and index theory on complete Riemannian
   manifolds},
   journal={Mem. Amer. Math. Soc.},
   volume={104},
   date={1993},
   number={497},
   pages={x+90},
   issn={0065-9266},
   review={\MR{1147350 (94a:58193)}},
   doi={10.1090/memo/0497},
}

\bib{Roe_AMS}{book}{
   author={Roe, John},
   title={Index theory, coarse geometry, and topology of manifolds},
   series={CBMS Regional Conference Series in Mathematics},
   volume={90},
   publisher={Published for the Conference Board of the Mathematical
   Sciences, Washington, DC},
   date={1996},
   pages={x+100},
   isbn={0-8218-0413-8},
   review={\MR{1399087 (97h:58155)}},
}
		
\bib{MR1670907}{book}{
   author={Roe, John},
   title={Elliptic operators, topology and asymptotic methods},
   series={Pitman Research Notes in Mathematics Series},
   volume={395},
   edition={2},
   publisher={Longman},
   place={Harlow},
   date={1998},
   pages={ii+209},
   isbn={0-582-32502-1},
   review={\MR{1670907 (99m:58182)}},
}

\bib{Roe_partial_vanish}{unpublished}{
  author={Roe, John},
  title={Positive curvature, partial vanishing theorems, and coarse indices},
  date={2012},
  note={arXiv:1210.6100},
}

\bib{Rosenberg_PSC_NC_I}{article}{
   author={Rosenberg, Jonathan},
   title={$C^{\ast} $-algebras, positive scalar curvature, and the Novikov
   conjecture},
   journal={Inst. Hautes \'Etudes Sci. Publ. Math.},
   number={58},
   date={1983},
   pages={197--212 (1984)},
   issn={0073-8301},
   review={\MR{720934 (85g:58083)}},
}
		
\bib{Rosenberg_PSC_NC_II}{article}{
   author={Rosenberg, Jonathan},
   title={$C^\ast$-algebras, positive scalar curvature and the Novikov
   conjecture. II},
   conference={
      title={Geometric methods in operator algebras},
      address={Kyoto},
      date={1983},
   },
   book={
      series={Pitman Res. Notes Math. Ser.},
      volume={123},
      publisher={Longman Sci. Tech.},
      place={Harlow},
   },
   date={1986},
   pages={341--374},
   review={\MR{866507 (88f:58140)}},
}

\bib{Rosenberg_PSC_NC_III}{article}{
   author={Rosenberg, Jonathan},
   title={$C^\ast$-algebras, positive scalar curvature, and the Novikov
   conjecture. III},
   journal={Topology},
   volume={25},
   date={1986},
   number={3},
   pages={319--336},
   issn={0040-9383},
   review={\MR{842428 (88f:58141)}},
   doi={10.1016/0040-9383(86)90047-9},
}
\bib{RosenbergStolz}{article}{
   author={Rosenberg, Jonathan},
   author={Stolz, Stephan},
   title={Metrics of positive scalar curvature and connections with surgery},
   conference={
      title={Surveys on surgery theory, Vol. 2},
   },
   book={
      series={Ann. of Math. Stud.},
      volume={149},
      publisher={Princeton Univ. Press},
      place={Princeton, NJ},
   },
   date={2001},
   pages={353--386},
   review={\MR{1818778 (2002f:53054)}},
}
		
\bib{MR1235284}{article}{
   author={Rosenberg, Jonathan},
   author={Weinberger, Shmuel},
   title={Higher $G$-signatures for Lipschitz manifolds},
   journal={$K$-Theory},
   volume={7},
   date={1993},
   number={2},
   pages={101--132},
   issn={0920-3036},
   review={\MR{1235284 (94j:58165)}},
   doi={10.1007/BF00962083},
}
			
\bib{MR1632971}{article}{
   author={Schick, Thomas},
   title={A counterexample to the (unstable) Gromov-Lawson-Rosenberg
   conjecture},
   journal={Topology},
   volume={37},
   date={1998},
   number={6},
   pages={1165--1168},
   issn={0040-9383},
   review={\MR{1632971 (99j:53049)}},
   doi={10.1016/S0040-9383(97)00082-7},
}

\bib{SchickZadeh}{unpublished}{
  author={Schick, Thomas},
  author={Zadeh, Mostafa Esfahani},
  title={ Large scale index of multi-partitioned manifolds},
  note={arXiv:1308.0742},
  url={http://arxiv.org/abs/1308.0742},
}

\bib{MR535700}{article}{
   author={Schoen, Richard},
   author={Yau, Shing-Tung},
   title={On the structure of manifolds with positive scalar curvature},
   journal={Manuscripta Math.},
   volume={28},
   date={1979},
   number={1-3},
   pages={159--183},
   issn={0025-2611},
   review={\MR{535700 (80k:53064)}},
   doi={10.1007/BF01647970},
}
	 
\bib{Schroedinger}{article}{
    Author = {{Schr\"odinger}, Erwin},
    Title = {{Diracsches Elektron im Schwerefeld. I.}},
    Journal = {{Sitzungsber. Preu{\ss}. Akad. Wiss., Phys.-Math. Kl.}},
    Volume = {1932},
    Pages = {105--128},
    Year = {1932},
    Publisher = {Preu{\ss}ische Akademie der Wissenschaften, Berlin},
}

\bib{MR1937026}{article}{
   author={Stolz, Stephan},
   title={Manifolds of positive scalar curvature},
   conference={
      title={Topology of high-dimensional manifolds, No. 1, 2},
      address={Trieste},
      date={2001},
   },
   book={
      series={ICTP Lect. Notes},
      volume={9},
      publisher={Abdus Salam Int. Cent. Theoret. Phys., Trieste},
   },
   date={2002},
   pages={661--709},
   review={\MR{1937026 (2003m:53059)}},
}

\bib{MR1403963}{article}{
   author={Stolz, Stephan},
   title={Positive scalar curvature metrics---existence and classification
   questions},
   conference={
      title={ 2},
      address={Z\"urich},
      date={1994},
   },
   book={
      publisher={Birkh\"auser},
      place={Basel},
   },
   date={1995},
   pages={625--636},
   review={\MR{1403963 (98h:53063)}},
}

\bib{Vassout}{thesis}{
  AUTHOR = {Vassout, St\'{e}phane},
  TITLE = {Feuilletages et r\'{e}sidu non commutatif longitudinal},
  SCHOOL = {Universit\'{e} Pierre et Marie Curie -- Paris VI},
  YEAR = {2001},
  note = {Available at www.imj-prg.fr/$\sim$stephane.vassout/}
}

\bib{MR1435703}{article}{
   author={Yu, Guoliang},
   title={$K$-theoretic indices of Dirac type operators on complete
   manifolds and the Roe algebra},
   journal={$K$-Theory},
   volume={11},
   date={1997},
   number={1},
   pages={1--15},
   issn={0920-3036},
   review={\MR{1435703 (98e:19002)}},
   doi={10.1023/A:1007706112341},
}
		
\bib{MR2670972}{article}{
   author={Zadeh, Mostafa Esfahani},
   title={Index theory and partitioning by enlargeable hypersurfaces},
   journal={J. Noncommut. Geom.},
   volume={4},
   date={2010},
   number={3},
   pages={459--473},
   issn={1661-6952},
   review={\MR{2670972 (2011i:58034)}},
   doi={10.4171/JNCG/63},
}
\bib{MR2929034}{article}{
   author={Zadeh, Mostafa Esfahani},
   title={A note on some classical results of Gromov-Lawson},
   journal={Proc. Amer. Math. Soc.},
   volume={140},
   date={2012},
   number={10},
   pages={3663--3672},
   issn={0002-9939},
   review={\MR{2929034}},
   doi={10.1090/S0002-9939-2012-11544-8},
}
  \end{biblist}
\end{bibdiv}

\end{document}